  \crefname{theorem}{Theorem}{Theorems}
  \crefname{lemma}{Lemma}{Lemmas}
  \crefname{remark}{Remark}{Remarks}
  \crefname{proposition}{Proposition}{Propositions}
  \crefname{definition}{Definition}{Definitions}
  \crefname{corollary}{Corollary}{Corollaries}
  \crefname{section}{Section}{Sections}
  \crefname{figure}{Figure}{Figures}
\newcommand{\Z}{\mathbb{Z}}
\renewcommand{\P}{\mathbb{P}}
\newcommand{\Ind}{\mathbf{1}}
\newtheorem{theorem}[]{Theorem}
\newtheorem{proposition}[theorem]{Proposition}
\newtheorem{lemma}[theorem]{Lemma}
\newtheorem{definition}[]{Definition}
\newtheorem{citetheorem}[]{Theorem}
\newtheorem*{proposition*}{Proposition}
\theoremstyle{definition}
\newtheorem*{remark}{Remark}
\def\N{\ensuremath{\mathbb{N}}}
\def\R{\ensuremath{\mathbb{R}}}
\def\Z{\ensuremath{\mathbb{Z}}}
\def\ep{\varepsilon}
\def\F{\ensuremath{\mathcal{F}}}
\def\tif{\ensuremath{\text{ if }}}
\newcommand{\U}{\mathcal U}
\def\build#1_#2^#3{\mathrel{
\mathop{\kern 0pt#1}\limits_{#2}^{#3}}}
\newcommand*{\Op}{\mathcal{O}_p}
\newcommand*{\g}{\mathbf{g}}
\newcommand*{\tg}{\tilde{g}}
\DeclareMathOperator{\arccot}{\operatorname{arccot}}
\newcommand{\mujs}{\mu_{\mathsf{JS}}}
\newcommand{\boldX}{\boldsymbol{\mathcal{X}}\0p}
\newcommand{\Dzeta}{(\boldsymbol{\Delta\zeta})^{\downarrow}}
\newcommand{\ql}{(\mathfrak{q}, \boldsymbol{\ell})}
\title{\bf \textsc{The perimeter cascade in critical Boltzmann quadrangulations decorated by an $O(n)$ loop model}}
\author{Linxiao Chen, Nicolas Curien, Pascal Maillard}
\date{}
\newcommand{\Addresses}{{
  \bigskip\footnotesize

\noindent  \textsc{University of Helsinki, Department of Mathematics and Statistics, P.O. Finland} ~and~ \textsc{Laboratoire de Math\'ematiques d'Orsay, Univ.~Paris--Sud, CNRS, Universit\'e Paris--Saclay, 91405 Orsay, France} ~and~ \textsc{Institut de Physique Th\'eorique, Universit\'e Paris-Saclay, CEA, CNRS, F-91191 Gif-sur-Yvette.}\par\nopagebreak
  \textit{E-mail address}: \texttt{linxiao.chen@helsinki.fi}	\medskip

\noindent\textsc{Laboratoire de Math\'ematiques d'Orsay, Univ.~Paris--Sud, CNRS, Universit\'e Paris--Saclay, 91405 Orsay, France} ~and~ \textsc{Institut Universitaire de France.}\par\nopagebreak
  \textit{E-mail address}: \texttt{nicolas.curien@gmail.com}	\medskip

\noindent\textsc{Laboratoire de Math\'ematiques d'Orsay, Univ.~Paris--Sud, CNRS, Universit\'e Paris--Saclay, 91405 Orsay, France.}\par\nopagebreak
  \textit{E-mail address}: \texttt{pascal.maillard@u-psud.fr}
}}
\begin{document}
\maketitle
\begin{abstract}
We study the branching tree of the perimeters of the nested loops in critical $O(n)$  model for $n \in (0,2)$ on random quadrangulations. We prove that after renormalization it converges towards an explicit continuous multiplicative cascade whose offspring distribution $(x_i)_{i \geq 1}$ is related to the jumps of a spectrally positive $\alpha$-stable L\'evy process with $\alpha= \frac{3}{2} \pm \frac{1}{\pi} \arccos(n/2)$ and for which we have the surprisingly simple and explicit transform
$$  \exptm{ \sum_{i \geq 1}(x_i)^\theta } = \frac{\sin(\pi (2-\alpha))}{\sin (\pi (\theta - \alpha))} \quad \mbox{for }\theta \in (\alpha, \alpha+1).$$
An important ingredient in the proof is a new formula of independent interest on first moments of additive functionals of the jumps of a left-continuous random walk stopped at a hitting time.
We also identify the scaling limit of the volume of the critical $O(n)$-decorated quadrangulation using the Malthusian martingale associated to the continuous multiplicative cascade.
\end{abstract}

\vspace{-1em}
\begin{figure}[!h]
\begin{center}
\includegraphics[width=14cm,page=1]{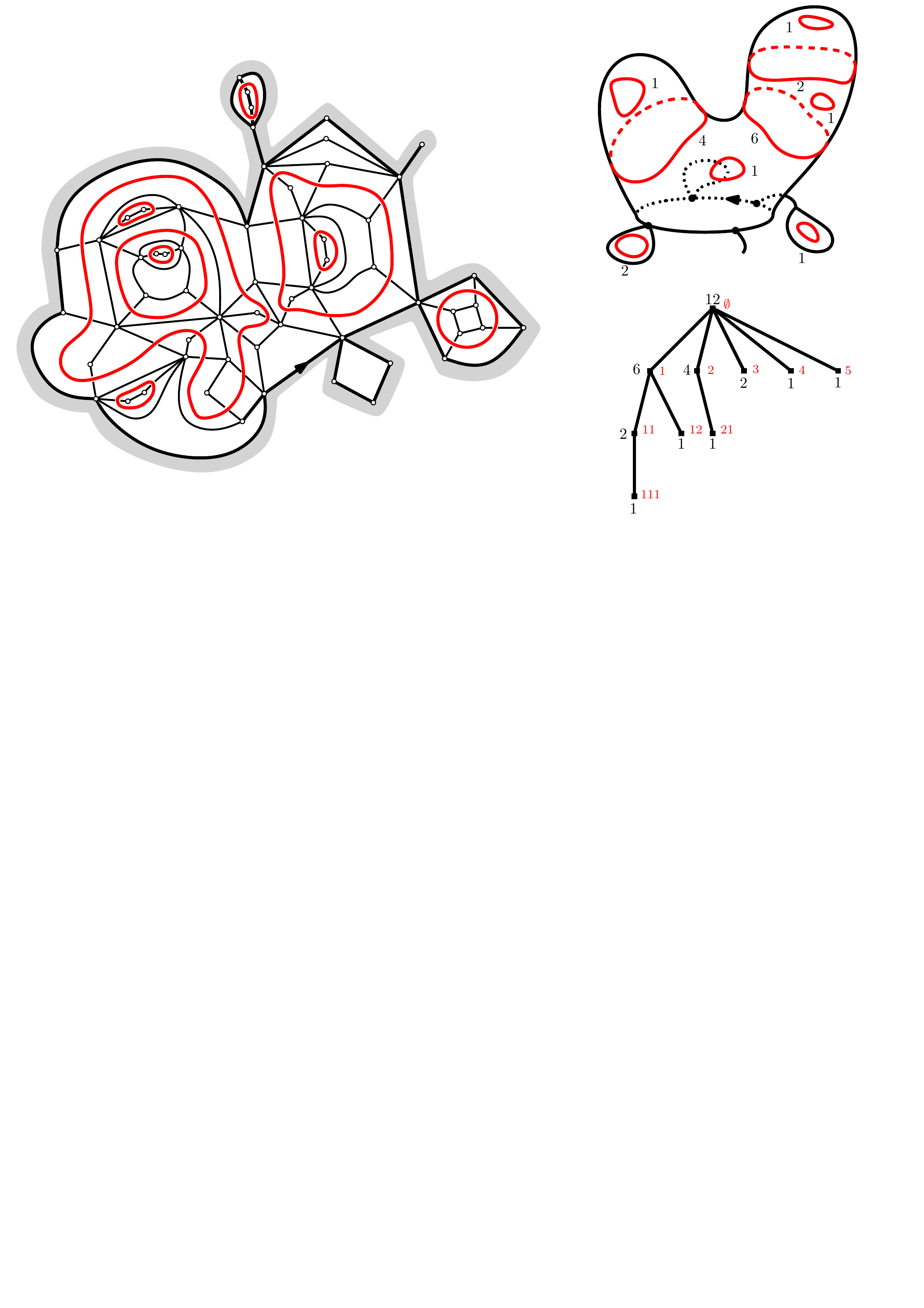}
\caption{ \label{fig:1}Left: a loop-decorated quadrangulation with a boundary of perimeter $24$. Top right: the topological representation of the quadrangulation as a ``cactus'' which highlights the nesting of the loops. Bottom right: the nesting tree labeled by the half-perimeters of the loops.}
\end{center}
\end{figure}
 
\clearpage

\section{Introduction}
We build on the work by  Borot, Bouttier \& Guitter \cite{BBG11} on critical Boltzmann quadrangulations decorated by an $O(n)$ loop model. Among other things, they showed that the so-called gasket associated to a critical $O(n)$-decorated random Boltzmann quadrangulation is for a certain choice of parameters a ``non-generic critical'' Boltzmann map, in the sense that the face weights have a polynomial decay $k^{-a}$ with $a=2 \pm \frac{1}{\pi} \arccos(n/2)$. In this work we analyze in detail the nested sequence of the perimeters of the loops  and show that it converges towards an explicit multiplicative cascade related to a stable L\'evy process with index $\alpha = a - \frac{1}{2}$. To properly state our results, let us first recall the setup of \cite{BBG11}.
\paragraph{Definition of the loop model.} In the terminology of \cite{BBG11} we work with the rigid $O(n)$ loop model on quadrangulations. Recall that in a rooted planar map $ \mathfrak{m}$, the face $ f_{ \mathrm{r}}$ to the right of the root edge is called the \emph{root face} or the  \emph{external face} (the other faces being internal faces). Its degree is called the \emph{perimeter} of the map. A \emph{quadrangulation with a boundary} is a rooted planar map $\mathfrak{q}$ whose internal faces all have degree four. A  \emph{(rigid) loop configuration} on a quadrangulation with a boundary is a set $\boldsymbol{\ell} = \{\ell_{1}, \ell_{2}, ...\}$ of disjoint undirected simple closed paths in the dual map which do not visit the external face, and with the additional constraint that when a loop visits a face of $ \mathfrak{q}$ it must cross it through opposite edges. In other words, the internal faces of $ \mathfrak{q}$ can only be of the following two types $$ \includegraphics[height=1cm]{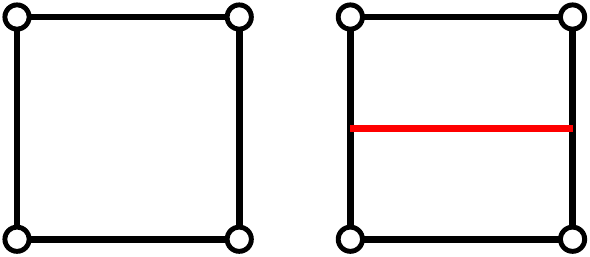},$$ see Fig.\ \ref{fig:1}. The pair $(\mathfrak{q},\boldsymbol{\ell})$ will henceforth be called a \emph{loop-decorated quandrangulation with a boundary}.

Given $n \in (0,2)$, $g \geq 0$ and $h \geq 0$, we define a measure $ \mathsf{w}$ on the set of all loop-decorated quadrangulations with a boundary by putting
\begin{eqnarray} \label{eq:weights}  \mathsf{w}_{(n;g,h)}(\ql) = g^{|\mathfrak{q}|-|\boldsymbol{\ell}|} h^{| \boldsymbol{\ell}|} n^{\# \boldsymbol{\ell}}  \end{eqnarray} where $| \mathfrak{q}|$ is the number of inner faces of $ \mathfrak{q}$,  $| \boldsymbol{\ell}|$ is the total length of the loops of $  \boldsymbol{\ell}$ and $\# \boldsymbol{\ell}$ is the number of loops in $ \boldsymbol{\ell}$. For example, the weight of the quadrangulation presented in Fig.\,\ref{fig:1} is $g^8h^{38}n^9$. We denote by $ \Op$ the set of all loop-decorated quadrangulations with a boundary whose external face has degree $2p$ and put 
 \begin{eqnarray}
\label{eq:defFp}	F_p(n;g,h) = \sum_{\ql\in\Op}  \mathsf{w}_{(n;g,h)}(\ql).
 \end{eqnarray}

If $F_p(n;g,h)$ is finite (it is not hard to see that the finiteness does not depend on the value of $p \geq 1$) the set of parameters $(n;g,h)$ is said to be admissible and we can define the normalized probability distribution on  $\Op$:
$$
	\prob\0p_{n;g,h}(\cdot) = \frac{ \mathsf{w}_{(n;g,h)}(\cdot)}{F_p(n;g,h)}.
$$
For large $p$, the geometry of a random map distributed according to $\proba\0p_{n;g,h}$ depends heavily on the parameters $(n;g,h)$. Borot, Bouttier \& Guitter \cite{BBG11} have classified this set of parameters into three categories called \emph{subcritical}, \emph{generic critical} or \emph{non-generic critical}, see Fig. \ref{fig:phase diagram}.

\begin{figure}
\begin{center}
\includegraphics[scale=0.9,page=1]{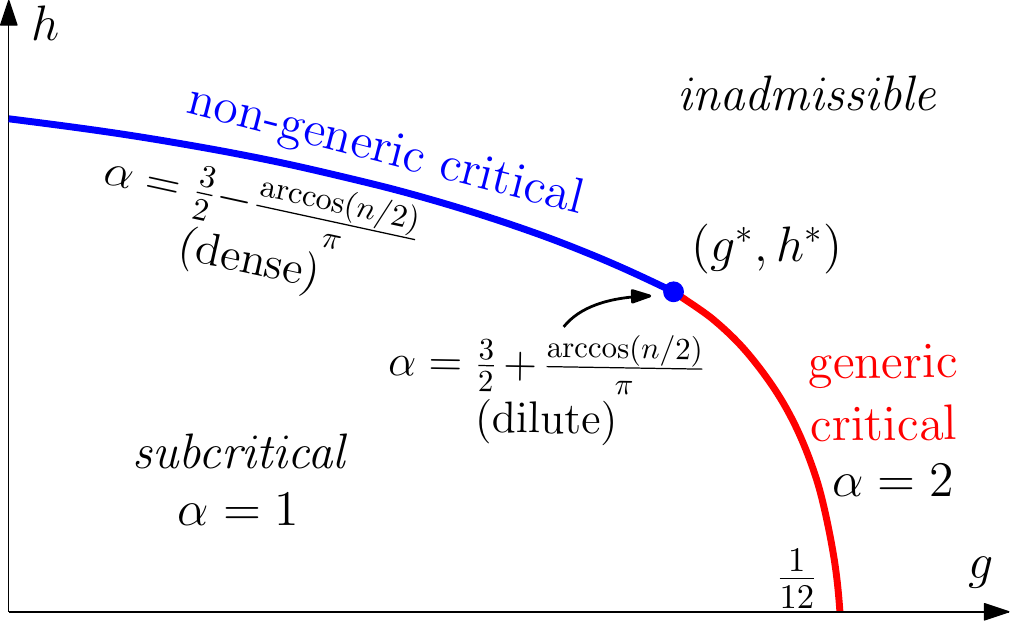}
\end{center}
\vspace{-1em}
\caption{Phase diagram of the loop-decorated quadrangulation model \cite{BBG11,BudOn} for a given value of $n \in (0,2)$ (the diagram is qualitatively the same for all such $n$).
For every admissible set of parameters $(n;g,h)$, the asymptotic form \eqref{eq:F_p asymptotic} holds. 
The different phases are characterized by different values of the exponent $\alpha$.
The critical line separates the sub-critical region (below) and the inadmissible region (above). 
}
\label{fig:phase diagram}
\end{figure}

In the subcritical case, roughly speaking the random maps distributed according to $\proba\0p_{n;g,h}$ are tree-like for large $p$ and are expected to converge in the scaling limit towards Aldous' CRT.
In the generic critical case, they are believed to behave as standard quadrangulations with a boundary and should converge towards the Brownian disk \cite{BM15}.
In the non-generic critical case however, the geometry of these maps remains elusive and the only available information we have is on their gasket \cite{LGM09}, see Section \ref{sec:gasket} for the definition.
 In particular, in this regime we have the asymptotic
\begin{equation} \label{eq:F_p asymptotic}
F_p(n;g,h) \underset{p\to\infty}{\sim} C \,\kappa^p \,p^{-\alpha -1/2},
\vspace{-1ex}
\end{equation}
for some $C>0, \kappa >0$ and where the exponent $\alpha$ satisfies 
\begin{equation}
\alpha =  \frac{3}{2} \pm \frac{1}{\pi} \arccos(n/2) \quad \in (1,2) \backslash \{ 3/2\}.
\label{eq:alpha_n}
\end{equation}
Here, the sign depends on the parameters $g$ and $h$, see Fig.~\ref{fig:phase diagram}.
The case $\alpha = \frac32-\frac1\pi \arccos(n/2) \in (1,\frac32)$  is called the dense case because in a suitable scaling limit, the loops are believed to touch themselves and each other, whereas in the dilute case  $\alpha = \frac32+ \frac1\pi \arccos(n/2) \in (\frac32,2)$ they are believed to be simple and not to touch each other.

To be precise, the work \cite{BBG11} must be completed by \cite[Appendix]{BudOn} in order to have a fully rigorous proof of the above phase diagram and in particular of the existence of non-generic critical parameter $(n;g,h)$. 

\begin{center}
\begin{minipage}{14cm}
\textit{In the rest of the paper we assume we are given a non-generic critical set of parameters $(n;g,h)$ with $n \in (0,2)$ and $h ,g \geq 0$  in the sense of Definition \ref{def:non-generic} below---in particular  \eqref{eq:F_p asymptotic} holds for a value of $\alpha \in  (1,3/2) \cup (3/2,2)$ that is fixed in the rest of the paper. We shall sometimes drop the implicit dependence in $(n;g,h)$ in what follows.}
\end{minipage}
\end{center}

\paragraph{Discrete and continuous cascades.}  We are interested in the perimeters and the nesting structure of the loops in a random loop-decorated quadrangulation distributed according to $ \prob\0p_{(n;g,h)}$ as $p\to\infty$. If $\ql$ is a loop-decorated quadrangulation with a boundary of perimeter $2p$, we can associate with it a random labeled tree as follows. We start with the so-called Ulam tree
$$\U = \bigcup_{n\ge 0} (\natural^*)^n.$$
Here and throughout we use French notation, i.e.~$\N = \{0,1,2,\ldots\}$ and $\N^* = \N\backslash\{0\}$ and set $(\natural^*)^0 = \{\varnothing\}$. If $u,v \in \U$ we write $uv$ for the concatenation of $u$ and $v$, and we write $\abs{u}=k$ if $u$ is s vertex in the $k$-th generation, i.e.\ $u\in(\natural^*)^k$.
Then we assign each loop $ \ell \in \boldsymbol{\ell}$ to a vertex of $ \U$ in the following fashion: First, the root vertex $\varnothing$ of $ \U$ is associated with an imaginary loop of length $2p$ surrounding the boundary of $  \mathfrak{q}$. Next we assign to the children $1,2,3,\cdots$ of $ \varnothing$ the outer-most loops of $\ql$---i.e.\,those loops which can be reached from the boundary of $ \mathfrak{q}$ without crossing any other loop---ranked by decreasing perimeter (if there is a tie we break it using a deterministic rule). We then continue genealogically inside each of these loops in the most obvious way, see Fig.\,\ref{fig:1}.
Although $\U$ is an infinite (even non locally finite) tree, the set of vertices attached to a loop of $ \boldsymbol{\ell}$ is a finite subtree of $ \U$. Once this is done, we define the labeling $$ \chi : u \in \U  \mapsto \chi(u) \in \natural$$ which is the half-perimeter of the loop associated to $u$ in $\ql$ or $0$ if there are no such loop. 

We now introduce the limiting continuous multiplicative cascade. Given a distribution $ \nu$ on $ (\real_+)^{\natural^*}$, let $\{(\xi_i\0u)_{i \geq 1} : u \in \U\}$ be an i.i.d.~family of (infinite) random vectors of law $\nu$. The multiplicative cascade with offspring distribution $\nu$ is then the random process $(Z(u))_{u\in\U}$ indexed by the Ulam tree such that $Z( \varnothing) =1$ and such that for any $u \in \U$ and any $i \in \natural^*$ we have $Z(ui) = Z(u) \cdot \xi_i ^{(u)}$. We will apply this to a particular law $\nu$: 
Let $(\zeta_t)_{t\ge0}$ be an $\alpha$-stable L\'evy process with no negative jumps started at 0; in other words for some constant $C>0$ we have $ \expt [\exp( -\lambda \zeta_t)] = \exp(Ct \lambda^\alpha)$ for all $ \lambda >0$. Let $\tau$ denote the hitting time of $-1$ of this process. Notice that $\tau<\infty$ a.s.\ because $\zeta$ does not drift to infinity, and we have $\zeta_\tau = -1$ since $\zeta$ has no negative jumps. We write $\Dzeta_\tau$ for the infinite vector consisting of the sizes of the jumps of $\zeta$ before time $\tau$, ranked in decreasing order. Then we define a probability distribution $\nu_\alpha$ on $ (\real_+)^{\natural^*}$ by
$$	\int \dd\nu_\alpha( \mathbf{x}) F( \mathbf{x}) =  \frac{\exptm{\frac1\tau F(\Dzeta_\tau)}}{\exptm{\frac1\tau}}.
$$
By the scaling property of stable L\'evy processes, the above definition of $\nu_\alpha$ does not depend on the constant $C>0$ appearing in the normalization of $\zeta$. 

We also define the function $\phi_\alpha(\theta) := \exptm{ \sum_{ i \geq 1} (Z_\alpha(i))^\theta }$, which we call the \emph{Biggins transform} of the multiplicative cascade $Z_\alpha$ after the seminal work of Biggins \cite{Big77}. We can now state our main result:

\begin{theorem}[Convergence of the perimeter cascade]  \label{th:main}
Let $\chi\0p$ be the random labeling obtained on the Ulam tree when the underlying loop-decorated quadrangulation is distributed according to $ \prob\0p_{(n;g,h)}$. Then we have the following convergence in distribution
\begin{eqnarray*}  
\frac1p	\mB({ \chi\0p(u)  }_{u\in\U} & \xrightarrow[p\to\infty]{(d)} & 		
		\mB({ Z_\alpha(u) }_{u\in\U}
\end{eqnarray*}
in $\ell^\infty(\U)$, where $Z_\alpha$ is the multiplicative cascade with offspring distribution $\nu_\alpha$. In addition, the Biggins transform of the multiplicative cascade $Z_\alpha$ is explicit and equals 
$$  \phi_\alpha(\theta)=  \frac{\sin(\pi (2-\alpha))}{\sin (\pi (\theta-\alpha))} 
\quad  \mbox{for } \theta \in (\alpha, \alpha+1) \quad \mbox{ and } \quad \phi_\alpha(\theta)=\infty \mbox{ otherwise}.$$
\end{theorem}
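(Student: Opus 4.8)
The plan is to establish the two assertions by different routes: the convergence of the rescaled label tree is a scaling-limit statement about the random maps, whereas $\phi_\alpha$ is by construction a quantity attached solely to the $\alpha$-stable process and is computed by pure fluctuation theory. For the cascade convergence the backbone is the \emph{spatial Markov property} of the rigid $O(n)$ model: conditionally on the outermost loops of a $\prob\0p_{(n;g,h)}$-sample together with the gasket they delimit, the loop-decorated quadrangulations filling in those loops are independent, the one enclosed by a loop of half-perimeter $k$ being distributed as under $\prob^{(k)}_{(n;g,h)}$. Iterating this yields the \emph{Markov branching} property of $(\chi\0p(u))_{u\in\U}$: given the labels up to generation $j$, the label subtrees below generation $j$ are independent, the one rooted at $u$ being a fresh copy of $(\chi^{(k)}(v))_{v\in\U}$ with $k=\chi\0p(u)$. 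Hence everything is governed by the single \emph{offspring law} $\mathsf{q}_p := \mathrm{law}\big((\chi\0p(i))_{i\ge1}\big)$ --- the ranked half-perimeters of the outermost loops --- and the key input is the \emph{one-step scaling limit} $\tfrac1p\,\mathsf{q}_p\Rightarrow\nu_\alpha$.

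To obtain the one-step limit, recall that the outermost loops correspond to certain (non-quadrangular) faces of the gasket, the inner half-perimeter of a loop being read off from the degree of the corresponding face, and that by \cite{BBG11} (completed by \cite{BudOn}) the gasket carries a non-generic critical Boltzmann structure with boundary $2p$. Exploring the loop nesting --- equivalently, peeling the gasket under the law induced by $\prob\0p$, which is an explicit reweighting of the Boltzmann measure by the partition functions $F_k$ at each hole --- is encoded by a \emph{left-continuous random walk} $S\0p$ started from $p$ (left-continuous because every downward step equals $-1$), run until the first hitting time $T_p$ of $0$, the ``loop-revealing'' steps of $S\0p$ recording the half-perimeters of the outermost loops. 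A Donsker-type invariance principle for this heavy-tailed downward-skip-free walk (the relevant index being $\alpha$, in accordance with $F_p\sim C\kappa^p p^{-\alpha-1/2}$) gives $\big(p^{-1}S\0p_{\lfloor p^\alpha t\rfloor}\big)_{t\ge0}\Rightarrow(1+\zeta_t)_{t\ge0}$ and $p^{-\alpha}T_p\Rightarrow\tau$ jointly, whence the rescaled large positive jumps converge to $\Dzeta_\tau$; the $\tfrac1\tau$-tilt in $\nu_\alpha$ is the limiting shape of the $F_k$-reweighting, of the flavour met in self-similar fragmentations. Given the one-step limit and Markov branching, $\tfrac1p(\chi\0p(u))_{u\in\U}$ is an ``approximate multiplicative cascade'', and I would upgrade this to convergence in $\ell^\infty(\U)$ by combining (a) finite-dimensional convergence on every finite subtree, by induction on the generation using the one-step limit and the continuous mapping theorem, with (b) a uniform-in-$p$ tail estimate showing that only finitely many labels exceed any given $\varepsilon$ over the whole (non locally finite) Ulam tree, so that truncation to a finite subtree is harmless. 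Part (b) reduces to bounding $\sum_{u\in\U}\exptm{(\chi\0p(u)/p)^\theta}$ uniformly in $p$ for some $\theta$ with $\phi_\alpha(\theta)<1$, which rests on a first-moment computation for $\mathsf{q}_p$; the asymptotics $F_p\sim C\kappa^p p^{-\alpha-1/2}$ furnish the uniform integrability needed throughout, and the limit process lies in $\ell^\infty(\U)$ precisely because $\phi_\alpha(\theta)<1$ for $\theta$ near $\alpha+\tfrac12$ --- so this step relies on the explicit formula derived next.

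For the Biggins transform, $\phi_\alpha(\theta)=\exptm{\sum_{i\ge1}Z_\alpha(i)^\theta}$ unwinds directly from the definition of $\nu_\alpha$ to
\[
\phi_\alpha(\theta)\;=\;\int\sum_{i\ge1}x_i^\theta\,\dd\nu_\alpha(\mathbf{x})\;=\;\frac{\exptm{\tfrac1\tau\sum_{s<\tau}(\Delta\zeta_s)^\theta}}{\exptm{\tfrac1\tau}}\,.
\]
Writing $\tfrac1\tau=\int_0^\infty e^{-q\tau}\,\dd q$ and exchanging expectation with the integral, the numerator becomes $\int_0^\infty\exptm{e^{-q\tau}\sum_{s<\tau}(\Delta\zeta_s)^\theta}\,\dd q$. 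For fixed $q$ I would pass to the spectrally negative dual $X=-\zeta$ (so that $\tau$ is the first passage of $X$ above $1$, which it reaches continuously) and combine the compensation formula with the exponential martingale $\exp(\Phi(q)X_t-qt)$, where $\Phi$ is the inverse of the Laplace exponent $\psi(\beta)=C\beta^\alpha$. The $q$-integral then collapses to a product of two Gamma functions: up to an explicit constant (independent of $C$, as it must be) $\phi_\alpha(\theta)\exptm{1/\tau}$ is proportional to $\Gamma(\theta-\alpha)\,\Gamma(1+\alpha-\theta)$, the two factors being simultaneously finite exactly for $\theta\in(\alpha,\alpha+1)$. Keeping track of the constant (which involves $\Gamma(-\alpha)$ and $\Gamma(\alpha+1)$) and applying Euler's reflection formula $\Gamma(z)\Gamma(1-z)=\pi/\sin(\pi z)$ twice turns the ratio into $-\sin(\pi\alpha)/\sin(\pi(\theta-\alpha))=\sin(\pi(2-\alpha))/\sin(\pi(\theta-\alpha))$, while outside $(\alpha,\alpha+1)$ the defining integral diverges, giving $\phi_\alpha=+\infty$. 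This computation is exactly an instance of the announced moment formula for additive functionals of the jumps of a left-continuous (here, spectrally positive) process stopped at a hitting time, specialised to $f(x)=x^\theta$; on the discrete side, the same formula applied to the peeling walk $S\0p$ is what feeds the uniform bounds of Part (b).

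The main obstacle is not the $\sin$-formula --- once the setup is in place it is a short computation in stable fluctuation theory --- but the first assertion: identifying the exact one-step offspring law $\mathsf{q}_p$ and its scaling limit, most delicately the origin and precise form of the $\tfrac1\tau$-tilt, via the gasket/peeling encoding and the new moment formula; and then promoting finite-dimensional convergence of the rescaled label tree to convergence in $\ell^\infty(\U)$, which demands quantitative, uniform-in-$p$ control of the labels across the entire Ulam tree.
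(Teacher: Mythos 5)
Your architecture---gasket decomposition giving the Markov branching structure, one-step scaling limit of the offspring law, iteration for finite-dimensional convergence, and a tail estimate for $\ell^\infty(\U)$---matches the paper's, and your sketch of the Biggins transform via $1/\tau=\int_0^\infty e^{-q\tau}\,dq$, exponential change of measure and the compensation formula is a valid alternative to the paper's route through the discrete Kemperman-type identity (Theorem~\ref{th:rw}) and its continuous analogue (Proposition~\ref{prop:levy}). Two points, however, are off. The minor one: the $1/\tau$-tilt in $\nu_\alpha$ does not come from any ``$F_k$-reweighting''. The partition functions $F_k$ are already absorbed into the gasket's Boltzmann weights $g_k=g\delta_{k,2}+nh^{2k}F_k$, where they shape the index-$\alpha$ offspring law of the coding Galton--Watson forest; they do not produce a tilt. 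The tilt arises from \emph{un-pointing}: the Bouttier--Di~Francesco--Guitter bijection codes pointed maps, so one must afterwards bias the pointed gasket by $1/\#\textsf{Vertices}$, which becomes $1/L_p$ in the tree coding and thus $1/\tau$ in the limit.

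The more serious problem is that your step (b) does not close as stated. You propose bounding $\sum_{u\in\U}\E\big[(\chi\0p(u)/p)^\theta\big]$ uniformly in $p$ by choosing $\theta$ with $\phi_\alpha(\theta)<1$. But the relevant one-step moment $c(q):=\E\big[\sum_{i\ge1}(\chi^{(q)}(i)/q)^\theta\big]$ converges to $\phi_\alpha(\theta)<1$ only as $q\to\infty$; for small $q$ one can only assert $c(q)\le C$ with possibly $C>1$ (this is exactly \eqref{eq:sous-marin induction init}). Once a branch's label drops below a fixed threshold $p_0$, the geometric contraction is lost, so the generation-by-generation bound is $C^{m}c^{k-m}$ where $m$ counts such drops along the branch---and this is \emph{not} uniformly summable over $k$ when one also lets $m$ grow. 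The paper's Lemma~\ref{lem:sous-marin 2 bounds} therefore splits into two regimes: branches with few drops are handled by the moment bound, while branches with many drops are controlled by the supermartingale $\big(\sum_{|u|=k}\overline{V}(\chi\0p(u))\big)_{k\ge0}$ (a consequence of the gasket having positive expected volume) together with Budd's regular-variation estimate $\overline{V}(p)\sim\Lambda p^{\theta_\alpha}$ (Theorem~\ref{th:budd}). Your proposal contains nothing playing the role of this second ingredient, so the tail estimate over generations beyond a finite $k$ cannot be established from the moment computation alone.
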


\begin{remark}
Here $\ell^\infty(\U)$ is defined as usual as the set of bounded functions on the countable set $\U$, endowed with the supremum norm.
The above convergence is much stronger than the convergence of \emph{finite dimensional marginals}, i.e.\ the weak convergence under the product topology of $\real^\U$. Roughly speaking, $\ell^\infty(\U)$ convergence implies that that there are no microscopic loops at some generation which contain macroscopic loops at a next generation. This is needed in particular to ensure that the convergence is preserved under relabelling of the loops. For example, if we consider the auxiliary process $(\widetilde \chi\0p(n,i))_{n\in\N,\,i\in\N^*}$, where $\widetilde \chi\0p(n,i)$ is the half-perimeter of the $i$-th largest loop at the $n$-th generation, then the finite dimensional convergence of $(p^{-1}\chi\0p(u))_{u\in\U}$ would not be enough to imply finite dimensional convergence of $(p^{-1}\widetilde \chi\0p(n,i))_{n\in\N,\,i\in\N^*}$, but $\ell^\infty(\U)$ convergence  does imply it (and furthermore implies $\ell^\infty(\U)$ convergence of $(p^{-1}\widetilde \chi\0p(n,i))_{n\in\N,\,i\in\N^*}$).
\end{remark}

\paragraph{Properties of the multiplicative cascade $Z_\alpha$.}
In Section~\ref{sec:properties} we establish some interesting properties of the multiplicative cascade $Z_\alpha$. First, in Section~\ref{sec:biggins_transform}, we define the family of additive martingales, an important observable of the multiplicative cascade. We also calculate the rate function of the multiplicative cascade, i.e.~the Legendre--Fenchel transform of $\log \phi_\alpha$. In Section~\ref{sec:volume}, we study the \emph{Malthusian martingale}
$$ \sum_{|u|=n} \left(Z_{\alpha}(u)\right)^{\min(2,2\alpha-1)}.$$
We show that it is uniformly integrable and identify the law of its limit to be equal to (in the dilute phase $\alpha>3/2$) or related to (in the dense phase $\alpha < 3/2$) an inverse-Gamma distribution with explicit parameters. As explained there, one can prove that this distribution is the scaling limit of the volume of a critical $O(n)$-decorated map with a boundary, assuming that the family of renormalized volumes is uniformly integrable.
Finally, in Section~\ref{sec:Lp}, we establish $L^p$-convergence of the additive martingales, for suitable $p$. This ensures that the multiplicative cascade $Z_\alpha$ displays no pathological behavior.

We now outline the proof of Theorem \ref{th:main}, which comes in three parts:

%
%

\paragraph{Convergence of finite dimensional marginals.}
It is proved in \cite{BBG11} that a loop-decorated quadrangulation $\ql$ distributed according to $\prob\0p_{(n;g,h)}$ can be split into its gasket---the part of $\mathfrak{q}$ outside the outer-most loops of $\boldsymbol\ell$---and a number of smaller loop-decorated quadrangulations which, conditionally on the gasket, are independent and follow the same type of distribution as $\ql$.
This settles the Markovian branching structure of the perimeter process $\chi\0p$, thus reducing the problem of convergence of its finite dimensional marginals essentially to the convergence of its first generation (Proposition \ref{prop:1ere gen}).

It is also shown in \cite{BBG11} that the gasket is a bipartite Boltzmann map with a boundary where each face of degree $2k$ receives a weight $g_k$ (which is a simple function of $F_k(n;g,h)$, see \eqref{eq:gkdugassket}).
This random map model (more precisely, the pointed version of it, see below) has been introduced in \cite{MM07} and studied under these hypotheses in \cite{LGM09}: Applying (a variant of) the classical Bouttier--Di Francesco--Guitter bijection \cite{BDFG04}, the (pointed) gasket is coded by a two-type Galton-Watson forest.
The latter can be further simplified by applying a bijection of Janson \& Stef\'ansson  \cite{JS12} that transforms it into a one-type Galton--Watson forest, which under our assumption consists of $p$ i.i.d.\ Galton-Watson trees with a critical offspring distribution in the domain of attraction of the spectrally positive $\alpha$-stable distribution.
In this coding, the loops of the first generation are transformed into the (large) faces of the gasket and then into the (large) jumps of the \L{}ukasiewicz path encoding the one-type Galton-Watson forest.
The latter naturally converges to the jumps of an $\alpha$-stable L\'evy process, which explains the appearance of the process $\Dzeta_\tau$ in the definition of $\nu_\alpha$.
The above chain of transformations is summarized in a diagram at the beginning of Section \ref{sec:firstgene}.

One technical issue in this program is that the Bouttier--Di Francesco--Guitter bijection works particularly well with \emph{pointed} maps, i.e.\ maps with a distinguished vertex.
For this reason we start by applying the bijections to the pointed gasket, and only remove the distinguished point afterwards.
This amounts to biasing the pointed gasket by the inverse of its number of vertices, which in the continuous setting give rise to the bias $\tau^{-1}$ in the definition of the measure $\nu_\alpha$.

\paragraph{A formula for left-continuous random walks.} As we saw in Theorem \ref{th:main}, the multiplicative cascade $Z_\alpha$ has an explicit and rather simple Biggins transform. This formula is obtained through a new simple identity about the (biased) first moment of some additive functionals of left-continuous random walks. This identity may have further applications and so we present it here:
Let $S$ be a left-continuous random walk on $\integer$ started from 0, i.e.\ $S_n = X_1+\cdots+X_n$, where $(X_i)_{i\ge 1}$ are i.i.d.\ random variables on $\{-1,0,1,\cdots\}$.
We denote by $T_p$ the hitting time of $-p\in\integer$ by $S$. 

\begin{theorem}	\label{th:rw} Suppose that $S$ does not drift to $\infty$ i.e. that $T_{1}< \infty$ almost surely. Then for any  positive measurable function $f:\integer\to\real$ and any $p \ge 2$ we have
$$\exptm{\frac {1}{T_p-1} \sum_{i=1}^{T_p} f(X_i)} = \exptm{f(X_1)\frac p {p+X_1}}.	$$
\end{theorem}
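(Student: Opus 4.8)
The plan is to route the whole computation through one classical combinatorial device, the \emph{cycle lemma} of Dvoretzky and Motzkin, which simultaneously produces the prefactor $p$ on the right-hand side and, in its incarnation as Kemperman's hitting-time formula, handles the summation over the excursion length. As a preliminary reduction, note that $T_p<\infty$ almost surely: $T_p$ is a sum of $p$ i.i.d.\ copies of $T_1$ by the strong Markov property and left-continuity, and each copy is finite by hypothesis; moreover $T_p\ge p$, since a left-continuous walk cannot skip over $-1,\dots,-(p-1)$. As $f\ge 0$, monotone convergence gives
\[
\exptm{\frac{1}{T_p-1}\sum_{i=1}^{T_p}f(X_i)}=\sum_{n\ge p}\frac{1}{n-1}\,\exptm{\Ind_{\{T_p=n\}}\sum_{i=1}^{n}f(X_i)},
\]
and for $p\ge 2$ every denominator $n-1\ge p-1\ge 1$ is positive.

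The first genuine step is a rotation identity: for every $n\ge p$ and every \emph{symmetric} nonnegative measurable $g\colon\integer^{n}\to\real$,
\[
\exptm{\Ind_{\{T_p=n\}}\,g(X_1,\dots,X_n)}=\frac{p}{n}\,\exptm{\Ind_{\{S_n=-p\}}\,g(X_1,\dots,X_n)}.
\]
To establish it I would write $\Ind_{\{T_p=n\}}=\beta(X_1,\dots,X_n)$, where $\beta(x)=1$ precisely when the path with increments $x$ first reaches $-p$ at time $n$, and let $\Theta_k$ be the cyclic shift by $k$ positions. Since the $X_i$ take values in $\{-1,0,1,\dots\}$, the cycle lemma asserts that $\sum_{k=0}^{n-1}\beta(\Theta_k x)=p$ for every $x$ whose coordinates lie in $\{-1,0,1,\dots\}$ and sum to $-p$, while this sum vanishes if the coordinates do not sum to $-p$; hence $\sum_{k=0}^{n-1}\beta\bigl(\Theta_k(X_1,\dots,X_n)\bigr)=p\,\Ind_{\{S_n=-p\}}$ almost surely. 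Because $(X_1,\dots,X_n)$ is i.i.d.\ we have $(X_1,\dots,X_n)\deq\Theta_k(X_1,\dots,X_n)$, and since $g$ is symmetric this gives $\exptm{\beta(X)g(X)}=\exptm{\beta(\Theta_kX)g(X)}$ for every $k$; averaging over $k=0,\dots,n-1$ yields the identity. I will use two instances. With $g\equiv 1$ and $p,n$ replaced by $q,m$ it is Kemperman's hitting-time formula $\P(T_q=m)=\frac{q}{m}\,\P(S_m=-q)$. With $g(x)=\sum_{i=1}^{n}f(x_i)$, and using exchangeability of $(X_1,\dots,X_n)$ under the symmetric event $\{S_n=-p\}$, the $n$-th term of the sum above becomes $\frac{p}{n-1}\,\exptm{\Ind_{\{S_n=-p\}}f(X_1)}$.

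Summing over $n$ and conditioning on $X_1$, which is independent of $X_2+\dots+X_n\deq S_{n-1}$, I would obtain
\[
\exptm{\frac{1}{T_p-1}\sum_{i=1}^{T_p}f(X_i)}=p\,\exptm{\,f(X_1)\sum_{m\ge 1}\frac{1}{m}\,\P\bigl(S'_m=-(p+X_1)\bigr)},
\]
where $S'$ is an independent copy of $S$, $m=n-1$, and the inner sum has been extended down to $m=1$ using that $\P(S'_m=-q)=0$ for $m<q$; here $q:=p+X_1\ge 1$, and it is exactly at this point that $p\ge 2$ and $X_1\ge -1$ are used. Finally, by Kemperman's formula together with $T_q<\infty$ a.s.,
\[
\sum_{m\ge 1}\frac{1}{m}\,\P(S_m=-q)=\frac{1}{q}\sum_{m\ge 1}\P(T_q=m)=\frac{1}{q},
\]
so substituting $q=p+X_1$ turns the previous display into $p\,\exptm{f(X_1)/(p+X_1)}=\exptm{f(X_1)\,p/(p+X_1)}$, which is the assertion.

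The step I expect to be the main obstacle is the bookkeeping around the ``$-1$'' in $T_p-1$: one has to see that it is precisely what converts the length $T_p=n$ of the whole excursion into the length $m=n-1$ of the walk restarted after the first jump, so that the a priori divergent series $\sum_m\frac1m\P(S_m=-q)$ is summable against Kemperman's $\frac1m$ and collapses to the total mass $\frac1{p+X_1}$ of the hitting time of the new target level $p+X_1$. The remaining care lies in the cycle-lemma accounting itself---which cyclic shifts are the ``good'' ones, the role of $X_i\ge -1$, and checking that nothing degenerates when $X_1=-1$, in which case $q=p-1$ may be as small as $1$---but this is routine.
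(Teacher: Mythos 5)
Your proof is correct and follows essentially the same route as the paper's: apply the extended Kemperman formula on the event $\{T_p=n\}$, use cyclic exchangeability to reduce $\sum_i f(X_i)$ to $n\,f(X_1)$, condition on $X_1$ via the Markov property, and then apply Kemperman a second time to collapse the resulting sum $\sum_{m\ge 1}\tfrac1m\P(S_m=-q)$ to $1/q$ with $q=p+X_1$. The only inessential difference is that you supply a self-contained derivation of the extended Kemperman identity via the Dvoretzky--Motzkin cycle lemma, whereas the paper simply cites Pitman's lecture notes for it.
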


\paragraph{From finite dimensional to $\ell^\infty(\U)$ convergence.} We now explain how we strengthen the finite-dimensional convergence of $p^{-1} \chi^{(p)}$ towards $Z_{\alpha}$ to $\ell^{\infty}( \mathcal{U})$ convergence  (see the remark after Theorem~\ref{th:main} for why this is important). One essentially needs that, with high probability, $p^{-1}\chi\0p$ is arbitrarily small outside a finite subset of $\U$, uniformly in $p$.

We first concentrate on the first $k$ generations of the tree $\U$. Using the identity in Theorem \ref{th:rw}, we compute $\E[\sum_{|u|=k} \m({p^{-1}\chi\0p(u)}^\theta]$, the discrete analogue of Biggins transform, and show that it converges to the $k$-th power of the continuous Biggins transform given in Theorem \ref{th:main} (Lemma \ref{lem:cv Lp by generation}). This yields a moment estimate on the sizes of all the loops up to generation $k$, which implies the $\ell^\infty$ convergence on the first $k$ generations (Proposition \ref{prop:convergence Uk}). In order to strenghten this to $\ell^\infty(\mathcal U)$ convergence, we 
rely on a geometric estimate on random planar maps: if we denote by $ \overline{V}(p)$ the expected volume (i.e.\ number of vertices) of a random loop-decorated quadrangulation under the distribution $\prob\0p_{n;g,h}$, then using the Markovian structure of the gasket decomposition  it is easy to check that 
$$ \sum_{|u|=k} \overline{V}(\chi\0p(u)) \quad \mbox{ is a supermartingale indexed by }k \geq 0,$$ which gives a uniform control over all generations. This additional ingredient together with recent estimates on $ \overline{V}(p)$ due to Budd \cite{BudOn} are at the core of our proof of the $\ell^{\infty}$ convergence.\medskip

\paragraph{Related works.} Understanding the geometry of planar maps decorated with a statistical physics model in one of the major goals in today's theory of random planar maps (see \cite{SheHC,GMS15,GS15a,GS15b,BLR15,Che15,GKMW16} for recent progresses on the geometry of general random planar maps decorated by a Fortuin-Kasteleyn model). Our interest in the nested cascades in $O(n)$ model on random quadrangulations was triggered by the recent work of Borot, Bouttier and Duplantier \cite{BBD16}. They study (in the case of triangulations) in great detail the number of loops separating the boundary from a typical point; in our context, this roughly speaking consists in estimating the length of a typical branch of the tree coded by the cascade $(\chi^{(p)})$. Our perspective here is different since we study the full nested tree (rather than one branch) in a scaling limit point of view (rather than in the discrete setting). In Appendix~\ref{sec:KPZ} we provide more details of the relation between the two approaches. We also give an alternative explanation of the relation with the statistics of the number of loops surrounding a small Euclidean ball in a conformal loop ensemble.

This work obviously builds upon \cite{BBG11} where the gasket decomposition was introduced and used to study the phase diagram of Fig. \ref{fig:phase diagram}. Our study of the gasket in the non-generic critical case also borrows a lot from \cite{LGM09} and indeed the law $\nu_{\alpha}$ can be interpreted as the sizes of large faces in what would be a ``stable map with a boundary''. See also \cite{BC16} for a geometric study of the duals of the above planar maps.

It was recently shown by Gwynne and Sun \cite{GS15b} that random planar maps decorated with a Fortuin--Kasteleyn statistical mechanics model (which naturally defines an ensemble of loops) converge in the so-called peanosphere topology to the Liouville quantum sphere introduced by Duplantier, Miller and Sheffield \cite{matingoftrees}, together with an independent conformal loop ensemble. This topology allows in particular to measure the lengths of the loops in the ``quantum metric'', as well as the ``quantum volume'' of their interior. A well-known conjecture stipulates that the $O(n)$-decorated quadrangulations considered in this paper converge to the Liouville quantum disk with parameter $\gamma = \sqrt{\min(\kappa,16/\kappa)}$, also introduced in \cite{matingoftrees}, together with an independent $\mathrm{CLE}_\kappa$ in the disk. Here, $\kappa\in (8/3,8)$ is related to our parameter $\alpha$ by
 \begin{equation}
  \label{eq:alpha_kappa}
  \alpha - \frac 3 2 = 4/\kappa - 1,
 \end{equation}
so that $\kappa \in (8/3,8) \backslash \{4\}$.
In fact, our Theorem~\ref{th:main} has an analogue in the continuum, which we formulate as follows:

{\em
Consider a Liouville quantum disk with parameter $\gamma = \sqrt{\min(\kappa,16/\kappa)}$ conditioned on having quantum boundary length 1 and an independent CLE$_\kappa$ 
in the disk, with $\kappa \in (8/3,8) \backslash \{4\}$.
Then 
the nesting cascade of the quantum lengths of the CLE loops has the same law as the multiplicative cascade $Z_\alpha$ introduced in this article where $\alpha$ is given by  \eqref{eq:alpha_kappa}.}

In fact, recent work of Miller, Sheffield and Werner \cite{MSWCLEpercolation} together with a different representation of the reproduction law $\nu_\alpha$ which can be derived from \cite{Bertoin2016} yields this statement.

 \medskip 

\noindent \textbf{Acknowledgments:} We thank the Newton Institute for its hospitality during the ``Random Geometry'' program in 2015 where this work started. We acknowledge also the support of Agence Nationale de la Recherche via the grants ANR Liouville (ANR-15-CE40-0013), ANR GRAAL (ANR-14-CE25-0014) and ANR Cartaplus (ANR-12-JS02-001-01). We thank Ga\"etan Borot and J\'er\'emie Bouttier for discussions about \cite{BBG12} and Christophe Garban and Jason Miller for discussions about CLE and Liouville quantum gravity. We are also very grateful to Timothy Budd for sharing his work in progress \cite{BudOn} with us and for useful comments related to Theorem~\ref{th:volume}.

\tableofcontents
\medskip

\section{Convergence of the first generation}
\label{sec:firstgene}

\newcommand*{\Bol}{\mathfrak{B}\0p_\g}
\newcommand*{\Bolp}{ \mathfrak{B}^{(p), \bullet}_{ \g}}
\newcommand*{\FBDG}{\mathfrak{F}\0p_\mathrm{BDG}}
\newcommand*{\FJS}{\mathfrak{F}\0p_\mathrm{JS}}
\newcommand*{\Deg}{\mathsf{Deg}^\downarrow}
\newcommand*{\test}[1]{\varphi\left(#1\right)}
\newcommand*{\nb}[1]{\#\mathsf{#1}}

The goal of this section is to prove the convergence of the first generation of $\chi\0p$, as stated in the following proposition:

\begin{proposition}\label{prop:1ere gen}
For any $\varphi: \ell^\infty( \mathbb{N}^{*}) \to \real$ bounded and continuous, we have
\begin{eqnarray*}
\exptm{ \test{p^{-1}\chi\0p(i) : i \geq 1} } &\cvg{}{p\to\infty}&  \expt [\test{Z_\alpha(i) : i \geq 1}].
\end{eqnarray*}
\end{proposition}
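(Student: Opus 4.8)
The plan is to follow the route announced in the introduction. First I would invoke the gasket decomposition of \cite{BBG11}: under $\mathbb{P}^{(p)}_{(n;g,h)}$ the loop-decorated quadrangulation splits into its gasket $\mathfrak{g}_p$, a bipartite Boltzmann map with perimeter $2p$ and face weights $(g_k)_{k\ge1}$ computed from $(F_k(n;g,h))_k$ via \eqref{eq:gkdugassket}, together with independent loop-decorated quadrangulations of laws $\mathbb{P}^{(k)}_{(n;g,h)}$, one inside each ``loop face'' of half-degree $k$; hence $(\chi^{(p)}(i))_{i\ge1}$ is the nonincreasing rearrangement of the half-degrees of the loop faces of $\mathfrak{g}_p$. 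By \eqref{eq:F_p asymptotic} and \cite{LGM09}, $\mathfrak{g}_p$ is a non-generic critical Boltzmann map of index $\alpha$, and its faces of diverging half-degree are loop faces with probability tending to $1$, so $(p^{-1}\chi^{(p)}(i))_i$ agrees, up to entries that vanish, with $p^{-1}$ times the rearranged half-degrees of all faces of $\mathfrak{g}_p$. I would then point $\mathfrak{g}_p$ at a uniformly chosen vertex---which biases its law by $\#V(\mathfrak{g}_p)$---and push it through the Bouttier--Di Francesco--Guitter bijection \cite{BDFG04,MM07} and the Janson--Stef\'ansson bijection \cite{JS12}, as in \cite{LGM09}. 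This yields a forest of $p$ i.i.d.\ critical Galton--Watson trees whose offspring law lies in the domain of attraction of the spectrally positive $\alpha$-stable distribution, in which the face half-degrees of $\mathfrak{g}_p$ are recorded by the positive jumps of the associated \L{}ukasiewicz walk $W$ run up to its hitting time $T_p$ of $-p$.

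The next step is an invariance principle: rescaling time by $p^{\alpha}$ and space by $p$, the walk $W$ converges in distribution for the Skorokhod topology to the stable L\'evy process $(\zeta_t)_{t\ge0}$, jointly with the convergences $p^{-\alpha}T_p\to\tau$ and $p^{-\alpha}\#V(\mathfrak{g}_p)\to$ (a positive multiple of $\tau$). Since $\zeta$ has no negative jumps, both $\tau$ and the nonincreasing sequence of jumps of $\zeta$ on $[0,\tau]$ are almost surely continuous functionals of the path, so under the \emph{pointed} law $(p^{-1}\chi^{(p)}(i))_i$ converges to $\Dzeta_\tau$. Removing the distinguished vertex amounts to reweighting by $1/\#V(\mathfrak{g}_p)$, which in the limit multiplies the law by a quantity proportional to $\tau^{-1}$---exactly the reweighting in the definition of $\nu_\alpha$. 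Provided $\{p^{\alpha}/\#V(\mathfrak{g}_p)\}_p$ is uniformly integrable under the pointed law, so that this reweighting survives the passage to the weak limit, this gives convergence of the finite-dimensional marginals of $(p^{-1}\chi^{(p)}(i))_i$ towards those of $\nu_\alpha=\mathrm{law}\big((Z_\alpha(i))_{i\ge1}\big)$.

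To upgrade finite-dimensional to $\ell^\infty(\mathbb{N}^*)$ convergence, I would fix $\theta\in(\alpha,\alpha+1)$ and apply Theorem~\ref{th:rw} to $W$ with $f(x)=(x\vee0)^\theta$: because $\theta-1<\alpha$, the right-hand side $\mathbb{E}\big[(X_1\vee0)^\theta\,\tfrac p{p+X_1}\big]$ is finite and of order $p^{\theta-\alpha}$, so the biased first moment of $\sum_{i\le T_p}(X_i\vee0)^\theta$ has this same order; combined with the $p^{\alpha}$-scale of $T_p$ and $\#V(\mathfrak{g}_p)$ and the bias analysis above, this yields $\sup_p\mathbb{E}^{(p)}_{(n;g,h)}\big[\sum_{i\ge1}(p^{-1}\chi^{(p)}(i))^\theta\big]<\infty$ (and in fact this quantity converges to $\phi_\alpha(\theta)$). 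Since $Nx_N^\theta\le\sum_i x_i^\theta$ for any nonincreasing nonnegative sequence $(x_i)$, this moment bound and Markov's inequality produce, for each $\varepsilon>0$, a compact subset of $\ell^\infty(\mathbb{N}^*)$ carrying all but $\varepsilon$ of the mass of $(p^{-1}\chi^{(p)}(i))_i$ uniformly in $p$, i.e.\ tightness in $\ell^\infty(\mathbb{N}^*)$. Tightness together with the finite-dimensional convergence of the previous paragraph (and uniqueness of the subsequential limit) gives convergence in distribution in $\ell^\infty(\mathbb{N}^*)$, which is precisely the statement for every bounded continuous $\varphi$.

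The \emph{main obstacle} is the pointing bias: establishing the joint convergence of $p^{-\alpha}\#V(\mathfrak{g}_p)$ with the \L{}ukasiewicz path and, above all, the uniform integrability of $p^{\alpha}/\#V(\mathfrak{g}_p)$, which is what makes the $\tau^{-1}$-reweighting legitimate in the scaling limit. Since the down-steps of $W$ are equal to $-1$ one has $T_p\ge p$ deterministically, and the lower deviations of $T_p$---hence the required uniform integrability---can be controlled via the cycle lemma (Kemperman's identity $\mathbb{P}(T_p=n)=\tfrac pn\mathbb{P}(S_n=-p)$), a local limit theorem, and the super-exponential left tail of the spectrally positive $\alpha$-stable density; alternatively one may invoke the volume estimates of \cite{BudOn}. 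A secondary point, to be checked uniformly in $p$, is that the macroscopic faces of $\mathfrak{g}_p$ are indeed loop faces, so that the first-generation half-perimeters are faithfully encoded by the large jumps of $W$.
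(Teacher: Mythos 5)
Your proposal is correct and follows essentially the same route as the paper: gasket decomposition, pointing and unbiasing by $1/\#V$, the Bouttier--Di Francesco--Guitter and Janson--Stef\'ansson bijections, the \L{}ukasiewicz-walk invariance principle, and uniform integrability of $p^\alpha/\#V(\mathfrak g_p)$ (equivalently $p^\alpha/L_p$) to transport the bias to the limit $\tau^{-1}$. The only organisational differences are that you establish the $\ell^\infty(\mathbb{N}^*)$ convergence via tightness from the $\theta$-moment bound obtained through Theorem~\ref{th:rw}---which the paper instead proves later as the $k=1$ case of Lemma~\ref{lem:cv Lp by generation}, quoting the $\ell^\infty$ convergence of jumps as a ``standard argument'' in Proposition~\ref{prop:1ere gen}---and that you propose Kemperman plus a local limit theorem plus the super-exponential left tail of the stable density for the uniform integrability, whereas the paper proves this (in the sharper form needed to also control $L_p/T_p$) via an exponential-martingale/Chernoff estimate in Lemma~\ref{lem:large deviation}.
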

\noindent We will follow the scheme outlined in the introduction, which is summarized in the following diagram.

\vspace{1ex}
\begin{figure}[h!]
\includegraphics[scale=0.8,page=1]{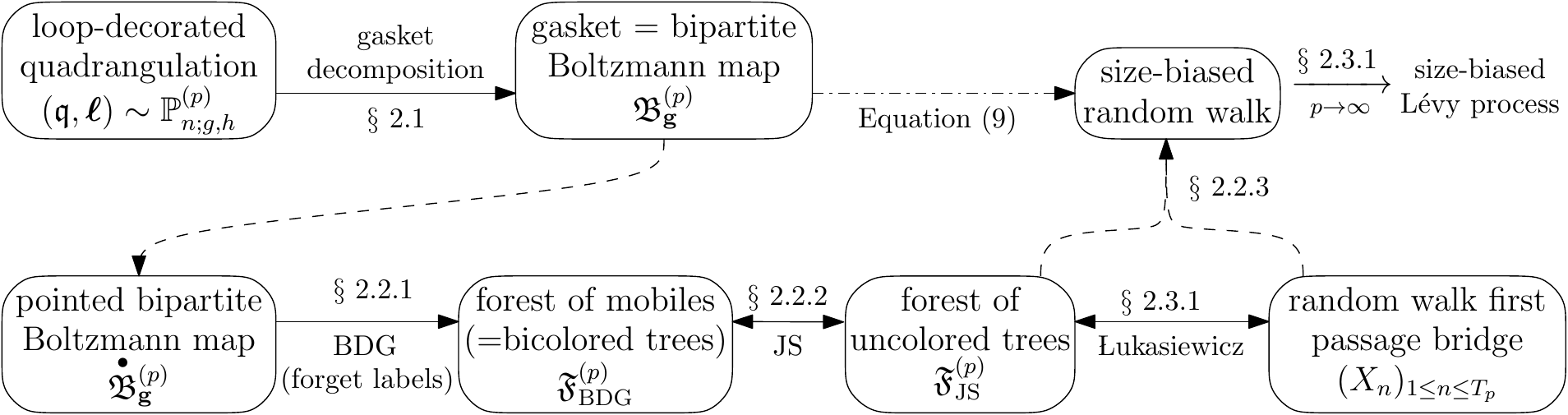}
\caption{A solid arrow $A\to B$ indicates a transformation that encodes $A$ by $B$. It is a bijection when the arrow is two-headed. A dashed arrow $A\dashrightarrow B$ indicates that the random object $B$ is obtained by size-biasing the law of $A$. Equation \eqref{eq:apresluk} is obtained by composing all these transformations and size-biasing relations. Using it, we deduce Proposition \ref{prop:1ere gen} from the classical convergence of random walks to L\'evy processes.}
\end{figure}
\vspace{-1em}

\subsection{The gasket decomposition}\label{sec:gasket}

We first recall the gasket decomposition of \cite{BBG11}.
Given a loop-decorated quadrangulation $\ql\in\Op$, let $l \geq 0$ be the number of outer-most loops in $\boldsymbol\ell$, i.e.\ loops which can be reached from the boundary of $\mathfrak{q}$ without crossing any other loop. The gasket decomposition consists in erasing all the outer-most loops and all the edges crossed by these loops. This disconnects the map into $l+1$ connected components:
\begin{itemize}
\item The \emph{gasket} is the connected component containing the external face of $ \mathfrak{q}$. This is a rooted bipartite planar map (without loops) with a boundary of length $2p$.
An internal face of the gasket is either a quadrangular face inherited from $\mathfrak{q}$, or one of the $l$ \emph{holes} obtained by removing the outer-most loops and their interior component.
Notice that a hole may have a non-simple boundary.
See Fig.~\ref{fig:taking-gasket}.

\item The $l$ remaining connected components are contained in the holes. More precisely, inside a hole of degree $2p'$, we find an element $(\mathfrak{q}', \boldsymbol{\ell}') \in\mathcal{O}_{p'}$.
\end{itemize}
\begin{figure}
\begin{center}
\includegraphics[width=15cm]{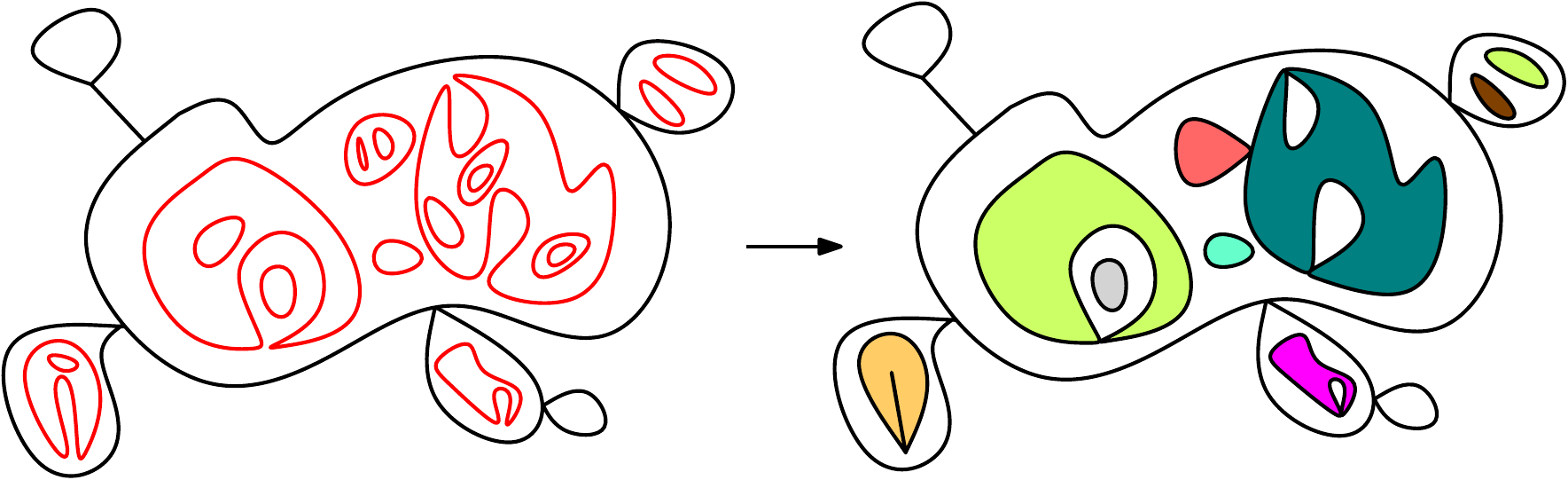}
\end{center}
\vspace{-1em}
\caption{Illustration of the gasket decomposition. Notice that after removing the outer-most loops and their interiors, the holes in the gasket may be non-simple faces. See also \cite[Fig.~4]{BBG11}}
\label{fig:taking-gasket}
\end{figure}
To be rigorous, we need to specify a root edge for each internal quadrangulation $(\mathfrak{q}',\boldsymbol\ell')$.
This can be done in a deterministic way thanks to the lack of automorphisms of rooted planar maps.
Similarly, the holes can also be numbered from $1$ to $l$ in a deterministic fashion.
Therefore, given a bipartite map $\mathfrak{b}$ of perimeter $2p$, a loop decorated quadrangulation $\ql$  that admits $\mathfrak{b}$ as gasket can be recovered by gluing a loop-decorated quadrangulation in $\mathcal{O}_k$---wrapped in a ``collar'' of $2k$ quadrangles traversed by a loop---into each face of $\mathfrak{b}$ degree $2k$ (or, if $k=2$ we have the additional possibility of gluing a plain quadrangle), see Figures 3 and 4 in \cite{BBG11}.

Then it follows from \eqref{eq:weights} that the gasket of a loop-decorated quadrangulation of distribution $\mathsf{w}_{(n;g,h)}$ is distributed according to the so-called $\g$-Boltzmann measure (see \cite{MM07})  on bipartite maps defined as
$$ w_{ \mathbf{g}}( \mathfrak{m}) = \prod_{ f \in \mathsf{Faces}( \mathfrak{m}) \backslash \{ f_{ \mathrm{r}}\} } g_{ \mathrm{deg}(f)/2},$$
where in our case the weight sequence $ \mathbf{g}=(g_{k})_{k \geq 1}$ is related to the $O(n)$ model by the relations 
\begin{equation}\label{eq:gkdugassket}
g_k = g\delta_{k,2} + nh^{2k} F_k(n;g,h),
\end{equation}
see \cite[Eq.~(2.3)]{BBG11}. If the weight sequence $ \mathbf{g}$ is such that for every $p \geq 1$, the total $ w_{ \mathbf{g}}$-mass of bipartite maps with perimeter $2p$ is finite, then the above $ \mathbf{g}$-Boltzmann measure can be normalized to define a random $ \mathbf{g}$-Boltzmann map $\Bol$ with perimeter $2p$. This is clearly implied in our context by the admissibility of the parameters $(n;g,h)$.  In the next section, we recall classical codings of Boltzmann maps (not necessarily related to the gasket of $O(n)$-decorated quadrangulations) via random labeled forests.

\subsection{Coding of bipartite Boltzmann maps with a boundary}
The coding of bipartite Boltzmann planar maps via the Bouttier--Di Francesco--Guitter (BDG) bijection  \cite{BDFG04} and the study of the induced distribution on random planar trees has been studied in depth in \cite{MM07} and more recently in \cite{BM15}. We shall recall the necessary background here referring to \cite{BM15} for details. To present the coding in its simplest form, we have to deal with \emph{pointed}  planar maps rather than  maps.\medskip 

\subsubsection{BDG coding}
A \emph{pointed map} is a map given together with a distinguished vertex $\rho$.
Let $( \mathfrak{m}, \rho)$ be a pointed bipartite map with a boundary of degree $2p$. A slight variation \cite[Section 3.3]{BM15} of the classical BDG bijection in the context of pointed bipartite maps is defined as follows.

\begin{enumerate}
\item
Draw a vertex in each face of $\mathfrak{m}$ (including the external face).
The new vertices are considered black ($\bullet$) and the old ones white ($\circ$).
Label each white vertex by its distance to the distinguished vertex $\rho$.
Since the map is bipartite, the labels of any two adjacent vertices differ exactly by one.
\item
For a face $f$ of $\mathfrak{m}$ and a white vertex adjacent to $f$, link the white vertex to the black vertex inside $f$ if the next white vertex in the clockwise order around $f$ has a smaller label.
\item
Remove the edges of $\mathfrak{m}$ and the vertex $\rho$.
It can be shown that the resulting graph is a tree \cite{BDFG04}.
\item
Let $v_0$ be the black vertex corresponding to the external face of $\mathfrak{m}$.
By removing $v_0$ and its adjacent edges, we obtain a forest of cyclically ordered trees, rooted at the neighbors of $v_0$.
Finally, we choose uniformly at random one of the trees to be the first one, and subtract the labels in all trees by a constant so that the label of the root vertex of this first tree becomes zero.
\end{enumerate}
With a moment of thought on the Step 2 of the above construction, one observes that
\begin{enumerate}[label=(\roman*)]
\item	Each internal face of degree $2k$ in $\mathfrak{m}$ gives rise to a black vertex of degree $k$ in the forest, and the forest is composed of $p$ trees.
\item	Given a black vertex of degree $k$, the possible labels on its (white) neighbors are exactly those which, when read in the clockwise order around the black vertex, can decrease at most by 1 at each step. 
If the label of one neighbor is fixed, then there are exactly ${2k-1 \choose k-1}$ possible labelings of the other neighbors which satisfy the above constraint \cite[Proof of Proposition 7]{MM07}.
\end{enumerate}
A \emph{mobile} is a rooted plane tree whose vertices at even (resp.\ odd) generations are white (resp.\ black).
We say that a forest of mobiles $(\mathfrak{t}_1,\cdots,\mathfrak{t}_p)$ is \emph{well-labeled} if (a) the root vertex of $\mathfrak{t}_1$ has label 0, (b) the labels satisfy the constraint in the observation (ii) above, and (c) the labels of the roots of $\mathfrak{t}_1,\cdots,\mathfrak{t}_p$ satisfy the similar constraint.
See \cite[Section 6.1]{BM15} for more details and a construction of the inverse mapping.

\begin{figure}
\begin{center}
\includegraphics[scale=1.2]{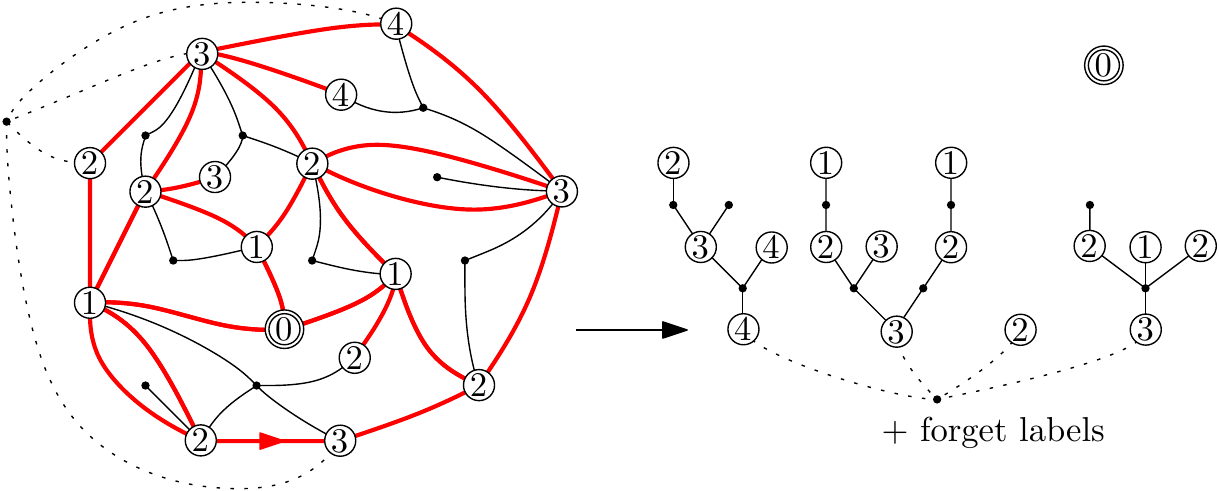}
\end{center}
\caption{Illustration of the construction of a forest of $p$ mobiles from a pointed bipartite planar map with a boundary of perimeter $2p$. The first mobile in the forest is not specified by the map and is chosen uniformly at random among all the mobiles.}
\label{fig:bdg}
\end{figure}

\medskip 

Let us now describe the effect of this coding on the Boltzmann measure. Given a weight sequence $\g=(g_k)_{k\ge 1}$, the definition of the $  \mathbf{g}$-Boltzmann measure $ w_{ \mathbf{g}}$ naturally extends to \emph{pointed} planar maps with the same formula. We suppose as above that the $w_{ \mathbf{g}}$-mass of bipartite maps with a given perimeter is finite. This implies in particular that the $w_{ \mathbf{g}}$-mass of all \emph{pointed} bipartite maps with a given perimeter is also finite. (This can be deduced from (3.2) in \cite{BBG11} and its pointed analogue. See \cite[Corollary 23]{CurPeccot}.) In these equivalent cases the weight sequence $ \mathbf{g}$ is called \emph{admissible}. (This should not be confounded with the admissibility of a triple $(n,g,h)$. The latter implies that the former when $\mathbf{g}$ is defined by \eqref{eq:gkdugassket}, but the inverse is not obvious \cite{BudOn}.) Under this assumption we can define a random bipartite map $ \mathfrak{B}_{ \mathbf{g}}^{(p), \bullet}$ with a boundary of perimeter $2p$ by normalizing the above Boltzmann measure.

If we let $\FBDG$ be the unlabeled forest of mobiles obtained by applying the construction 1.--4.\ to $\Bolp$, then it follows from the observations (i) and (ii) that $\FBDG$ is also Boltzmann distributed, with a weight 1 for white vertices and a weight
$\tg_k = {2k-1 \choose k-1} g_k$
for each black vertex of degree $k$. 
More precisely,
$$ \prob(\FBDG = \mathfrak{f})
\propto \prod_{v\in \bullet(\mathfrak{f})} \tg_{\deg(v)}. $$
where $\bullet(\mathfrak{f})$ is the set of black vertices of $\mathfrak{f}$, and the probability measure is normalized over all forests of $p$ finite mobiles. It has been shown in \cite[Proposition 7]{MM07} that $\FBDG$ is a two-type Galton--Watson forest whose law is given explicitly in terms of $\g$. For our purpose (which is proving Proposition \ref{prop:1ere gen}), we could redo the classical analysis of Galton--Watson trees in this context of multi-type Galton--Watson trees but we use a much quicker road using a recent trick discovered by Janson \& Stef\'ansson \cite{JS12}.

\subsubsection{Janson \& Stef\'ansson's trick}
 
In \cite[Section 3]{JS12}, Janson \& Stef\'ansson discovered a mapping which transforms a mobile into a rooted plane tree by keeping the same set of vertices, but changing the set of edges so that every white vertex is mapped to a leaf, and every black vertex of degree $k$ is mapped to an internal vertex with $k$ children.
We refer to \cite[Section 3.2]{CKperco} for details of this transformation.
The curious reader may have a look at the figure below and try to guess how the bijection works.
\begin{figure}[!h]
\begin{center}
 \includegraphics[width=14cm]{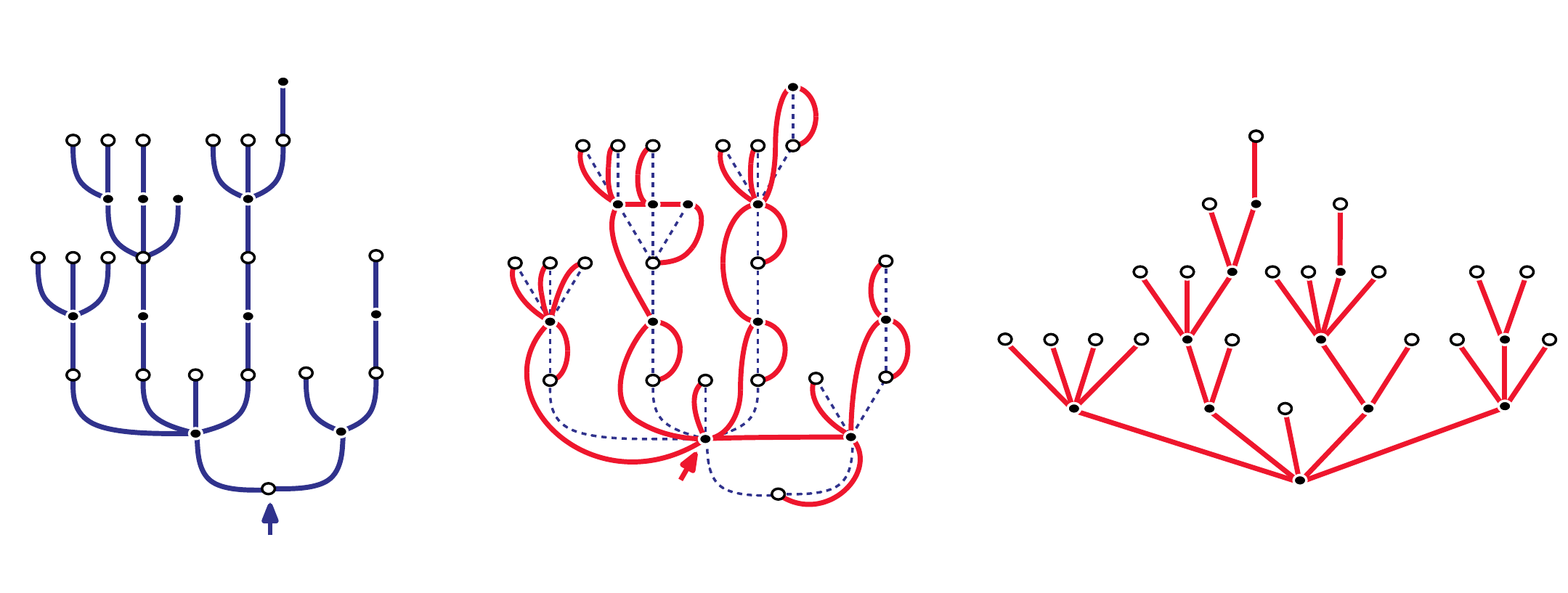}
 \caption{Illustration of the Janson \& Stef\'ansson transformation.}
\end{center}
\end{figure}

The usefulness of this bijection is that the two-type Galton--Watson trees which arise from the BDG bijection in the last section are transformed into a (one-type) Galton--Watson tree.
In our setting, let $\FJS$ be the image of $\FBDG$ under this bijection. The next proposition gathers and summarizes \cite[Proposition 1]{MM07} and \cite[Appendix]{JS12} (see also \cite[Proposition 3.6]{CKperco}) in our context:
\begin{proposition}\label{prop:law JS}
The weight sequence $\g=(g_k)_{k\ge 1}$ is admissible if and only if the equation on $x$
$$ \varphi_\g(x) := \sum_{k\ge 1} \tilde{g}_k x^k = x-1	$$
has a positive solution.
For an admissible $\g$, let $Z_\g$ be the smallest such solution, then $\FJS$ is a forest of $p$ i.i.d.\ Galton--Watson trees with offspring distribution
\begin{equation}\label{eq:def mujs}
	\mujs(0) = \frac{1}{Z_\g}
\qquad\text{and}\qquad
	\mujs(k) = \tilde{g}_k Z_\g^{k-1}\ ,\quad k\ge 1.	
\end{equation}
\end{proposition}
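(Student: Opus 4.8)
The statement gathers several known facts, so the plan is to assemble them in the right order and then run one generating-function computation. The paragraph preceding the proposition already establishes, via the BDG bijection and observation (ii), that $\FBDG$ is Boltzmann-distributed on forests of $p$ mobiles with vertex weight $1$ for each white vertex and $\tg_k$ for each black vertex of degree $k$. The first thing I would invoke is that the Janson--Stef\'ansson map is a bijection between forests of $p$ mobiles and forests of $p$ plane trees which keeps the vertex set, turns every white vertex into a leaf, and turns every black vertex of degree $k$ into an internal vertex with exactly $k$ children (\cite[Section 3]{JS12}, \cite[Section 3.2]{CKperco}). Since the Boltzmann weight $\prod_{v\in\bullet(\mathfrak f)}\tg_{\deg v}$ depends only on the multiset of degrees of the black vertices, and this multiset equals the multiset of out-degrees of the internal vertices of the image forest, it follows that $\FJS$ is Boltzmann-distributed on forests of $p$ plane trees with weight sequence $w_0:=1$, $w_k:=\tg_k$ for $k\ge1$. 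As a Boltzmann forest of $p$ plane trees of finite total mass is simply $p$ i.i.d.\ Boltzmann plane trees, it is then enough to analyse a single Boltzmann plane tree with weights $(w_k)_{k\ge0}$.

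Next I would set up the generating function. Let $W:=\sum_{t}\prod_{v\in t}w_{c(v)}\in(0,\infty]$, where the sum is over finite plane trees $t$ and $c(v)$ denotes the number of children of $v$. Decomposing a plane tree into its root and the ordered list of subtrees hanging from it gives the fixed-point equation $W=\sum_{k\ge0}w_kW^k=G(W)$, with $G(x):=\sum_{k\ge0}w_kx^k=1+\varphi_\g(x)$. Since $G$ is nondecreasing and convex on $[0,\infty)$ with $G(0)=1$, and the masses $W_n$ of trees of height $\le n$ satisfy $W_{n+1}=G(W_n)$ and increase to $W$, an immediate induction (started from $W_{-1}:=0$) gives $W_n\le x^\star$ for every positive fixed point $x^\star$ of $G$; hence $W<\infty$ iff $G$ has a positive fixed point, in which case $W=Z_\g$ is the smallest one. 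Equivalently $W<\infty$ iff $\varphi_\g(x)=x-1$ has a positive solution, and $Z_\g$ is the smallest such. Tracking admissibility through the two bijections — admissibility of $\g$ means that bipartite maps of a given perimeter have finite $w_\g$-mass, equivalently (pointing, then BDG, then JS) that forests of $p$ mobiles, equivalently forests of $p$ plane trees, have finite Boltzmann mass, i.e.\ $W^p<\infty$ — yields the first assertion of the proposition.

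For the identification of the law, assume $\g$ admissible and set $\mujs(k):=w_kZ_\g^{k-1}$, so that $\mujs(0)=1/Z_\g$ and $\mujs(k)=\tg_kZ_\g^{k-1}$ for $k\ge1$; then $\sum_{k\ge0}\mujs(k)=Z_\g^{-1}G(Z_\g)=1$, so $\mujs$ is a probability measure. For any finite plane tree $t$, using $\sum_{v\in t}c(v)=|t|-1$ one gets
$$\prod_{v\in t}\mujs(c(v))=Z_\g^{\sum_{v\in t}(c(v)-1)}\prod_{v\in t}w_{c(v)}=\frac1{Z_\g}\prod_{v\in t}w_{c(v)},$$
so the law of a GW($\mujs$) tree assigns to $t$ exactly its $w$-Boltzmann weight divided by $W=Z_\g$. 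To conclude that this GW law coincides with the Boltzmann plane-tree measure (rather than only agreeing with it up to a defect on the event ``the tree is infinite''), it remains to check that $\mujs$ is (sub)critical: its mean is $\sum_{k\ge1}k\tg_kZ_\g^{k-1}=\varphi_\g'(Z_\g)$, and since $\varphi_\g$ is convex with $\varphi_\g(0)=0>-1$, the graph of $\varphi_\g$ lies strictly above the line $x\mapsto x-1$ on $[0,Z_\g)$, which forces $\varphi_\g'(Z_\g)\le1$. Hence GW($\mujs$) is a.s.\ finite and equals the Boltzmann plane-tree measure; taking $p$ independent copies gives the statement for $\FJS$. I do not expect any single step to be a real obstacle; the only genuine care needed is (a) matching the three notions of admissibility (maps / mobile forests / plane forests), including the harmless effect of the pointing discussed earlier, and (b) the (sub)criticality of $\mujs$ at the \emph{smallest} root, which is precisely what upgrades the formal weight identity into an identity of probability measures.
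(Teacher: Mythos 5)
The paper does not actually prove this proposition: it is stated explicitly as a summary of \cite[Proposition~1]{MM07} and \cite[Appendix]{JS12} (see also \cite[Proposition~3.6]{CKperco}), so there is no internal argument to compare against. Your self-contained derivation is correct and essentially reconstructs what those references establish: transport the Boltzmann weight on mobile forests through the degree-preserving JS bijection to a Boltzmann weight $w_0=1$, $w_k=\tg_k$ on plane-tree forests; the single-tree mass $W$ satisfies $W=G(W)$ with $G=1+\varphi_\g$; $W<\infty$ iff $G$ has a positive fixed point, in which case $W$ equals the smallest one $Z_\g$ by the monotone iteration argument; the reweighting $\mujs(k)=w_kZ_\g^{k-1}$ is a probability law because $G(Z_\g)=Z_\g$, and the identity $\sum_{v\in t}c(v)=|t|-1$ shows the GW$(\mujs)$ weight of every finite tree equals the normalized Boltzmann weight; finally the convexity argument at the smallest root gives $\varphi_\g'(Z_\g)\le1$, so GW$(\mujs)$ is supported on finite trees and therefore coincides, as a probability measure, with the Boltzmann plane-tree measure.

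The one step you should not describe as ``harmless'' is the passage between unpointed and pointed admissibility. The chain of bijections (BDG, then JS) runs through \emph{pointed} maps, so what your argument directly characterizes is finiteness of the pointed Boltzmann mass. The implication ``pointed finite $\Rightarrow$ unpointed finite'' is immediate, but the converse — that finiteness of the unpointed mass of bipartite maps of fixed perimeter forces finiteness of the pointed mass, i.e.\ finiteness of the expected number of vertices — is a genuine, non-obvious fact, which the paper explicitly defers to \cite[Eq.~(3.2)]{BBG11} and \cite[Corollary~23]{CurPeccot}. Since the paper itself treats this as a cited input and you flag it, your proof is not incomplete in context, but it is worth being precise that this is a real theorem about the weight sequence rather than a formality of the bijection.
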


With this one can (at last) state the definition of non-generic criticality that we imposed in this paper (this obviously matches the definition given in \cite{BBG11}  and in \cite{LGM09} for the associated $ \mathbf{g}$ sequence, see also \cite[Section 5.1.3]{CurPeccot}):

\begin{definition} \label{def:non-generic} A weight sequence $ \mathbf{g}$ is called critical and non-generic with exponent $\alpha \in (1,2)$ if the sequence 
$ \mathbf{g}$ is admissible for the pointed bipartite map model and if the offspring distribution $ \mu_{ \mathsf{JS}}$  is critical (i.e.\ it has mean one) and satisfies 
$$ \mu_{ \mathsf{JS}}(k) \sim C k^{-\alpha-1}, \quad \mbox{ as }k \to \infty, \quad \mbox{ for some }C>0.$$
By extension, a triplet $(n;g,h)$ with $n \in (0,2)$ and $g,h \geq 0$ is called critical and non-generic if the associated $ \mathbf{g}$ sequence defined by \eqref{eq:gkdugassket} is critical and non-generic.
\end{definition}
Notice in particular that in the above definition $ \mu_{ \mathsf{JS}}(k)$ has a polynomial tail. When the weight sequence $ \mathbf{g}$ is derived from $(n;g,h)$ as in \eqref{eq:gkdugassket} this means that the exponential factors in \eqref{eq:F_p asymptotic}, \eqref{eq:gkdugassket} and \eqref{eq:def mujs} must cancel out to leave only the polynomial part in the asymptotic of $ \mu_{ \mathsf{ JS}}(k)$, see \cite[Section 3.3]{BBG11}.

\subsubsection{Back to the non-pointed gasket}
In the last section, we have recalled the coding of \emph{pointed} Boltzmann maps via random trees. 
To come back to non-pointed maps, we need to bias the law of the pointed map by the inverse of its number of vertices.
Notice that under the BDG bijection and Janson \& Stef\'ansson's mapping, the vertices of $\Bolp$, except the distinguished one, are mapped to the white vertices of $\FBDG$, then to the leaves of $\FJS$.

To summarize this chain of transformations into one equation, let $\Deg_f$ (resp.\ $\Deg_\bullet$; $\Deg_\mathtt{out}$) denote the sequence of degrees of faces (resp.\ degrees of black vertices; numbers of children) in a bipartite map (resp.\ forest of mobiles; forest of trees), ranked in the decreasing order and padded into an infinite sequence with zeros. Recall also that $ \Bol$ (resp. $\Bolp$) is a  $ \mathbf{g}$-Boltzmann map (resp.\ pointed map) with a boundary of perimeter $2p$. 
Then for any positive measurable function $\varphi:\natural^{\natural^*}\!\to\real$ we have
\vspace{-1ex}
\begin{eqnarray}
\exptm{\test{ \frac12 \Deg_f(\Bol)}}
&=&	\frac1{ \expt[1/\, \nb{Vertex}(\Bolp)] }	
	\exptm{ \frac{ \test{\frac12\Deg_f(\Bolp)} }{ \nb{Vertex}(\Bolp) } }
\notag\\&=&	\frac1{ \expt[1/\, (1+\#\circ(\FBDG))] }
	\exptm{ \frac{ \test{\Deg_\bullet(\FBDG)} }{ 1+\#\circ(\FBDG) } }
\notag\\&=&	\frac1{\expt[ 1/\,(1+\nb{Leaf}(\FJS))]}	
	\exptm{ \frac{ \test{\Deg_\mathtt{out}(\FJS)} }{ 1+\nb{Leaf}(\FJS) }}.
\label{eq:unpointing the gasket}
\end{eqnarray}
The above chain of equality is valid for any admissible sequence $\g$.

\subsection{Scaling limit of the face degrees in the gasket}

The discussion in the previous section is valid for Boltzmann maps with a general (admissible) weight sequence, and we now consider a weight sequence $\mathbf{g}$ that is derived from a critical non-generic set of parameters $(n;g,h)$ with exponent $ \alpha \in (1,2)$ and prove Proposition \ref{prop:1ere gen} (but the results are valid for any critical non-generic weight sequence $ \mathbf{g}$ as those considered in \cite{LGM09}, \cite{BC16}, \cite[Section 5.1.3]{CurPeccot}).

\subsubsection{Random walk coding}\label{sec:randomwalkcoding}

We now use the well-known random walk coding of trees and forests to study the right-hand side of \eqref{eq:unpointing the gasket}. Let $S_n=X_1+\cdots+X_n$ be a random walk where $(X_n)_{n\ge 1}$ is an i.i.d.\ sequence with distribution
$\proba(X_1+1=k)=\mujs(k)$ for $k\ge 0$. Define the first passage time of $(S_n)_{n\ge 0}$ to the level $-p$
$$
	T_p = \inf\set{n\ge 0}{S_n=-p},
$$
and let $L_p=\sum_{i=1}^{T_p} \idd{X_i=-1}$ be the number of negative steps of the walk up to $T_p$. 
Let $\boldX$ be the sequence $(X_n+1)_{1\le n\le T_p}$ ranked in decreasing order and padded with zeros. 
The classical coding of forests by their \L{}ukasiewicz paths shows that the sequence $\boldX$ has the same law as $\Deg_\mathtt{out}(\FJS)$ and that jointly we have $\nb{Leaf}(\FJS) = L_p$ in distribution.
Therefore \eqref{eq:unpointing the gasket} can be continued into
\begin{eqnarray}
\exptm{\test{ \frac12 \Deg_f(\Bol)}}
  &=&  \frac1{\expt[ 1/\,(1+\nb{Leaf}(\FJS))]}	
	\exptm{ \frac{ \test{\Deg_\mathtt{out}(\FJS)} }{ 1+\nb{Leaf}(\FJS) }}\nonumber \\
 &=& \frac{1}{{ \mathbb{E}[1/(1+L_{p})]}} \mathbb{E}\left[\frac{\varphi\mb({ \boldX}}{1+L_{p}}\right].\label{eq:apresluk}
\end{eqnarray}

By definition, the first generation of $\chi\0p$ is the sequence of half-degrees of the \emph{holes} in the gasket, sorted in the decreasing order and padded with zeros.
Recall that the faces of the gasket are either holes or regular faces of degree 4.
Therefore, if $\g$ is the weight sequence given by \eqref{eq:gkdugassket}, then the first generation of $\chi\0p$ differs from $\frac12\Deg_f(\Bol)$ at most by 2 in the $\ell^\infty(\natural^{*})$ norm. From the last display and the fact that $L_{p} \geq p$ it follows that, for any bounded continuous function $\varphi: \ell^\infty(\natural^{*}) \to \real$, we have
\begin{eqnarray}
\exptm{\test{p^{-1}\chi\0p(i) : i\ge 1} }
&=&		\frac1{\expt[ 1/ L_{p}]}	
\exptm{ \frac{ \test{p^{-1}\cdot \boldsymbol{ \mathcal{X}}^{(p)}} }{ L_{p} }} +o(1).
\label{eq:1st gen}
\end{eqnarray}
as $p\to\infty$. With all the reductions we have been through we are now in position to prove Proposition \ref{prop:1ere gen}.

\begin{proof}[Proof of Proposition \ref{prop:1ere gen}]
Recall from \eqref{eq:def mujs} that the step distribution of the walk $S$ is supposed by Definition~\ref{def:non-generic} to be centered and in the domain of attraction of the totally asymmetric stable law of parameter $ \alpha$. Recall also the notation from the Introduction and in particular that $\zeta$ is a standard $\alpha$-stable L\'evy process with no negative jumps. We an suppose that $\zeta$ has been normalized so that by a classical invariance principle we have
$$ \m({ \frac{1}{n} S_{[t n^{ \alpha}]}}_{t \geq 0} \xrightarrow[n\to\infty]{(d)} (\zeta_{t})_{t \geq 0}$$ for the Skorokhod topology. With standard arguments, one can show that the above convergence in distribution holds jointly with (using the notation of the Introduction)
\begin{equation} \label{eq:convdeuxtrucs} 
p^{-\alpha} T_p    \xrightarrow[p\to\infty]{(d)} \tau \quad\mbox{ and }\quad
\frac{1}{p} \boldX \xrightarrow[p\to\infty]{(d)} \Dzeta
\end{equation}
where the second convergence takes place in the $\ell^\infty(\natural^{*})$ topology.
We now give a lemma controlling $L_p$ via $T_p$ in a precise manner:

\begin{lemma}\label{lem:large deviation}
There is $c>0$ depending only on the weight sequence $\g$, such that for all $\varepsilon>0$ and $p\ge 1$,
\begin{flalign}\label{eq:bound 1}
&&	\prob(T_p\le \varepsilon\,p^\alpha) 
\le \prob(L_p\le \varepsilon\,p^\alpha) 
& \le \exp\mb({ -c\, \varepsilon^{-\frac1{\alpha-1}} }&
\\\label{eq:bound 2}
&&	\prob\mB({ \Big|\frac{L_p}{T_p} - \mujs(0) \Big| \ge p^{-\alpha/4} }
& \le c^{-1} p^{\alpha/2}\exp(- c\,\sqrt{p})&
\\\label{eq:bound 3}\text{and for all }K\ge \frac2{\mujs(0)},\hspace{-3cm}
&&	\prob\mB({ \frac{T_p}{L_p} \ge K }
& \le c^{-1}\exp(- c\,K p).&
\end{flalign}%
\end{lemma}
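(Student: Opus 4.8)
The plan is to deduce all three estimates from elementary one-dimensional facts about the walk $S$. Two structural observations drive everything. First, since $S$ is skip-free to the left, the strong Markov property applied at the successive hitting times of $-1,-2,\dots,-p$ gives $T_p=\sum_{j=1}^p T_1^{(j)}$ and $L_p=\sum_{j=1}^p L_1^{(j)}$, where the pairs $(T_1^{(j)},L_1^{(j)})$ are i.i.d.\ copies of $(T_1,L_1)$ and $T_1,L_1\ge 1$. Second, Kemperman's cycle-lemma identity $\mathbb{P}(T_1=n)=\tfrac1n\mathbb{P}(S_n=-1)$ together with the local limit theorem for $S_n$ (which is centred and in the domain of attraction of the spectrally positive $\alpha$-stable law, so that $\mathbb{P}(S_n=-1)\asymp n^{-1/\alpha}$) yields the two-sided tail bound $\mathbb{P}(T_1>t)\asymp t^{-1/\alpha}$. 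The same bound holds for $L_1$: writing $U_j$ for the up-movement that $S$ accumulates between its $(j-1)$-st and $j$-th $(-1)$-step, the increments $U_j-1$ are i.i.d., centred and again in the spectrally positive $\alpha$-stable domain, and $L_p$ is exactly the hitting time of $-p$ by the skip-free walk $i\mapsto\sum_{l\le i}(U_l-1)$.

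For \eqref{eq:bound 1} the first inequality is immediate from $L_p\le T_p$. For the second, put $m=\varepsilon p^\alpha$ and assume $m\ge p$ (otherwise $\{L_p\le m\}=\varnothing$). If $L_p\le m$ then $\#\{j\le p:\,L_1^{(j)}>t\}<m/t$ for every $t\ge 1$, so with $q_t:=\mathbb{P}(L_1>t)$,
$$ \mathbb{P}(L_p\le m)\ \le\ \mathbb{P}\!\left(\mathrm{Bin}(p,q_t)<m/t\right). $$
Choosing $t\asymp(m/p)^{\alpha/(\alpha-1)}$ makes $m/t\le\tfrac12 p\,q_t$ (here one uses $q_t\asymp t^{-1/\alpha}$), and a standard lower-tail Chernoff bound for the binomial then gives $\mathbb{P}(\mathrm{Bin}(p,q_t)<m/t)\le\exp(-c\,p\,q_t)$. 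Since $p\,q_t\asymp p\,(p/m)^{1/(\alpha-1)}=p^{\alpha/(\alpha-1)}m^{-1/(\alpha-1)}=\varepsilon^{-1/(\alpha-1)}$, this is exactly the claimed bound. (When $m$ is comparable to $p$---equivalently $p\asymp\varepsilon^{-1/(\alpha-1)}$---the ideal $t$ is of constant order and one runs the same binomial bound with $t$ a suitable fixed constant, using that $\mathbb{P}(L_1>t)$ is then a positive constant $<1$.)

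For \eqref{eq:bound 3}, on $\{T_p=n\}$ the quantity $L_p=\#\{i\le n:X_i=-1\}$ is $\mathrm{Bin}(n,\mujs(0))$-distributed, while on $\{T_p/L_p\ge K\}$ one has $L_p\le T_p/K$ and, since $L_p\ge p$, also $T_p\ge Kp$; hence for $K\ge 2/\mujs(0)$,
$$ \mathbb{P}(T_p/L_p\ge K)\ \le\ \sum_{n\ge Kp}\mathbb{P}\!\left(\mathrm{Bin}(n,\mujs(0))\le n/K\right)\ \le\ \sum_{n\ge Kp}\exp\!\left(-\tfrac{\mujs(0)}{8}\,n\right)\ \le\ c^{-1}\exp(-c\,Kp), $$
using $n/K\le\tfrac12\mujs(0)n$, the binomial Chernoff bound, and a geometric sum. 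For \eqref{eq:bound 2}, let $M_n=\sum_{i=1}^n(\Ind_{X_i=-1}-\mujs(0))$, a martingale with increments bounded by $1$, so that $M_{T_p}=L_p-\mujs(0)T_p$ and $L_p/T_p-\mujs(0)=M_{T_p}/T_p$. Splitting according to the dyadic block containing $T_p$, and noting that on $\{T_p\in[2^k,2^{k+1})\}$ the event forces $\max_{m<2^{k+1}}|M_m|\ge 2^k p^{-\alpha/4}$,
$$ \mathbb{P}\!\left(\left|\tfrac{L_p}{T_p}-\mujs(0)\right|\ge p^{-\alpha/4}\right)\ \le\ \mathbb{P}\!\left(T_p<p^{(\alpha+1)/2}\right)+\sum_{k:\,2^{k+1}>p^{(\alpha+1)/2}}\mathbb{P}\!\left(\max_{m<2^{k+1}}|M_m|\ge 2^k p^{-\alpha/4}\right). $$
The first term is $\le\exp(-c\sqrt p)$ by \eqref{eq:bound 1} applied with $\varepsilon=p^{-(\alpha-1)/2}$ (so that $\varepsilon p^\alpha=p^{(\alpha+1)/2}$ and $\varepsilon^{-1/(\alpha-1)}=\sqrt p$), and the Azuma--Hoeffding maximal inequality bounds the $k$-th summand by $2\exp(-2^{k-2}p^{-\alpha/2})\le 2\exp(-2^{k-k_0}\cdot\tfrac18\sqrt p)$, where $k_0$ is the smallest admissible index; summing the super-geometrically decaying terms yields $\le c^{-1}\exp(-c\sqrt p)$, which is stronger than stated.

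The step I expect to be the real obstacle is the second inequality in \eqref{eq:bound 1}: the exact exponent $\varepsilon^{-1/(\alpha-1)}$ comes out only for the right choice of the truncation level $t$ (equivalently, of the parameter $\lambda$ in the alternative Chernoff bound $\mathbb{P}(L_p\le m)\le e^{\lambda m}\,\mathbb{E}[e^{-\lambda L_1}]^p$), and it rests on the sharp two-sided tail $\mathbb{P}(L_1>t)\asymp t^{-1/\alpha}$, itself a consequence of the cycle lemma and a local limit theorem. Granting \eqref{eq:bound 1}, the estimate \eqref{eq:bound 2} follows by combining it with the martingale maximal inequality across dyadic scales, and \eqref{eq:bound 3} is a routine binomial large-deviation computation; the one residual nuisance is the handling of small $p$, which is absorbed into the choice of the constant $c$.
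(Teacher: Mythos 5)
Your proposal is correct, and the three estimates are handled in the right way, but for the key estimate \eqref{eq:bound 1} you take a genuinely different route from the paper. The paper tilts the walk: it introduces the exponential martingale $M_n=\exp(-\lambda S_n-\Psi(\lambda)\sum_{i\le n}\Ind_{X_i=-1})$, applies optional stopping at $T_p$ to get $\E[\exp(\lambda p-\Psi(\lambda)L_p)]\le 1$, bounds $\Psi(\lambda)\le c'\lambda^\alpha$ uniformly via an Abelian theorem, and then optimizes the Chernoff bound in $\lambda$. Your argument instead decomposes $L_p$ into $p$ i.i.d.\ copies of $L_1$ (the hitting time of $-1$ by the embedded skip-free walk $\widehat S_i=\sum_{l\le i}(U_l-1)$), derives the lower tail bound $\P(L_1>t)\ge c\,t^{-1/\alpha}$ from Kemperman's formula and a local limit theorem for $\widehat S$, and controls the number of pieces exceeding a truncation level $t$ by a binomial Chernoff bound. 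Both give the correct exponent $\varepsilon^{-1/(\alpha-1)}$, but the paper's Laplace-transform route is shorter and, importantly, sidesteps the local limit theorem entirely: your lower bound on $\P(L_1>t)$ must hold uniformly in $t\ge1$, which requires understanding the lattice/aperiodicity structure of $\widehat S$ (one knows $\P(\widehat S_1=-1)=\mujs(0)>0$ but the span of $\widehat S$ is not discussed in the paper and would need a small extra argument). That said, your route buys an appealing probabilistic interpretation---the number of ``large'' excursion blocks---which the tilting argument obscures. For \eqref{eq:bound 2} the paper uses a plain union bound over $\{T_p=n\}$ after cutting at $T_p<p^\theta$, with a standard Bernoulli Chernoff bound and then optimizes over $\theta$; your dyadic decomposition plus Azuma--Hoeffding maximal inequality is an equally valid alternative and even gives the slightly sharper bound $c^{-1}\exp(-c\sqrt p)$. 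Your proof of \eqref{eq:bound 3} coincides with the paper's; just be a bit more careful with the phrase ``on $\{T_p=n\}$ the quantity $L_p$ is $\mathrm{Bin}(n,\mujs(0))$-distributed''---what you use (and what the paper states) is the union bound $\P(L_p\le n/K,\,T_p=n)\le\P(\sum_{i\le n}\Ind_{X_i=-1}\le n/K)$, not a conditional distribution.
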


We finish the proof of Proposition~\ref{prop:1ere gen} given Lemma~\ref{lem:large deviation}. Equation \eqref{eq:bound 2} implies that $L_p/T_p$ converges to $\mujs(0)$ in probability, thus in distribution jointly with \eqref{eq:convdeuxtrucs}. Hence we have 
$$\mujs(0) \cdot \frac{p^\alpha}{L_p}\, \varphi \mb({ p^{-1} \boldX } \quad \xrightarrow[p\to\infty]{(d)} \quad \frac1\tau\, \varphi \mb({\Dzeta}.$$
On the other hand, \eqref{eq:bound 1} implies that the sequence $(p^\alpha L_p^{-1})_{p \ge 1}$ is uniformly integrable.
Therefore we can take expectations in the last convergence in distribution and it follows that
\begin{eqnarray*}
\frac{ \exptm{ L_p^{-1} \varphi\mB({p^{-1}\boldX}  }}{ \expt [L_p^{-1}]} = \frac{ \mujs(0)\cdot \exptm{p^\alpha L_p^{-1} \varphi\mB({p^{-1}\boldX} }}{ \mujs(0)\cdot \expt[ p^\alpha L_p^{-1}]}  & \cvg{}{p\to\infty} & \frac{ \exptm{ \tau^{-1} \varphi \mb({\Dzeta} }}{ \expt[\tau^{-1}]} = \expt [\varphi(Z_\alpha(i) : i \ge 1)].
\end{eqnarray*}
With \eqref{eq:apresluk} this finishes the proof of the proposition.
\end{proof} 

\begin{proof}[Proof of Lemma \ref{lem:large deviation}]
The first inequality in \eqref{eq:bound 1} follows from $L_p\le T_p$.
For the second inequality, consider for $\lambda >0$ the non-negative martingale 
$$M_n = \exp\mB({ -\lambda S_n - \Psi(\lambda) \sum_{i=1}^{n} \idd{X_i=-1} }$$
where $\Psi(\lambda)$ is defined by the equation $\exptm{\exp(- \lambda X_1 - \Psi(\lambda) \idd{X_1=-1})} = 1$, or explicitly by
$$	\Psi(\lambda) = -\log\mB({ 1-\frac{ \expt[e^{-\lambda X_1}] -1 }{e^\lambda \proba(X_1=-1)} }.$$ 
By Fatou's lemma, $\expt[M_{T_p}] = \expt[\exp(\lambda p-\Psi(\lambda)L_p)] \le 1$. Notice that since $ \mathbb{E}[e^{-\lambda X_{1}}] \geq  e^{-\lambda \mathbb{E}[X_{1}]}=1$, we always have $\Psi(\lambda) \geq 0$ as soon as $\lambda >0$. We can thus apply the Chernoff bound and get
\begin{equation}\label{eq:Chernoff}
		\proba(L_p\le \varepsilon p^\alpha)
\ \le\	\expt\mB[{ \exp\mb({\varepsilon p^\alpha \Psi(\lambda) -\Psi(\lambda) L_p} }
\ \le\	\exp\mb({ \varepsilon p^\alpha \Psi(\lambda) - \lambda p }.
\end{equation}
From our standing assumptions, we know that $X_1$ has the power law tail behavior $\prob(X_1\ge x) \sim C x^{-\alpha}$ ($x\to\infty$) and so by standard Abelian theorems its Laplace transform witnesses the following asymptotic:
$$\expt[e^{-\lambda X_1}] = 1 + C'\lambda^\alpha + o(\lambda^\alpha).$$
It follows that $\Psi(\lambda)\sim C''\lambda^\alpha$ as $\lambda\to 0$.
On the other hand, it is easy to see that $\Psi(\lambda)\sim\lambda$ when $\lambda\to\infty$.
Therefore there exists a constant $c'$ such that $\Psi(\lambda)\le c'\,\lambda^\alpha$ for all $\lambda>0$.
Then \eqref{eq:bound 1} follows from \eqref{eq:Chernoff} by taking $\lambda = c''(\varepsilon^{\frac{1}{\alpha-1}}p)^{-1}$ with $c''>0$ sufficiently small.

For \eqref{eq:bound 2}, observe that for all $\beta>0$ and $\theta \in (0, \alpha)$,
\begin{align*}
	\prob\mB({\, \mB|{ \frac{L_p}{T_p}-\mujs(0) } \ge p^{-\beta\alpha}}
&	=	\sum_{n=1}^\infty \prob\mB({\, \mB|{ 
			\frac1 n\sum_{i=1}^n \idd{X_i=-1} -\mujs(0)
		} \ge p^{-\beta\alpha}		\text{ and }	T_p=n }
\\&	\le \prob(T_p < p^\theta) + \sum_{n=p^\theta}^\infty
	\prob\mB({\, \mB|{ 
			\frac1 n\sum_{i=1}^n \idd{X_i=-1} -\mujs(0)
		} \ge p^{-\beta\alpha} }
\\&	\underset{(*)}\le \exp\mb({ - c\,p^\frac{\alpha-\theta}{\alpha-1}}
		+ \sum_{n=p^\theta}^\infty
		2\exp( -\tilde{c}\, n\, p^{-2\beta\alpha} )
\\&	\le\, \exp\mb({ - c\,p^\frac{\alpha-\theta}{\alpha-1}}
		+ 2\,\tilde{c}^{-1}\, p^{2\beta\alpha} 
		 \exp( -\tilde{c}\, p^{\theta-2\beta\alpha} ).
\end{align*}
where for $(*)$ we used \eqref{eq:bound 1} with $\varepsilon=p^{\theta-\alpha}$ and the standard Chernoff bound for i.i.d.\ Bernoulli random variables. 
The constant $\tilde{c}$ depends only on $\mu_0$.
We obtain \eqref{eq:bound 2} by taking $\beta=1/4$ and optimizing over $\theta$.

For \eqref{eq:bound 3}, we start by observing that $L_p\ge p$, therefore $T_p\ge pK$ on the event $\m.{T_p/L_p\ge K}$. Then using the same arguments as for \eqref{eq:bound 2}, we get for all $K\ge 2/\mujs(0)$,
\begin{align*}
	\prob\mB({\, \frac{T_p}{L_p} \ge K}
&	=	\sum_{n=pK}^\infty 
			\prob\mB({\, \frac1n \sum_{i=1}^n \idd{X_i=-1} \ge K^{-1} 
						 \text{ and }	T_p=n }
\\&	\le \sum_{n=pK}^\infty 
			\exp\m({ -\tilde{c}\, (\mujs(0)-K^{-1})\,n }
\ \le\ \, \frac{\exp\m({ -\frac12\tilde{c}\,\mujs(0)\, Kp }
			}{1-\exp\m({ -\frac12\tilde{c}\, \mujs(0)}}.
\qedhere
\end{align*}
\end{proof}

\section{A formula for left-continuous random walks}

In this section, we prove Theorem~\ref{th:rw} and an analogue of it for spectrally positive L\'evy processes. 

\subsection{Proof of Theorem \ref{th:rw}} \label{sec:kemperman}

Throughout the section, we denote by $S_n = X_1+ \cdots +X_n$ a left-continuous random walk on $\integer$  (that is, $(X_i)_{i\ge 1}$ are i.i.d.~with $X_i \ge -1$) and by $T_p$ the hitting time of $-p\in\Z$ by $S = (S_n)_{n\ge0}$.
In particular we have $S_{T_p} = -p$. The proof of Theorem \ref{th:rw} will make a heavy use of Kemperman's formula:
$$ \mbox{For all }n\ge 1 \mbox{ and }p\ge 1, 
\qquad \prob(T_p=n) = \frac p n \,\prob(S_n = -p).$$
See e.g.\ \cite[Section 6.1]{Pit06} (where there the notation $S_n$ stands for our $S_n-n$).
More precisely it follows from \cite[Lemma 6.1]{Pit06} that if $n\ge 1$ and $p\ge 1$, then for any positive measurable function $F(x_1,\ldots,x_n)$ which is invariant under cyclic permutation of its arguments, we have the extended Kemperman's formula:
\[	\expt[F(X_1, \cdots ,X_n) \idd{T_p=n}] 
=	\frac p n \expt[F(X_1, \cdots ,X_n)\idd{S_n=-p}]. \]

\begin{proof}[Proof of Theorem \ref{th:rw}] Let $n \ge 2$, we have
\begin{align*}
A_n &:= \exptm{\sum_{i=1}^n f(X_i)\idd{T_p=n}}\\
  &= \frac p n\,\exptm{\sum_{i=1}^n f(X_i)\idd{S_n=-p}} && \text{by extended Kemperman's formula}\\
  &= p\, \exptm{f(X_1)\idd{S_n=-p}}&& \text{by cyclic symmetry}\\
  &= p\, \exptm{f(X_1)\prob(S_{n-1}=-p-X_1\,|\,X_1)}&& \text{by Markov property}\\
  &= \exptm{f(X_1)\frac p {p+X_1} (n-1) \prob(T_{p+X_1} = n-1\,|\,X_1)} && \text{by Kemperman's formula}.
 \end{align*}
Since $p \ge 2$, we have $T_p \ge 2$ and $T_{p+x} \ge 1$ almost surely, for every $x\in\{-1,0,1,\ldots\}$. Hence,
\begin{align*}
 \E\m[{\frac 1 {T_p-1} \sum_{i=1}^{T_p} f(X_i)} 
  &= \sum_{n=2}^\infty \frac{A_n}{n-1}\\
  &= \E\m[{f(X_1)\frac p {p+X_1} \sum_{n=2}^\infty \P(T_{p+X_1} = n-1\,|\,X_1)}\\
  &= \E\m[{f(X_1)\frac p {p+X_1}},
\end{align*}
where in the penultimate line we used the fact that $T_p<\infty$ almost surely to deduce that the sum inside the expectation is equal to $1$. This completes the proof of the theorem.
\end{proof}  

The theorem has the following generalization, whose proof is an easy extension of the above proof and left to the reader.
\begin{proposition} 
\label{prop:rw_extension}
Suppose that $(S_n)_{n\ge0}$ does not drift to $+\infty$. 
Let $m\in\N$, $f:\integer^m\to\real_+$ and $g:\bigcup_{j=1}^\infty \Z^j \to \R_+$ be symmetric measurable functions.
Then for any $p\ge 1$ we have
\begin{multline*}
\exptm{ \frac{\idd{T_p>m}}{(T_p-1)\cdots(T_p-m)} 
		\sum_{(i_1,\ldots,i_p)\in\mathcal{A}_{T_p}^m} f(X_{i_1},\cdots,X_{i_m})  g((X_j)_{j\notin\{i_1,\ldots,i_p\},\,j\le T_p} )}\\
=	\exptm{ f(X_1,\cdots,X_m) \frac{p\,\idd{T_p>m}}{p+X_1+\cdots+X_m} \E[g((X_j)_{j\le T_{p+X_{1}+\cdots+X_{p}}})] }
\end{multline*}
where $\mathcal{A}^m_{T_p}$ is the set of all ordered $m$-tuples of distinct elements ($m$-arrangements) from $\{1,\cdots,T_p\}$.
\end{proposition}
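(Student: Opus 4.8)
The plan is to mimic the proof of Theorem~\ref{th:rw} step by step, but carrying along the extra symmetric functional $g$ of the ``leftover'' increments and replacing a single marked index by an $m$-arrangement of marked indices. First I would apply the extended Kemperman's formula of \cite[Lemma 6.1]{Pit06} to the left-hand side: the functional
$$ F(x_1,\ldots,x_n) = \frac{\idd{n>m}}{(n-1)\cdots(n-m)} \sum_{(i_1,\ldots,i_m)\in\mathcal{A}_n^m} f(x_{i_1},\cdots,x_{i_m})\, g\bigl((x_j)_{j\notin\{i_1,\ldots,i_m\},\,j\le n}\bigr) $$
is invariant under cyclic permutations of its arguments (indeed under all permutations, since we sum over all $m$-arrangements and $g$ is symmetric), so summing $\E[F(X_1,\ldots,X_n)\idd{T_p=n}]$ over $n$ and using Kemperman gives $\sum_n \frac{p}{n}\E[F(X_1,\ldots,X_n)\idd{S_n=-p}]$. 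Next, by the symmetry of $F$ under permuting $(X_1,\ldots,X_n)$, each of the $n(n-1)\cdots(n-m+1)$ ordered $m$-tuples contributes equally, so I can fix the marked indices to be $1,\ldots,m$ at the cost of a combinatorial factor; this turns the sum-over-arrangements into $\frac{n(n-1)\cdots(n-m+1)}{(n-1)\cdots(n-m)} f(X_1,\ldots,X_m)\, g((X_j)_{m<j\le n})$, and the prefactors telescope nicely down to $\frac{n}{n-m}\cdot\frac{1}{n}\cdot p$ times the rest — so I should be careful to check that the powers of $n$ genuinely cancel against the $(n-1)\cdots(n-m)$ in the denominator (this is the one place where a small index bookkeeping slip is easy).

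Then I would condition on $(X_1,\ldots,X_m)$ and apply the Markov property at time $m$: writing $s=X_1+\cdots+X_m$, the event $\{S_n=-p\}$ becomes $\{S_{n-m}' = -p - s\}$ for an independent copy $S'$ of the walk, and the leftover increments $(X_j)_{m<j\le n}$ are exactly the increments of $S'$ up to time $n-m$. Summing over $n$ then recognizes $\sum_{n} \frac{1}{n-m}\P(S'_{n-m}=-(p+s))\,(\text{stuff})$; applying Kemperman's formula in reverse to $S'$ with target level $p+s$ converts $\frac{1}{n-m}\P(S'_{n-m}=-(p+s))$ into $\frac{1}{p+s}\P(T'_{p+s}=n-m)$, and crucially reassembles the conditional expectation of $g$ over the walk run up to its hitting time $T'_{p+s}$. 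Summing the resulting probabilities over $n$ gives $1$ provided $p+s\ge 1$ and $T'_{p+s}<\infty$ a.s., i.e. on the event $\{T_p>m\}$ (which forces $X_1+\cdots+X_m > -p$, hence $p+s\ge 1$) together with the standing hypothesis that $S$ does not drift to $+\infty$. Collecting the surviving factors $\frac{p}{p+s}$ and the indicator $\idd{T_p>m}$ yields exactly the right-hand side.

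The main obstacle I anticipate is purely combinatorial rather than probabilistic: correctly tracking how the sum over the $m$-arrangement $\mathcal{A}_n^m$, the denominator $(n-1)\cdots(n-m)$, and the Kemperman prefactors $\frac{p}{n}$ combine so that everything cancels to leave precisely $\frac{p}{p+s}$ with no residual $n$-dependence. One has to verify that after fixing the marked indices the multiplicity is $n^{\underline m} = n(n-1)\cdots(n-m+1)$, that $n^{\underline m}/[n\cdot(n-1)\cdots(n-m)] = 1/(n-m)$, and that this is exactly the factor Kemperman's formula needs in order to produce $\P(T'_{p+s}=n-m)$ summing to $1$. Once that identity is pinned down, the only analytic input is the finiteness of the hitting time under the no-drift-to-$+\infty$ assumption, which is already available; hence the proposition genuinely is ``an easy extension'' of Theorem~\ref{th:rw}, modulo this bookkeeping, and I would present it at that level of detail.
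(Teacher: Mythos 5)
Your overall strategy is exactly the ``easy extension'' the paper has in mind: extended Kemperman on $(X_1,\dots,X_n)$ with the fully symmetric functional, exchangeability to peel off the $m$-arrangement, Markov at time $m$, then Kemperman in reverse for the residual walk $S'$ with target $p+s$, $s=X_1+\dots+X_m$. The combinatorial bookkeeping you flag as the risky step is in fact fine: the $n^{\underline m}=n(n-1)\cdots(n-m+1)$ arrangements, divided by $(n-1)\cdots(n-m)$ and multiplied by the Kemperman factor $p/n$, do telescope to $p/(n-m)$ with no residual $n$. That part is correct.

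The genuine gap is at the place where you write that the reverse Kemperman ``gives $1$ provided $p+s\ge 1$ \dots\ i.e.\ on the event $\{T_p>m\}$'' and then ``collect \dots\ the indicator $\idd{T_p>m}$.'' After the \emph{first} application of Kemperman, the constraint $\{T_p>m\}$ is gone: you are left with $\sum_{k\ge 1}\frac p k\,\E\bigl[f(X_1,\dots,X_m)\,\E'[g((X'_j)_{j\le k})\idd{S'_k=-(p+s)}]\bigr]$, an unconditional integral over $(X_1,\dots,X_m)$. The event $\{T_p>m\}$ and the condition $p+s\ge 1$ are \emph{not} equivalent (the walk may dip below $-p$ before time $m$ and climb back), and more to the point the post-Kemperman expression is not restricted to either. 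The reverse Kemperman only applies to those $(X_1,\dots,X_m)$ with $p+s\ge 1$; the contributions from $p+s\le 0$ are left untouched by your argument and, when $p\le m$, they do not vanish. Concretely, take the simple random walk $X_i\in\{-1,1\}$, $p=m=1$, $f=g=1$. The left-hand side is $\E[\tfrac{T_1}{T_1-1}\idd{T_1\ge 2}]=\tfrac12\E[1+\tfrac1{T'_2}]\approx 0.60$, while the right-hand side is $\E[\tfrac{\idd{X_1\ge0}}{1+X_1}]=\tfrac14$. So the statement as literally printed (``for any $p\ge 1$'') is false, and your proof inherits that error because it quietly assumes the indicator survives Kemperman.

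The fix is simply to require $p>m$ (a condition under which Theorem~\ref{th:rw}'s hypothesis $p\ge 2$ with $m=1$ is recovered). Then $s\ge -m>-p$ forces $p+s\ge 1$ for \emph{all} values of $(X_1,\dots,X_m)$, so the reverse Kemperman applies unconditionally and the sum $\sum_{k\ge 1}\P(T'_{p+s}=k)=\P(T'_{p+s}<\infty)=1$ by the no-drift hypothesis. Under $p>m$ both indicators $\idd{T_p>m}$ become trivially $1$ (since $T_p\ge p$) and the identity holds; your proof, read with that hypothesis, is complete. You should make the requirement $p>m$ explicit rather than trying to ``carry the indicator through,'' because it does not come through.
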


\subsection{Passing to the limit: An analogous formula for L\'evy processes}
Let now $ (\eta_t)_{t\ge0}$ be a L\'evy process with no negative jumps started from $\eta_0 = 0$. Denote by $\pi( \dd x)$ its L\'evy measure (supported on $ \real_+$) and by $\tau$ the hitting time of $-1$. We are interested in the mean intensity measure of the jumps $\Delta \eta_{t} := \eta_{t} - \eta_{t-}$ of $\eta$ up to time $\tau$. 
\begin{proposition}
\label{prop:levy} Suppose that $\tau<\infty$ almost surely. 
Let $f:\R_+^*\to \R$ be non-negative, measurable and such that $f(0)=0$. Then we have
 \[
  \E\m[{\frac 1 {\tau} \sum_{t \leq \tau} f( \Delta \eta_{t}) } = \int f(x)\frac{1}{1+x}\pi( \dd x).
 \]
\end{proposition}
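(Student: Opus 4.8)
The plan is to obtain Proposition~\ref{prop:levy} as a scaling/weak limit of Theorem~\ref{th:rw}, exactly in the spirit of the transition from the discrete random-walk coding to the stable L\'evy process carried out in Section~\ref{sec:firstgene}. First I would reduce to the case where $\eta$ is an $\alpha$-stable spectrally positive L\'evy process, since the general statement follows by the same argument and, more importantly, because that is the only case needed in the sequel; alternatively one can run the whole argument for a general no-negative-jump L\'evy process not drifting to $+\infty$ by approximating it with left-continuous random walks whose step law lies in its domain of attraction. Concretely, I would pick i.i.d.\ integer steps $X_i \ge -1$ so that $\frac1n S_{[tn^\gamma]} \Rightarrow \eta_t$ in the Skorokhod topology for the appropriate scaling exponent $\gamma$ (for the $\alpha$-stable case $\gamma=\alpha$), with $T_p$ the hitting time of $-p$; then $p^{-\gamma}T_p \Rightarrow \tau$ and the rescaled jumps of $S$ up to $T_p$ converge to the jumps of $\eta$ up to $\tau$, jointly, just as in \eqref{eq:convdeuxtrucs}.

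Next I would rewrite Theorem~\ref{th:rw} in a form amenable to passing to the limit. Take $f$ as in the statement, assume first that $f$ is bounded, continuous and supported in $[\delta,\infty)$ for some $\delta>0$ (so in particular $f(0)=0$), and apply Theorem~\ref{th:rw} to the test function $x\mapsto f(x/p)$ on the left and the same on the right:
\begin{equation*}
 \exptm{\frac{1}{T_p-1}\sum_{i=1}^{T_p} f\!\mB({\tfrac{X_i}{p}}} = \exptm{f\!\mB({\tfrac{X_1}{p}}\,\frac{p}{p+X_1}}.
\end{equation*}
On the left-hand side, $T_p-1 \sim p^\gamma\tau$ after rescaling, and $\sum_{i\le T_p} f(X_i/p)$ is a continuous bounded functional of the rescaled jump sequence (the support condition kills the near-zero jumps, so only finitely many terms contribute and the functional is continuous on $\ell^\infty$), so the left-hand side, multiplied by the right power of $p$, converges to $\exptm{\tau^{-1}\sum_{t\le\tau} f(\Delta\eta_t)}$ provided one has the requisite uniform integrability — which is furnished by the large-deviation bound \eqref{eq:bound 1} exactly as in the proof of Proposition~\ref{prop:1ere gen} (the factor $p^\gamma/T_p$ is uniformly integrable). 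On the right-hand side, $\frac{p}{p+X_1} = \frac{1}{1+X_1/p}$, and $\expt$ of $f(X_1/p)\frac{1}{1+X_1/p}$ against the law of $X_1$ converges, after the correct normalization of the step law, to $\int f(x)\frac{1}{1+x}\,\pi(\dd x)$ by the same Abelian/tail argument used for the Laplace transform in the proof of Lemma~\ref{lem:large deviation} (the tail $\P(X_1\ge x)\sim Cx^{-\alpha}$ rescales to the L\'evy measure $\pi(\dd x)$; the bounded continuous factor $f(x)/(1+x)$ passes through). Matching the two sides gives the identity for such $f$.

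Finally I would remove the restrictions on $f$ by monotone approximation: an arbitrary non-negative measurable $f$ with $f(0)=0$ is an increasing limit of bounded continuous functions supported away from $0$, so the monotone convergence theorem applied to both sides of the identity — the left side being a sum of non-negative terms, the right side an integral against the non-negative kernel $\frac{1}{1+x}\pi(\dd x)$ — yields the general case, with both sides possibly infinite. I expect the main obstacle to be the uniform-integrability/tightness bookkeeping in the limit on the left-hand side: one must ensure simultaneously that the small jumps contribute negligibly (handled by the cutoff at $\delta$, then letting $\delta\to0$ by monotone convergence, using finiteness of $\int_\delta^\infty \frac{1}{1+x}\pi(\dd x)$ at the end) and that the $1/(T_p-1)$ factor does not create an unbounded contribution from the event that $T_p$ is atypically small — precisely the role of \eqref{eq:bound 1}. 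Everything else is a routine repackaging of the convergences already established in Section~\ref{sec:firstgene} together with standard Abelian theorems for heavy-tailed laws. (If one prefers to avoid the approximation entirely in the stable case, one can instead verify the identity by a direct computation: both sides are explicit functionals of the stable L\'evy measure $\pi(\dd x)=c\,x^{-\alpha-1}\dd x$, and $\exptm{\tau^{-1}\sum_{t\le\tau}f(\Delta\eta_t)}$ can be computed via the compensation formula together with the known density of $\tau$ and the strong Markov property at jump times — but the approximation route is cleaner and is the one I would write up.)
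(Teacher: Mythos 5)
Your proposal follows essentially the same route as the paper: approximate $\eta$ by left-continuous random walks $S^{(n)}$ with $a_n^{-1}S^{(n)}_{\lfloor nt\rfloor}\Rightarrow\eta_t$ in the Skorokhod topology (so that $n\,\P(X_1^{(n)}\ge x a_n)\to\pi((x,\infty))$), apply Theorem~\ref{th:rw} to the rescaled test function, pass to the limit on both sides, and finish with a monotone class argument. That is exactly the paper's proof.

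The one place where your sketch is too loose is the uniform integrability on the left-hand side. You correctly flag it as the main obstacle, but then say it is ``furnished by the large-deviation bound \eqref{eq:bound 1} exactly as in the proof of Proposition~\ref{prop:1ere gen} (the factor $p^\gamma/T_p$ is uniformly integrable)''. That is not enough: in Proposition~\ref{prop:1ere gen} the test function $\varphi$ is \emph{bounded}, so UI of $p^\alpha/L_p$ alone does the job, but here $\sum_{i\le T_p}f(X_i/p)$ is \emph{unbounded} (it counts the number of jumps of size $\ge\delta p$ up to $T_p$, which grows when $T_p$ is atypically large). The paper resolves this by bounding the \emph{second} moment of $\frac{n}{T_{a_n}}\sum_{i\le T_{a_n}}f(a_n^{-1}X_i^{(n)})$: after dominating $f\le\idd{x\ge 1}$, the jump count $N_k^{(n)}$ is stochastically dominated by a Poisson process $(Y_{k/n})$, one then uses $\exptm{(\sup_{t\ge1}t^{-1}Y_t)^2}<\infty$ to handle $T_{a_n}>n$, and Cauchy--Schwarz together with the estimate $\exptm{(n/T_{a_n})^4\idd{T_{a_n}\le n}}<\infty$ (from the arguments of Lemma~\ref{lem:large deviation}, of which \eqref{eq:bound 1} is a part) to handle $T_{a_n}\le n$. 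So your outline is right, but to close the argument you need this extra Poisson-domination and second-moment step, not just UI of $p^\alpha/T_p$.
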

\begin{remark} Notice that, somehow surprisingly, the drift and the Brownian component of the L\'evy process do not appear explicitly in the result (as long as the L\'evy process does not drift to $\infty$). However, they do affect the distribution of $\tau$ and of the jumps until time $\tau$.

Also note that Proposition~\ref{prop:levy} admits an obvious extension analogously to Proposition~\ref{prop:rw_extension}.
\end{remark}

\begin{proof}
We could of course adapt the proof of Theorem~\ref{th:rw} to the current setting (in the spirit of \cite{BCP04-FPbridges}) however, we find it shorter to simply argue by approximation. Let $S^{(n)}_k = X_{1}^{(n)}+ \cdots + X_k^{(n)}$ be a sequence of left-continuous random walks and $(a_n)_{n\ge0}$ a sequence of positive integers, such that we have 
 \begin{eqnarray} \label{eq:cvdiscret-continu}\m({a_n^{-1} S^{(n)}_{\lfloor nt \rfloor}}_{t \geq 0} \quad \xrightarrow[n\to\infty]{(d)} \quad (\eta_{t})_{t \geq 0}
  \end{eqnarray}
in distribution in the Skorokhod topology. In particular this means that $ n \prob(X^{(n)}_{1} \geq x a_{n}) \to \pi( (x, \infty))$ for all $x$ which is not an atom of $\pi$. Note also that it is always possible to perform such an approximation in such a way that the walk $S^{(n)}$ does not drift towards $\infty$. For any continuous function $f$ on $\R_+^*$ with compact support, we then have (where the equality $(*)$ is justified just below)
\begin{align*}
\exptm{\frac 1 {\tau} \sum_{t\le \tau} f(\Delta \eta_t) } & \underset{(*)}{=} \lim_{n\to\infty} \exptm{\frac n {T_{a_n}} \sum_{i=1}^{T_{a_n}} f\m({a_n^{-1} X\0n_i}}\\
&\underset{ \mathrm{Thm.} \ref{th:rw}}{=}  \lim_{n\to\infty} n \exptm{f(a_n^{-1} X_1)\frac{a_n}{a_n+X_1}}\\
&=  \int f(x)\frac{1}{1+x} \pi(\dd x).
\end{align*}
The statement then follows by a monotone class argument.
In order to justify $(*)$ one can first invoke the Skorokhod embedding theorem and assume that \eqref{eq:cvdiscret-continu} holds almost surely. It then follows from standard arguments that $\frac n {T_{a_n}} \sum_{i=1}^{T_{a_n}} f\m({a_n^{-1} X\0n_i} \to \frac 1 {\tau} \sum_{t\le \tau} f(\Delta \eta_t)$ in distribution as $n \to \infty$. It thus remains to prove uniform integrability in order to allow convergence of the expectations. Without loss of generality, we can assume that $f$ is supported in $[1,\infty)$ and bounded by $1$, that is, $f\le \idd{x\ge 1}$. 
Define $N\0n_k=\#\m.{1\le i\le k: X\0n_i\ge a_n}$,
then we can write
$$\exptm{ \m({ \frac{n}{T_{a_n}\!} \sum_{i=1}^{T_{a_n}} f\m({a_n^{-1} X\0n_i}}^2 } 
\,\le\ \exptm{ \m({ \frac{n}{T_{a_n}\!\!} N\0n_{T_{a_n}} }^2 }
\,\le\ \exptm{ \m({ \frac{n}{T_{a_n}\!\!} N\0n_n }^2\! \idd{T_{a_n}\le n} }
		+ \exptm{ \m({ \sup_{k\ge n}\frac nkN\0n_k }^2 }
$$
Since $\prob(X\0n_1\ge a_n)$ is of order $1/n$, we can choose $\lambda$ large enough so that $\prob(X\0n_1\ge a_n)\le 1-\exp(-\lambda/n)$ for all $n$.
Then the process $(N\0n_k)_{k\ge 1}$ is stochastically bounded by $(Y_{k/n})_{k\ge 1}$, where $Y$ is a standard Poisson process of intensity $\lambda$.
Easy estimates show that $\exptm{(\sup_{t\ge 1}t^{-1}Y_t)^2}<\infty$, which gives a uniform bound to the second term on the right-hand side of the last display.
For the first term, we apply Cauchy-Schwarz inequality to get that
$$\exptm{ \m({\frac n{T_{a_n}} N\0n_n}^2 \idd{T_{a_n}\le  n} }
\,\ \le\,\ \m({ \expt[Y_1^4] \cdot \exptm{ \m({\frac n {T_{a_n}}}^4 \idd{T_{a_{n}}\le n} } }^{1/2}.$$
Using estimates similar to those of Lemma \ref{lem:large deviation} we deduce that $\exptm{ (n/T_{a_n})^4 \idd{T_{a_{n}}\le  n}}$ is bounded uniformly in $n$. Gathering the estimates we deduce that $\exptm{ \m({ \frac n{T_{a_n}} \sum_{i=1}^{T_{a_n}} f\m({a_n^{-1} X\0n_i} }^2 }$ is bounded uniformly in $n$. This gives the desired uniform integrability.
\end{proof}

\section{Properties of the limiting multiplicative cascade}
\label{sec:properties}

In this section, we examine in detail the multiplicative cascade $Z_\alpha$ defined in the introduction. The most important quantity of a multiplicative cascade, which determines much of its asymptotic behavior, is the Biggins transform. We calculate this transform in Section~\ref{sec:biggins_transform}, relying on the formula for L\'evy processes proved in the previous section (Proposition~\ref{prop:levy}). This allows to define additive martingales which we make explicit. We also calculate the Legendre--Fenchel transform of the (log-)Biggins transform, which describes the asymptotic growth of the multiplicative cascade. In Section~\ref{sec:volume}, we show that the Malthusian martingale of the multiplicative cascade is uniformly integrable and calculate the law of its limit. We conjecture this law to be the asymptotic law of the renormalized volume of the $O(n)$-decorated quadrangulations considered in this paper. Finally, in Section~\ref{sec:Lp}, we study $L^p$-convergence of the additive martingales.

\subsection{The Biggins transform and additive martingales}
\label{sec:biggins_transform}

\begin{figure}[h]
 \begin{center}
 \includegraphics[width=13cm]{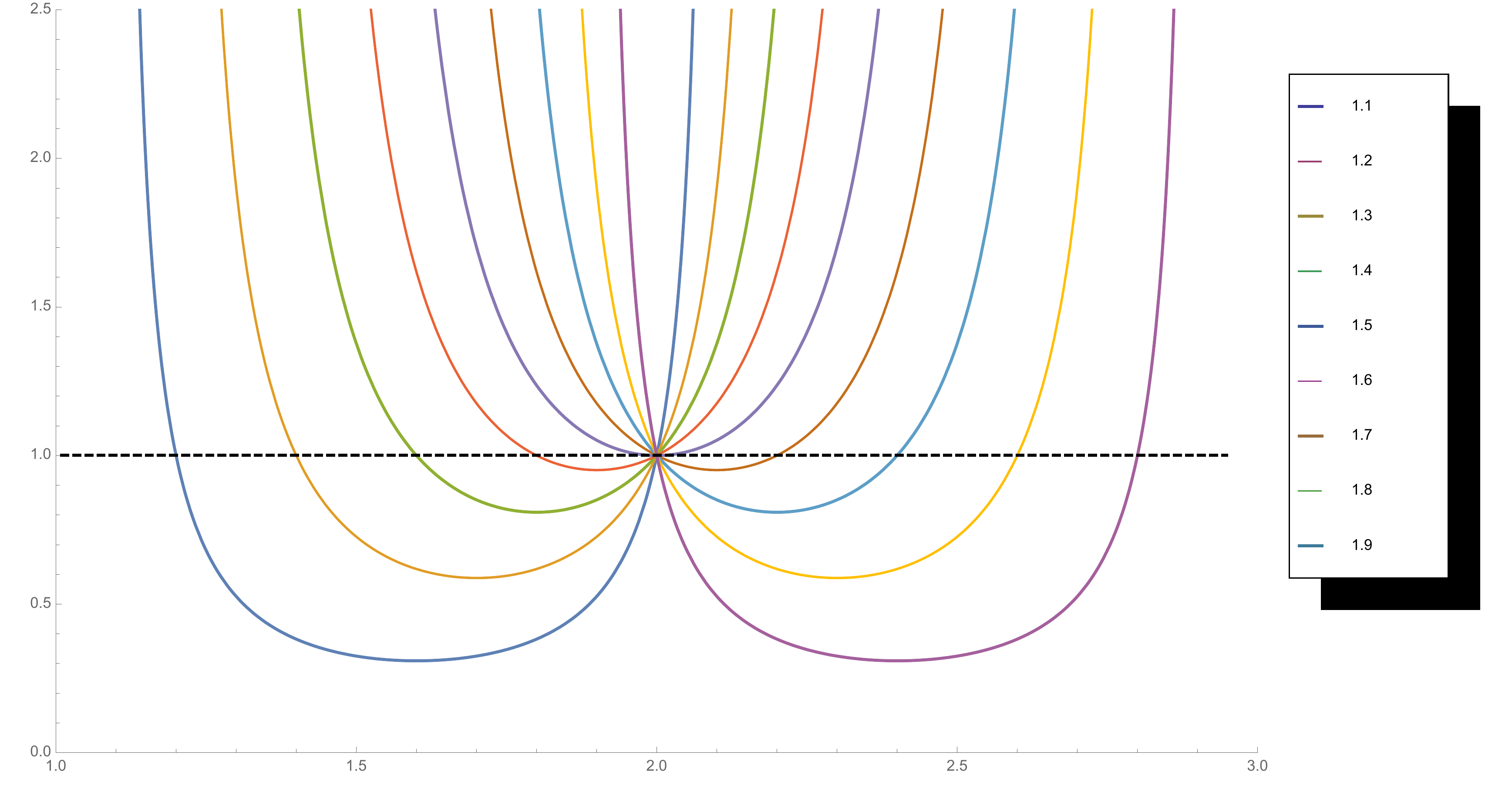}
 \caption{ \label{fig:plotage}The various functions $\phi_\alpha(\theta)$ for $\alpha = 1.1,\,1.2,\ldots,\,1.9$, and the line $y=1$ (dashed).}
 \end{center}
 \end{figure}

\label{sec:calculstable}
In this section we prove the formula for the Biggins transform of the measure $\nu_\alpha$ (see Fig.~\ref{fig:plotage} for a plot for different values of $\alpha$):
\begin{equation}\label{eq:biggins_transform}
\phi_\alpha(\theta) = \exptm{ \sum_{i=1}^\infty \mb({Z_\alpha(i)}^\theta } =  \frac{ \sin(\pi(2-\alpha))}{ \sin( \pi (\theta-\alpha))} \quad \mbox{ for }\theta \in (\alpha,\alpha+1).
\end{equation}

\begin{proof}[Proof of \eqref{eq:biggins_transform}]
We use Proposition~\ref{prop:levy} when $\eta = \zeta$ is a spectrally positive $\alpha$-stable L\'evy process for $\alpha\in(1,2)$. In this case recall that we have $ \expt [e^{-\lambda \zeta}] = \exp(C \lambda^\alpha)$ and the L\'evy measure of $\zeta$ is equal to 
$$ \pi( \dd x) =  \frac{C}{ \Gamma(-\alpha)} \frac{ \dd x}{x^{\alpha+1}} \mathbf{1}_{(x>0)}.$$
Applying the above-mentioned proposition we deduce that 
$$  \E\m[{\frac 1 \tau \sum_{t \leq \tau} ( \Delta \zeta_{t})^\theta } = \int \frac{x^\theta}{1+x}\pi( \dd x) = \frac{C}{\Gamma(-\alpha)} \int_0^\infty \frac{x^{\theta-\alpha-1}}{x+1} \dd x =   \begin{cases}
  \frac{C \pi}{\Gamma(-\alpha)\sin(\pi(\theta-\alpha))} & \tif \theta\in(\alpha,\alpha+1)\\
 +\infty & \text{ otherwise.}
 \end{cases}$$
For the last equality, see e.g.\ \cite[6.2(3)]{IntegralBook}.
Moreover, it is classical (see e.g.\,\cite[Chapter~VII, Theorem~1]{Ber96}) that for $\lambda \geq 0$ we have 
$\E[e^{-\lambda \tau}] = e^{-(\lambda/C)^{1/\alpha}}$ and so 
\begin{equation}
 \label{eq:T_biggins_transform}
 \E[1/\tau] = \E\m[{\int_0^\infty e^{-\lambda \tau}\,d\lambda} = \int_0^\infty e^{-(\lambda/C)^{1/\alpha}}\, \mathrm{d}\lambda = C\,\Gamma(1+\alpha).
\end{equation}
Combining the last two displays  with Euler's reflection formula we indeed compute the Biggins transform of the measure $\nu_\alpha$ as promised
$$ \frac{\E\m[{\frac 1 \tau \sum_{t \leq \tau} (\Delta \zeta_{t})^\theta } }{ \expt[1/\tau]} = \begin{cases}
  \frac{\sin (\pi(2-\alpha)) }{\sin (\pi(\theta-\alpha))} & \tif \theta\in(\alpha,\alpha+1)\\
 +\infty & \text{ otherwise.}
 \end{cases}$$
 We see that the above result does not depend on the normalizing constant $C$ appearing in the definition of $\zeta$. As we already remarked in the introduction, this can more directly be seen using a scaling argument to show that the law of $\nu_\alpha$ is independent of $C$.
\end{proof}

\paragraph{Additive martingales.} Consider the following family of processes, indexed by $\theta\in(\alpha,\alpha+1)$,
\begin{equation}
\label{eq:martingale}
W_n(\alpha,\theta) = \phi_\alpha(\theta)^{-n} \sum_{ u : |u|=n} (Z_\alpha(u))^\theta.
\end{equation}
It is well-known and easy to show that each of these processes is a non-negative martingale with respect to the $\sigma$-field $\F_n = \sigma(Z_\alpha(u),\,|u|\le n)$. Since for each $n\in\N$, $W_n(\alpha,\theta)$ is an additive functional of $(Z_\alpha(u))_{|u|=n}$, they are also called \emph{(the) additive martingales} of the multiplicative cascade $Z_\alpha$. Of special importance is the so-called \emph{Malthusian martingale} corresponding to the \emph{Malthusian parameter} $\theta_\alpha$ which is the smaller solution of the equation $\phi_\alpha(\theta) = 1$. One easily checks that for $\alpha\in(1,2)\backslash \{3/2\}$, there are exactly two solutions to this equation, namely $2$ and $2\alpha-1$, and so the Malthusian parameter equals
\begin{equation}
\theta_\alpha = \min(2,2\alpha-1) = \begin{cases} 2 & \text{if $\alpha > 3/2$ \quad (dilute case)}\\ 2\alpha-1 & \text{if $\alpha < 3/2$ \quad (dense case)} \label{avantrek}\end{cases}.
\end{equation}

In particular we deduce that for $\theta \in [ (2\alpha-1) \wedge 2 ,(2 \alpha-1) \vee 2]$, which  is a non-empty interval as soon as $\alpha \ne 3/2$, the multiplicative cascade $ Z_{\alpha}$ satisfies 
$$  \mathbb{E}\left[\sum_{u \in \mathcal{U}} (Z_{\alpha}(u))^{\theta} \right] = \sum_{k=0}^{\infty} (\phi_{\alpha}(\theta))^{k} < \infty,$$
in particular $(Z_{\alpha})$ belongs to $\ell^{\theta}( \mathcal{U})$ almost surely.

In Section~\ref{sec:volume}, we prove that the Malthusian martingale  $(W_n(\alpha,\theta_\alpha))_{n\ge 0}$ is uniformly integrable and identify the law of the limit. We also explain why this limit law should give the scaling limit of the volume of the $O(n)$-decorated quadrangulation with a boundary.
In Section~\ref{sec:Lp}, we provide moment bounds on $W_1(\alpha,\theta)$, which allow to prove convergence in $L^p$ of $W_n(\alpha,\theta)$ for suitable $p$ and $\theta$. 

\begin{remark} In the critical case $\alpha = 3/2$---which we do not consider in this paper---the equation $\phi_{3/2}(\theta) = 1$ has only one solution $\theta = 2$. It is well-known that in this case, the martingale $W_n(\alpha,2)$ converges to $0$ almost surely, but one can still get a non-trivial limit either by considering the so-called \emph{derivative martingale} \cite{Kyprianou1998} or by renormalizing the martingale $W_n(\alpha,2)$ appropriately \cite{AidekonShi}, the two approaches leading to the same result. 
\end{remark}

For completeness, we note that the Legendre--Fenchel transform of $\log \phi_\alpha$ can also be explicitly evaluated (we leave the calculation to the reader). This allows to determine the asymptotic growth of the multiplicative cascade, see Biggins~\cite{Biggins1979} for further details. 
\begin{proposition}
Denote by $\arccot$ the branch of the arccotangent taking values in $(0,\pi)$. For $x\in\R$,
 \begin{equation}
  \label{eq:log-biggins_transform}
  (\log \phi_\alpha)^*(x) := \sup_{\theta\in\R} \{\theta x - \log \phi_\alpha(\theta)\} = \alpha x + \tfrac x \pi \arccot(-\tfrac x \pi) - \tfrac 1 2 \log(1+\tfrac {x^2}{\pi^2}) - \log \sin(\pi(2-\alpha)).
 \end{equation}
 This function is strictly increasing and its (unique) root is negative if $\alpha \ne 3/2$ and $0$ if $\alpha = 3/2$.
\end{proposition}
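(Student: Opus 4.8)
The plan is to compute the supremum in the definition of $(\log \phi_\alpha)^*$ explicitly. Since $\log \phi_\alpha(\theta) = +\infty$ for $\theta \notin (\alpha,\alpha+1)$, the supremum ranges only over $\theta \in (\alpha,\alpha+1)$, where by \eqref{eq:biggins_transform} we have $\log \phi_\alpha(\theta) = \log \sin(\pi(2-\alpha)) - \log \sin(\pi(\theta-\alpha))$. Substituting $s = \theta - \alpha \in (0,1)$ and pulling out the affine and constant parts gives
$$(\log \phi_\alpha)^*(x) = \alpha x - \log \sin(\pi(2-\alpha)) + \sup_{s\in(0,1)}\bigl(sx + \log \sin(\pi s)\bigr).$$
The function $s \mapsto sx + \log \sin(\pi s)$ is concave on $(0,1)$ (its second derivative is $-\pi^2 \sin^{-2}(\pi s)$) and tends to $-\infty$ at both endpoints, so its supremum is attained at the unique critical point, where $x + \pi \cot(\pi s) = 0$, i.e.\ at $s = s^*(x) := \tfrac 1\pi \arccot(-x/\pi)$ with the branch of $\arccot$ valued in $(0,\pi)$ (note $s^*(x) \in (0,1)$ automatically). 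Using the elementary identity $\sin(\arccot y) = (1+y^2)^{-1/2}$ valid for $\arccot \in (0,\pi)$, one gets $\log \sin(\pi s^*(x)) = -\tfrac 12 \log(1 + x^2/\pi^2)$, and substituting $s^*(x)$ back yields the claimed closed form for $(\log \phi_\alpha)^*(x)$.

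For strict monotonicity I would differentiate the explicit formula: the contributions of $\tfrac x\pi \arccot(-x/\pi)$ and of $-\tfrac 12 \log(1+x^2/\pi^2)$ cancel except for one term, leaving
$$\frac{d}{dx}(\log \phi_\alpha)^*(x) = \alpha + \tfrac 1\pi \arccot(-x/\pi) \in (\alpha, \alpha+1) \subset (0,\infty),$$
(alternatively, by the envelope theorem this derivative equals the maximizing $\theta^*(x) = \alpha + s^*(x) \in (\alpha,\alpha+1)$). Hence $(\log\phi_\alpha)^*$ is strictly increasing, so it has at most one root. Since $\arccot(-x/\pi) \to 0$ as $x \to -\infty$ and $\arccot(-x/\pi) \to \pi$ as $x \to +\infty$, one checks from the closed form that $(\log\phi_\alpha)^*(x) \to -\infty$ as $x \to -\infty$ (the $-\tfrac12\log(1+x^2/\pi^2)$ term dominates) and $(\log\phi_\alpha)^*(x) \to +\infty$ as $x \to +\infty$; by continuity a unique root exists.

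Finally, to locate the root I would evaluate at $x = 0$: every term except the constant vanishes, giving $(\log \phi_\alpha)^*(0) = -\log \sin(\pi(2-\alpha))$. Since $2-\alpha \in (0,1)$ we have $\sin(\pi(2-\alpha)) \in (0,1]$, with equality to $1$ precisely when $2-\alpha = \tfrac12$, i.e.\ $\alpha = \tfrac32$. Thus $(\log\phi_\alpha)^*(0) \ge 0$, with equality iff $\alpha = \tfrac32$, and combined with strict monotonicity this shows the unique root is $0$ when $\alpha = \tfrac32$ and strictly negative otherwise. I do not expect a genuine obstacle in this argument; the only points needing a little care are justifying that the critical point of $s \mapsto sx + \log\sin(\pi s)$ is indeed the global maximizer (immediate from concavity and the boundary behaviour) and bookkeeping the branch of $\arccot$ and the sign of $\sin(\pi(2-\alpha))$.
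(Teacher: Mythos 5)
Your computation is correct and is exactly the "calculation left to the reader" that the paper alludes to (the paper states this proposition without proof). You correctly restrict the supremum to $\theta\in(\alpha,\alpha+1)$, find the unique critical point $s^*(x)=\tfrac1\pi\arccot(-x/\pi)$ of the strictly concave function $s\mapsto sx+\log\sin(\pi s)$, substitute back using $\sin(\arccot y)=(1+y^2)^{-1/2}$, differentiate (or invoke the envelope theorem) to get $\tfrac{d}{dx}(\log\phi_\alpha)^*(x)=\alpha+\tfrac1\pi\arccot(-x/\pi)\in(\alpha,\alpha+1)$, and evaluate at $x=0$ to locate the root.

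One tiny slip, harmless to the argument: as $x\to-\infty$ you say the $-\tfrac12\log(1+x^2/\pi^2)$ term dominates. In fact $\tfrac x\pi\arccot(-x/\pi)\to-1$ stays bounded, and it is the linear term $\alpha x$ that dominates the logarithmic one; both tend to $-\infty$, so the conclusion $(\log\phi_\alpha)^*(x)\to-\infty$ is unaffected. (Alternatively, once strict monotonicity with derivative bounded below by $\alpha>0$ is established, the limits at $\pm\infty$ and hence existence of a unique root are automatic, so this asymptotic is not even needed.)
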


\subsection{The volume of the map: the law of the Malthusian martingale limit}
\label{sec:volume}
\newcommand{\bV}{\overline{V}}

Recall the definition of the modified Bessel function of the third kind $K_\nu$ (also called Macdonald function), see e.g. \cite[Section~7.2.2]{BatemanII}:
\begin{align}
\nonumber
 K_\nu(z) &= \frac \pi {2\sin(\pi \nu)} (I_{-\nu}(z) - I_\nu(z)) \\
 \label{eq:Knu}
 &= \frac {\Gamma(\nu)\Gamma(1-\nu)} 2 \sum_{n=0} \frac 1 {n!}\left[\frac{(z/2)^{2n-\nu}}{\Gamma(n-\nu+1)} - \frac{(z/2)^{2n+\nu}}{\Gamma(n+\nu+1)}\right].
\end{align}
Recall that $\alpha\in(1,2)\backslash\{3/2\}$ throughout the paper. Define for every $\theta > 0$,
\[
 \psi_{\alpha,\theta}(x) = \frac 2 {\Gamma(\alpha-1/2)} x^{(\alpha-1/2)/\theta} K_{\alpha-1/2}(2x^{1/\theta}).
\]
Then $\psi_{\alpha,\theta}(0) = 1$ for all $\alpha$ and $\theta$, by \eqref{eq:Knu}. Note that $\psi_{\alpha,\theta}(x) = \psi_{\alpha,\theta'}(x^{\theta'/\theta})$ for every $\theta',\theta>0$. Also recall the formula \cite[7.12.23]{BatemanII}
\begin{equation}
 \label{eq:inverse_gamma_laplace}
 \psi_{\alpha,\theta}(x) = \psi_{\alpha,2}(x^{2/\theta}) = \frac{1}{\Gamma(\alpha-1/2)}\int_0^\infty e^{-x^{2/\theta} y-1/y} y^{-(\alpha+1/2)}\,dy.
\end{equation}
In particular, $\psi_{\alpha,2}$ is the Laplace transform of the inverse-Gamma distribution with parameters $\alpha-1/2$ and $1$. The following theorem identifies the law of the Malthusian martingale limit in terms of the function $\psi_{\alpha,\theta}$: 

\begin{theorem}[Law of the Malthusian martingale limit]
\label{th:volume}\
The Malthusian martignale $(W_n(\alpha,\theta_\alpha))_{n\ge 0}$ is uniformly integrable for all $\alpha \in (1,2)\setminus \{3/2\}$. Its limit $W_\infty(\alpha,\theta_\alpha)$ has the following Laplace transform:
In the dilute case $(\alpha>3/2)$, 
\begin{equation*}
\E\mb[{ e^{-x W_\infty(\alpha,2)} }  = 
\psi_{\alpha,2}((\alpha-3/2)x) \,,
\end{equation*}
that is, $W_\infty(\alpha,2)$ follows the inverse-Gamma distribution with parameters $\alpha-1/2$ and $\alpha-3/2$.
In the dense case $(\alpha<3/2)$,  
\begin{equation*}
\E\mb[{ e^{-x W_\infty(\alpha,2\alpha-1)} }  = 
\psi_{\alpha,2\alpha-1} \m({ \frac{\Gamma(\alpha+1/2)}{\Gamma(3/2-\alpha)}x }   \,.
\end{equation*}
\end{theorem}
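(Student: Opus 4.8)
The argument has three parts: \emph{(i)} uniform integrability of the Malthusian martingale, \emph{(ii)} the smoothing (distributional fixed-point) equation for its limit $W_\infty := W_\infty(\alpha,\theta_\alpha)$, and \emph{(iii)} the resolution of this equation, which produces the Bessel-type function $\psi_{\alpha,\theta_\alpha}$. For \emph{(i)}: since $W_n(\alpha,\theta_\alpha)$ is the additive martingale of the cascade $Z_\alpha$ at the Malthusian parameter, its uniform integrability follows from the Biggins--Lyons criterion, which asks for $\phi_\alpha'(\theta_\alpha)<0$ together with an $x\log x$-type integrability of $W_1(\alpha,\theta_\alpha)$. The first point is immediate from the explicit formula: on $(\alpha,\alpha+1)$ the function $\phi_\alpha(\theta)=\sin(\pi(2-\alpha))/\sin(\pi(\theta-\alpha))$ is convex, with a unique minimum at $\theta=\alpha+\tfrac12$ where it equals $\sin(\pi(2-\alpha))<1$; hence its two roots $\theta_\alpha=\min(2,2\alpha-1)$ and $\max(2,2\alpha-1)$ lie on opposite sides of $\alpha+\tfrac12$, so $\phi_\alpha'(\theta_\alpha)<0$. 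The integrability is ensured by the finiteness of $\phi_\alpha$ on a neighbourhood of $\theta_\alpha$ inside $(\alpha,\alpha+1)$ together with the moment bounds on $W_1(\alpha,\theta)$ from Section~\ref{sec:Lp}. Thus $W_n(\alpha,\theta_\alpha)\to W_\infty$ almost surely and in $L^1$, with $\E[W_\infty]=1$ and $W_\infty>0$ almost surely. (One could instead exploit the geometric supermartingale $\sum_{|u|=k}\bV(\chi\0p(u))$ together with Budd's estimate $\bV(p)\sim c_\alpha p^{\theta_\alpha}$, but then ruling out a loss of mass in the limit is more delicate, so I would favour the Biggins route.)

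For \emph{(ii)}: decomposing the cascade at the first generation, the subtrees hanging from the children of $\varnothing$ are i.i.d.\ copies of the whole cascade rescaled by $\xi_i^{(\varnothing)}$, so $W_n(\alpha,\theta_\alpha)=\sum_{i\ge1}\bigl(\xi_i^{(\varnothing)}\bigr)^{\theta_\alpha}W_{n-1}^{(i)}(\alpha,\theta_\alpha)$; letting $n\to\infty$ and using the $L^1$ convergence of part \emph{(i)} yields the smoothing equation
\[ W_\infty\ \deq\ \sum_{i\ge1}\xi_i^{\theta_\alpha}\,W_\infty^{(i)},\]
where $(\xi_i)_{i\ge1}\sim\nu_\alpha$ and the $W_\infty^{(i)}$ are i.i.d.\ copies of $W_\infty$ independent of $(\xi_i)$. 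Writing $L(x)=\E[e^{-xW_\infty}]$ and unfolding the definition of $\nu_\alpha$, this becomes the functional equation
\[ \E[1/\tau]\,L(x)\ =\ \E\Bigl[\tfrac1\tau\prod_{t\le\tau}L\bigl((\Delta\zeta_t)^{\theta_\alpha}x\bigr)\Bigr],\qquad x\ge0,\]
which must be solved under $L(0)=1$, $-L'(0)=\E[W_\infty]=1$ and $L$ bounded.

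For \emph{(iii)}: I would invoke the L\'evy analogue of Proposition~\ref{prop:rw_extension} (the extension of Proposition~\ref{prop:levy} noted in the remark that follows it) to handle the multiplicative functional $\prod_{t\le\tau}L((\Delta\zeta_t)^{\theta_\alpha}x)$: differentiating the functional equation in $x$ brings out a tagged jump times a product over the remaining jumps, and by the self-similarity of $\zeta$ this residual product re-expresses itself in terms of $L$ again. Carrying this through should show that, after the linear change of variable $x\mapsto c_\alpha x$, the function $L$ in the dilute case, resp.\ its reparametrisation $z\mapsto L(z^{\theta_\alpha/2})$ in the dense case, solves the modified-Bessel ODE
\[ x\,\psi''(x)+\bigl(\tfrac32-\alpha\bigr)\psi'(x)-\psi(x)=0,\]
which is precisely the equation satisfied by $\psi_{\alpha,2}$ (as one checks from \eqref{eq:inverse_gamma_laplace} by a single integration by parts). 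Together with $\psi(0)=1$ and boundedness at $\infty$ this selects $\psi_{\alpha,2}$ uniquely, giving $L(x)=\psi_{\alpha,\theta_\alpha}(c_\alpha x)$; the constant $c_\alpha$ is then pinned down by $-L'(0)=1$ through the small-$x$ expansion of $\psi_{\alpha,\theta_\alpha}$ read off from \eqref{eq:Knu} — namely $\psi_{\alpha,\theta_\alpha}(y)=1-(\alpha-\tfrac32)^{-1}y+o(y)$ in the dilute case, and $\psi_{\alpha,\theta_\alpha}(y)=1+\bigl(\Gamma(\tfrac12-\alpha)/\Gamma(\alpha-\tfrac12)\bigr)y+o(y)$ in the dense case (where the Bessel correction exponent $2(\alpha-\tfrac12)/(2\alpha-1)$ equals $1$, so the first correction really is linear) — which yields $c_\alpha=\alpha-\tfrac32$, resp.\ $c_\alpha=\Gamma(\alpha+\tfrac12)/\Gamma(\tfrac32-\alpha)$ after using $\Gamma(\alpha+\tfrac12)=(\alpha-\tfrac12)\Gamma(\alpha-\tfrac12)$ and $\Gamma(\tfrac32-\alpha)=(\tfrac12-\alpha)\Gamma(\tfrac12-\alpha)$. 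A logically cleaner variant is to skip the derivation of the ODE and instead \emph{verify} that $\psi_{\alpha,\theta_\alpha}(c_\alpha\,\cdot)$ solves the functional equation above, then conclude by uniqueness of the fixed point of the smoothing transform at the Malthusian parameter.

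The crux is part \emph{(iii)}. Proposition~\ref{prop:levy} only yields first moments of \emph{additive} functionals of the jumps of $\zeta$ before $\tau$, whereas the fixed-point equation involves a \emph{multiplicative} one; one therefore has to reduce to the additive case by differentiation — at the cost of a product over the untagged jumps, which closes up only by virtue of the self-similarity of $\zeta$ — or set up an exponential-martingale / Esscher-transform argument for $\zeta$ stopped at $\tau$. Making this rigorous, and in particular getting the power $\theta_\alpha/2$ right in the dense phase so that the two phases collapse to the same Bessel function up to a change of variable, is where the real work lies; a secondary point is the uniqueness input, for which one quotes uniqueness of fixed points of the smoothing transform in the regime $\phi_\alpha'(\theta_\alpha)<0$ (Liu; Biggins--Kyprianou), normalised by $\E[W_\infty]=1$, or argues directly about the ODE and its boundary behaviour.
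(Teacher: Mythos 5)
Your overall architecture --- uniform integrability, the fixed-point equation, and identification of the Laplace transform --- is the right one, but you part ways with the paper at two points, one of which is a genuine issue.

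First, the paper does not go through the Biggins--Lyons criterion or through uniqueness of fixed points of the smoothing transform. Instead it proves a self-contained Proposition~\ref{prop:mult_cascade_limit}: if one can exhibit \emph{any} function $\phi$ with $\phi(0)=1$, $1-\phi(x)\sim x$, solving the fixed-point equation, then the multiplicative martingale $\prod_{|u|=n}\phi(xZ_u)$ converges, $\phi$ is read off as the Laplace transform of $W_\infty$, and $L^1$-convergence of $W_n$ (hence UI) follows from Scheff\'e. This single lemma replaces both of the external theorems you invoke (Lyons's $x\log x$ theorem and Liu/Biggins--Kyprianou uniqueness). The paper even remarks explicitly that its proof of UI bypasses the Biggins--Lyons result. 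Your route is correct but less economical.

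Second, and more importantly, the primary variant of your part \emph{(iii)} --- differentiating the fixed-point equation for an \emph{unknown} Laplace transform $L$ and then ``by self-similarity'' closing up to a modified-Bessel ODE --- does not go through as written. After differentiation and the extension of Proposition~\ref{prop:levy}, the tagged-jump formula leaves you with the expectation $\E\bigl[\prod_{t<\tau}L\bigl(x(\Delta\zeta_t)^{\theta_\alpha}\bigr)\bigr]^{1+y}$ in the integrand; this is an exponential of $\kappa_{L,x}^{-1}(0)$, the inverse cumulant of an Esscher-transformed (killed) L\'evy process, and it is \emph{not} $L$ itself nor a power of $L$. Self-similarity of $\zeta$ gives nothing here. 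The paper can close this step only because it works with the explicit Bessel candidate $\psi_{\alpha,\theta}$, for which one can check by direct computation that $\kappa_{\psi,x}(2x)=0$, i.e.\ $\E\bigl[\prod_{t<\tau}\psi(x\Delta\zeta_t)\bigr]=e^{-2x}$; the resulting integral identity is then a known formula for Whittaker/Macdonald functions. For an unknown $L$ you would be left with a coupled integro-functional equation, not the ODE $x\psi''+(\tfrac32-\alpha)\psi'-\psi=0$. Your ``logically cleaner variant'' --- \emph{verify} that $\psi_{\alpha,\theta_\alpha}(c_\alpha\,\cdot)$ solves the fixed-point equation and then conclude --- is indeed the right route and is exactly the content of the paper's Lemma~\ref{lem:psi_equation}; but you should present it as the main argument rather than a fallback, and you should spell out the Esscher-transform / killed-L\'evy step that your sketch omits. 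Your small-$x$ expansions and the resulting constants $c_\alpha=\alpha-\tfrac32$ (dilute) and $c_\alpha=\Gamma(\alpha+\tfrac12)/\Gamma(\tfrac32-\alpha)$ (dense) are correct.
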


\begin{remark}
Let $V(p)$ be the volume (i.e.\ number of vertices) of the loop-decorated quadrangulation distributed according to $\prob_{n;g,h}\0p$. We expect $W_\infty(\alpha,\theta_\alpha)$ to be the scaling limit of $V(p)$, more precisely, 
\begin{equation}\label{eq:scaling limit volume}
p^{-\theta_\alpha} V(p) \cv[]p \Lambda \cdot W_\infty(\alpha,\theta_\alpha)
\end{equation}
for some constant $\Lambda>0$. To see why, let us consider $\bV_n(p)$, the expectation of $V(p)$ conditionally on the part of the loop-decorated quadrangulation outside the loops of generation $n$. In particular $\bV_0(p) =: \bV(p)$ is the (unconditional) expectation of $V(p)$. Clearly, $(\bV_n(p))_{n\ge 0}$ is a uniformly integrable martingale that converges to $V(p)$. 

Actually, $\bV_n(p)$ is the discrete counterpart of the Malthusian martingale $W_n(\alpha,\theta_\alpha)$: According to a result of Timothy Budd (Theorem~\ref{th:budd}), $\bV(p) \sim \Lambda p^{\theta_\alpha}$ as $p\to\infty$.  Combining this with estimates on the volume of the gasket, it can be shown that the volume outside the loops of generation $n$ is negligible, hence
$\bV_n(p) \approx \sum_{|u|=n} \bV(\chi\0p_u) \approx \Lambda \cdot \sum_{|u|=n} (\chi\0p_u)^{\theta_\alpha}$.
In addition, the scaling limit of the perimeter cascade (Theorem~\ref{th:main}) gives $\chi\0p_u \approx p Z_\alpha(u)$. It follows that $\bV_n(p) \approx p^{\theta_\alpha} \Lambda \cdot \sum_{|u|=n} Z_\alpha(u)^{\theta_\alpha} = p^{\theta_\alpha} \Lambda \cdot W_n(\alpha,\theta_\alpha)$. Then, taking the limit $n\to\infty$ on both sides suggests \eqref{eq:scaling limit volume}.

The above heuristics can be turned into a rigorous proof if we assume that the family $(p^{-\theta_\alpha} V(p))_{p\ge 0}$ is uniformly integrable.
\end{remark}

\begin{remark}
The inverse-Gamma distribution is known to appear as the limiting law of the volume of planar maps decorated by statistical physics models in the dilute case. Even in the dense case, the Laplace transform $\psi_{\alpha,2\alpha-1}$ has implicitly appeared in the physics literature in the same context as this paper \cite[Equation~(2.5)]{KostovStaudacher} (we are grateful to Timothy Budd for showing this to us, this helped us find the right law!). Note that Theorem~\ref{th:volume} shows in particular that if $\alpha \in (1,3/2)$, the function $\psi_{\alpha,2\alpha-1}$ is the Laplace transform of a probability distribution, which is not obvious \emph{a priori} and for which we do not have a direct proof. In particular, we do not have an explicit expression of its density. However, this probability distribution is related to the Laplace transform of the inverse-Gamma distribution of parameter $\alpha-1/2$ by the subordination relation
\[
 \psi_{\alpha,2}(x) = \psi_{\alpha,2\alpha-1}(x^{\alpha - 1/2}) \,.
\]
\end{remark}

\begin{remark}
In order to show uniform integrability of additive martingales, one usually uses a famous result of Biggins, later improved by Lyons~\cite{Lyo1997}, which states that the martingale $W_n(\alpha,\theta)$ is uniformly integrable if $$ \theta (\log \phi_\alpha)'(\theta) < \log \phi_\alpha(\theta)\quad\text{and}\quad \E[W_1(\alpha,\theta)\log_+(W_1(\alpha,\theta))] < \infty,$$ and otherwise converges almost surely to $0$. Our proof of uniform integrability bypasses this result.
\end{remark}

The main part of the proof of Theorem~\ref{th:volume} is the following Lemma~\ref{lem:psi_equation}, which identifies the function $\psi_{\alpha,\theta}$ as the solution to a certain functional equation. Together with a general result on multiplicative cascades (Proposition~\ref{prop:mult_cascade_limit} below), this will readily imply the theorem.

\begin{lemma}
\label{lem:psi_equation}
For every $\alpha\in(1,2)$, $\theta > 0$, the function $\psi_{\alpha,\theta}$ satisfies the equation
\begin{equation}
\psi_{\alpha,\theta}(x) = \E\left[\prod_{i=1}^\infty \psi_{\alpha,\theta}(x Z_\alpha(i)^\theta)\right],\,x> 0,\quad \psi(0) = 1.
\label{eq:fixed_point_psi}
\end{equation}
\end{lemma}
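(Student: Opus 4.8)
The plan is to verify equation \eqref{eq:fixed_point_psi} by direct computation, exploiting the integral representation \eqref{eq:inverse_gamma_laplace} of $\psi_{\alpha,\theta}$ together with the explicit form of the Biggins transform. First, by the scaling identity $\psi_{\alpha,\theta}(x) = \psi_{\alpha,2}(x^{2/\theta})$ and the substitution $x \mapsto x^{\theta/2}$, it suffices to prove the identity for $\theta = 2$, i.e.\ that $\psi_{\alpha,2}$ satisfies $\psi_{\alpha,2}(x) = \E[\prod_{i\ge 1}\psi_{\alpha,2}(x Z_\alpha(i)^2)]$. Since $\psi_{\alpha,2}$ is the Laplace transform of an inverse-Gamma$(\alpha-1/2,1)$ law, I would introduce an independent inverse-Gamma random variable $G$ with $\E[e^{-xG}] = \psi_{\alpha,2}(x)$, so that $\E[\prod_i \psi_{\alpha,2}(xZ_\alpha(i)^2)] = \E[\exp(-x\sum_i Z_\alpha(i)^2 G_i)]$ where the $G_i$ are i.i.d.\ copies of $G$ independent of the cascade. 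The claim then becomes the distributional fixed-point equation $G \deq \sum_{i\ge 1} Z_\alpha(i)^2 G_i$, a smoothing transform / Mandelbrot cascade equation.

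The natural way to establish this is via Laplace transforms and a uniqueness argument. Writing $L(x) = \E[e^{-xG}] = \psi_{\alpha,2}(x)$, the fixed-point equation at the level of Laplace transforms reads $L(x) = \E[\prod_i L(xZ_\alpha(i)^2)]$, which is exactly \eqref{eq:fixed_point_psi} with $\theta=2$; so I would instead verify it analytically. Using $\psi_{\alpha,2}(x) = \frac{1}{\Gamma(\alpha-1/2)}\int_0^\infty e^{-xy - 1/y}y^{-(\alpha+1/2)}\,dy$, expand the right-hand side of \eqref{eq:fixed_point_psi}; the product over $i$ of the Laplace transforms evaluated at $xZ_\alpha(i)^2$ corresponds to the Laplace transform at $x$ of $\sum_i Z_\alpha(i)^2 G_i$, and one computes $\E[\exp(-x\sum_i Z_\alpha(i)^2 G_i)]$ by conditioning on the cascade and using independence. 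The key algebraic input is the recursive self-similarity of the cascade: $\sum_i Z_\alpha(i)^2 G_i$ decomposes along the first generation, and the offspring law $\nu_\alpha$ has the property that $\sum_i Z_\alpha(i)^{\theta_\alpha}$ has mean $1$ (this is precisely $\phi_\alpha(\theta_\alpha)=1$ from \eqref{eq:biggins_transform}), so the relevant martingale structure is consistent. I expect the cleanest route is to guess the fixed point (which the statement hands us) and check it by a moment/Laplace computation rather than to solve the equation from scratch.

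Concretely, I would compute $\E[\sum_{i\ge1}(Z_\alpha(i)^2 G_i)]$ — no wait, since we are in the regime $\theta=2=\theta_\alpha$ only in the dilute case, for the dense case $\theta_\alpha = 2\alpha-1$ and the scaling relation $\psi_{\alpha,\theta_\alpha}(x) = \psi_{\alpha,2}(x^{2/(2\alpha-1)})$ reduces everything again to $\theta = 2$ but now with exponent $2/(2\alpha-1)$ on $x$; so in fact Lemma~\ref{lem:psi_equation} for general $\theta$ follows from the case $\theta = 2$ by the substitution $x\mapsto x^{\theta/2}$ applied inside, using $Z_\alpha(i)^\theta$ vs $(Z_\alpha(i))^2$ — one checks $\psi_{\alpha,\theta}(xZ_\alpha(i)^\theta) = \psi_{\alpha,2}((xZ_\alpha(i)^\theta)^{2/\theta}) = \psi_{\alpha,2}(x^{2/\theta}Z_\alpha(i)^2)$, so the $\theta$-equation is literally the $2$-equation evaluated at $x^{2/\theta}$. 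Hence the whole lemma is equivalent to the single identity $\psi_{\alpha,2}(x) = \E[\prod_i \psi_{\alpha,2}(x Z_\alpha(i)^2)]$, valid for all $\alpha\in(1,2)$ (note $2 \in (\alpha,\alpha+1)$ for $\alpha\in(1,2)$, so $\phi_\alpha(2)$ is finite and indeed equals $1$).

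To prove this single identity, I would use the integral representation and Fubini: the right-hand side equals $\E\big[\int \cdots\big]$, and after interchanging the expectation over the cascade with the $\int$'s, the inner object is governed by the Laplace functional of the point process $\{Z_\alpha(i)\}$, which by construction of $\nu_\alpha$ is a $\tau^{-1}$-biased functional of the jumps of the stable Lévy process $\zeta$ up to hitting $-1$. Here I expect to invoke Proposition~\ref{prop:levy} (or rather the structure behind it) to turn the sum over $i$ into an integral against the Lévy measure $\pi(\mathrm dx) \propto x^{-\alpha-1}\mathrm dx$, reducing to a Beta-type integral identity for the Macdonald function $K_{\alpha-1/2}$; the relevant special-function identity is the one relating $\int_0^\infty e^{-xy-1/y}y^{-(\alpha+1/2)}dy$ type integrals to Bessel $K$, exactly \eqref{eq:inverse_gamma_laplace}. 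The main obstacle, I anticipate, is the bookkeeping in passing from the discrete cascade to the continuum Lévy description while keeping the $\tau^{-1}$-bias and the size-biasing consistent — in other words, justifying the interchange of expectation and integration and correctly handling the first-moment (Malthusian) normalization so that the martingale property $\E[\sum_i Z_\alpha(i)^2] = 1$ is used exactly where needed; alternatively the authors may instead cite a general fixed-point/uniqueness result for smoothing transforms (Proposition~\ref{prop:mult_cascade_limit}) and only need to verify that $\psi_{\alpha,\theta}$ solves the equation, which is the computation sketched above and is the part I would write out in detail.
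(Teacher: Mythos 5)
Your reduction to a single exponent is correct (you pick $\theta=2$, the paper picks $\theta=1$; equivalent by $\psi_{\alpha,\theta}(x)=\psi_{\alpha,1}(x^{1/\theta})$), and you correctly recognize the identity as a smoothing-transform fixed-point equation that must be verified analytically. But the verification itself is where the whole difficulty lies, and your sketch of it has a real gap.

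The right-hand side, written out via the $\tau^{-1}$-biased L\'evy description of $\nu_\alpha$, is
$c_\alpha\,\E\bigl[\tfrac1\tau\prod_{t<\tau}\psi(x\Delta\zeta_t)\bigr]$. You propose to ``invoke Proposition~\ref{prop:levy} (or the structure behind it) to turn the sum over $i$ into an integral against $\pi$,'' but there is no sum here: the object is a \emph{product} over jumps, and Proposition~\ref{prop:levy} only linearizes expressions of the form $\E[\tau^{-1}\sum_{t\le\tau}f(\Delta\zeta_t)]$. The Fubini step you sketch (pulling the integral representations of each factor $\psi_{\alpha,2}(xZ_\alpha(i)^2)$ outside the product) produces an infinite-dimensional integral whose inner expectation $\E[e^{-x\sum_i Z_\alpha(i)^2 y_i}]$ is a $\tau^{-1}$-biased Laplace functional of the jump point process, which is exactly as hard as what you started with. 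Also, the Malthusian normalization $\phi_\alpha(\theta_\alpha)=1$ you invoke is irrelevant here: the lemma holds for all $\theta>0$, not just the Malthusian exponent.

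The paper supplies precisely the two technical ideas missing from your sketch. First, it \emph{differentiates} the would-be identity in $x$; by the product rule this brings a sum $\sum_{t<\tau}(\Delta\zeta_t)\psi'(x\Delta\zeta_t)$ in front of the remaining product, and the extension of Proposition~\ref{prop:levy} (the continuum analogue of Proposition~\ref{prop:rw_extension}) converts this mixed sum-product into an integral against $\pi(\mathrm{d}y)/(1+y)$ times an \emph{unbiased} expectation $\E[\prod_{t<\tau}\psi(x\Delta\zeta_t)]^{1+y}$. Second, it computes that unbiased expectation via a change of measure: the functional $\prod_{s<t}\psi(x\Delta\zeta_s)$ tilts $\zeta$ into a non-conservative L\'evy process with cumulant $\kappa_{\psi,x}$, so $\E[\prod_{t<\tau}\psi(x\Delta\zeta_t)] = e^{-\kappa_{\psi,x}^{-1}(0)}$, and one checks by direct computation that $\kappa_{\psi,x}^{-1}(0)=2x$. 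Only then does everything collapse to a one-dimensional Bessel/Whittaker integral identity. Without the differentiation step and the exponential-tilting step your argument cannot get past the product, so as written the proposal does not establish the lemma.
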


\begin{proof}
It is enough to prove the formula for $\theta = 1$, by the relation $\psi_{\alpha,\theta}(x) = \psi_{\alpha,1}(x^{1/\theta})$, $x>0$. We therefore assume $\theta=1$ from now on and write $\psi := \psi_{\alpha,1}$.

We start by expressing the right-hand side of \eqref{eq:fixed_point_psi} in terms of the jumps of an $\alpha$-stable L\'evy process: Let $(\zeta_t)_{t\ge0}$ be an $\alpha$-stable L\'evy process with no negative jumps started at 0, more precisely we assume that its cumulant is given by
\begin{equation}
\log \E[e^{-\lambda \zeta_1}] = \int_0^\infty (e^{-\lambda x} - 1 + \lambda x)\frac{1}{x^{\alpha+1}}\,dx.
\label{eq:cumulant}
\end{equation}
Let $\tau$ denote the hitting time of $-1$ of $\zeta$. Then \eqref{eq:fixed_point_psi} reads,
\begin{equation}
\psi(x) = c_\alpha \E\left[\frac1\tau \prod_{t<\tau} \psi(x \Delta \zeta_t) \right],\quad c_\alpha := \left(\E\left[\frac 1 \tau\right]\right)^{-1},
\label{eq:rhs_zeta}
\end{equation}
where the product is over all jump times $t$ less than $\tau$.
By  \eqref{eq:T_biggins_transform} and Euler's reflection formula, 
\begin{equation}
c_\alpha = -\frac{\sin(\pi \alpha)}{\pi} = \frac{\sin(\pi (\alpha-1))}{\pi}.
\label{eq:1tau}
\end{equation}
Now derive \eqref{eq:rhs_zeta} with respect to $x$, which gives by the product formula,
\begin{align*}
 \psi'(x) = c_\alpha \E\left[\frac 1 \tau \sum_{t < \tau} (\Delta \zeta_t)\psi'(x\Delta \zeta_t) \prod_{s<\tau,\,s\ne t}\psi(x \Delta \zeta_t)\right]. 
\end{align*}
We the extension of Proposition~\ref{prop:levy} mentioned after its statement, we calculate this as follows:
\begin{equation}
 \label{eq:psiprime}
  \psi'(x) = c_\alpha \int_0^\infty \frac{dy}{y^{\alpha+1}} y\psi'(x y)\frac 1 {1+y} \E\left[\prod_{t<\tau} \psi(x \Delta \zeta_t)\right]^{1+y}.
\end{equation}

In order to calculate the expectation on the right-hand side of \eqref{eq:psiprime}, we use the fact that the functional $\left(\prod_{t<\tau} \psi(\lambda \Delta \zeta_t)\right)$ induces a change of measure of the L\'evy process $\zeta$, which turns it into a non-conservative L\'evy process, i.e.~a L\'evy process with killing. More precisely, define the subprobability measure $\P_{\psi,x}$ by
\[
\E_{\psi,x}[H_t] = \E\left[H_t \prod_{s<t} \psi(x \Delta \zeta_s) \right],
\]
for every $\sigma((\zeta_s)_{0\le s\le t})$-measurable bounded r.v. $H_t$. It is a standard fact that under $\P_{\psi,x}$, the process $\zeta$ is again a L\'evy process with cumulant $\kappa_{\psi,x}$ given by
\begin{equation}
\kappa_{\psi,x}(\lambda) = \int_0^\infty (e^{-\lambda y}\psi(x y) - 1 + \lambda y)\frac{1}{y^{\alpha+1}}\,dy.
\label{eq:psi_psi}
\end{equation}
It follows from the definition of $\kappa_{\psi,x}$ that it is a continuous, strictly increasing function on $[0,\infty)$. We may thus define its inverse $\kappa_{\psi,x}^{-1}$  on $[\kappa_{\psi,x}(0),\infty)$ (note that $\kappa_{\psi,x}(0)\le 0$, so in particular, $\kappa_{\psi,x}^{-1}(0)$ is well defined). The following is well-known:
\begin{equation}
\E\left[\prod_{t<\tau} \psi(x \Delta \zeta_t)\right] = \E_{\psi,x}[1] = e^{-\kappa_{\psi,x}^{-1}(0)}.
\label{eq:inverse}
\end{equation}
It turns out that $\kappa_{\psi,x}^{-1}(0) = 2x$, or, equivalently, $\kappa_{\psi,x}(2x) = 0$, as can be easily checked by elementary computations, using the formula \eqref{eq:inverse_gamma_laplace} (or by a computer algebra software).
Together with \eqref{eq:inverse}, Equation \eqref{eq:psiprime} then becomes,
\[
\psi'(x) =  c_\alpha \int_0^\infty \frac{dy}{y^{\alpha}} \psi'(x y)\frac 1 {1+y} e^{-2x(1+y)}.
\]
Changing variables $y\mapsto y/x$ in the integral, this equation becomes,
\begin{align}
\label{eq:prime2}
\psi'(x) = c_\alpha e^{-2x} x^{\alpha} \int_0^\infty \frac{dy}{y^{\alpha}} \psi'(y)\frac 1 {x+y} e^{-2y}.
\end{align}
We now recall that $\psi(x) = C_\alpha x^{\alpha-1/2}K_{\alpha-1/2}(2x)$ for some $C_\alpha > 0$. Then $\psi'(x) = -2 C_\alpha x^{\alpha-1/2} K_{\alpha-3/2}(2x)$ \cite[7.11(21)]{BatemanII}. Using this identity, \eqref{eq:prime2} is equivalent to
\[
 e^{2x} \frac{K_{\alpha-3/2}(2x)}{\sqrt x} = c_\alpha \int_0^\infty  \frac 1 {x+y} e^{-2y} \frac{K_{\alpha-3/2}(2y)}{\sqrt y}\,dy,
\]
or, equivalently,
\begin{equation}
 \frac{ e^{x/2} K_{\alpha-3/2}(x/2)}{\sqrt x} = c_\alpha \int_0^\infty  \frac 1 {x+y} e^{-y} \frac{e^{y/2}K_{\alpha-3/2}(y/2)}{\sqrt y}\,dy.
\label{eq:K}
\end{equation}
Recall $c_\alpha = \sin(\pi(\alpha-1))/\pi = \sin(\pi((\alpha - 3/2) + 1/2))/\pi$. Equation \eqref{eq:K} is then a special case of a known formula for Whittaker functions, see e.g. \cite[p335]{Cuyt2008} or \cite[13.16.6]{nist}.
This finishes the proof of \eqref{eq:fixed_point_psi}.
\end{proof}

The following proposition is a general result on multiplicative cascades, for which we provide a proof for completeness. Although results of this flavor are omnipresent in the literature and its proof idea, using multiplicative martingales, is by now standard, we could not find a suitable reference in the literature working under our minimal assumptions. 

\begin{proposition}
\label{prop:mult_cascade_limit}
 Let $(Z_u)_{u}$ be a multiplicative cascade with $Z_\emptyset = 1$. Suppose that $\E[\sum_{i=1}^\infty Z_i] = 1$. Furthermore, suppose there exists a measurable function $\phi:\R_+\to[0,1]$ satisfying  $\phi(0) = 1$,  $1-\phi(x) \sim x$ as $x\to0$, and
 \[
  \forall x\ge0: \phi(x) = \E\left[\prod_{i=1}^\infty \phi(xZ_i)\right].
 \]
 Then the martingale $W_n = \sum_{|u|=n} Z_u$ is uniformly integrable and its limiting random variable has Laplace transform $\phi$.
\end{proposition}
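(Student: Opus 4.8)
\textbf{Proof plan for Proposition~\ref{prop:mult_cascade_limit}.}
The plan is to use the classical multiplicative-martingale argument. First I would introduce, for $x\ge 0$, the process
\[
Y_n(x) = \prod_{|u|=n} \phi\bigl(x Z_u\bigr),
\]
and check that, thanks to the branching property of the cascade (the vectors $(\xi^{(u)}_i)_i$ being i.i.d.\ and the fixed-point equation $\phi(x)=\E[\prod_i \phi(xZ_i)]$), the process $(Y_n(x))_{n\ge0}$ is a bounded martingale with respect to $\F_n=\sigma(Z_u : |u|\le n)$: indeed, conditionally on $\F_n$, the sub-cascades rooted at the vertices $u$ with $|u|=n$ are independent copies of the original cascade rescaled by $Z_u$, so $\E[Y_{n+1}(x)\mid \F_n]=\prod_{|u|=n}\E[\prod_i \phi(xZ_u\xi^{(u)}_i)]=\prod_{|u|=n}\phi(xZ_u)=Y_n(x)$. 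Being bounded (values in $[0,1]$), it converges a.s.\ and in $L^1$ to a limit $Y_\infty(x)$ with $\E[Y_\infty(x)]=\phi(x)$.

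Next I would connect $Y_\infty(x)$ to the additive martingale $W_n=\sum_{|u|=n}Z_u$, which by the hypothesis $\E[\sum_i Z_i]=1$ is a non-negative $\F_n$-martingale and hence converges a.s.\ to some $W_\infty\in[0,\infty)$. Using $1-\phi(x)\sim x$ as $x\to0$, together with the elementary inequalities $0\le -\log\phi(t)\le Ct$ for $t$ in a neighbourhood of $0$ and the fact that $\sup_{|u|=n}Z_u\to0$ a.s.\ (which follows from $W_n$ being a convergent non-negative martingale, so the individual summands go to $0$), I would argue that along the a.s.\ convergent subsequence
\[
-\log Y_n(x) = -\sum_{|u|=n}\log\phi\bigl(xZ_u\bigr) = x\sum_{|u|=n}Z_u\,(1+o(1)) \longrightarrow x\,W_\infty \qquad\text{a.s.}
\]
so that $Y_\infty(x) = e^{-xW_\infty}$ a.s. (one has to be a little careful to push $o(1)$ uniformly: since $\sup_{|u|=n}Z_u\to0$, for $n$ large every argument $xZ_u$ lies in the region where $|{-\log\phi(t)}-t|\le \epsilon t$, and then $|{-\log Y_n(x)}-xW_n|\le \epsilon x W_n$; let $n\to\infty$ then $\epsilon\to0$). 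Taking expectations and using $\E[Y_\infty(x)]=\phi(x)$ gives $\E[e^{-xW_\infty}]=\phi(x)$ for all $x\ge0$, which is the claimed identity for the Laplace transform.

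Finally, uniform integrability of $(W_n)$: from $\E[e^{-xW_\infty}]=\phi(x)$ and $1-\phi(x)\sim x$ we get $\E[W_\infty]=\lim_{x\to0}\frac{1-\phi(x)}{x}=1=\E[W_n]$ for every $n$; a non-negative martingale whose a.s.\ limit has the same (finite) expectation as the martingale itself is necessarily uniformly integrable (equivalently, $W_n\to W_\infty$ in $L^1$, by Scheff\'e's lemma applied to $W_n\to W_\infty$ a.s.\ with matching means). This also retro-justifies $W_\infty$ being the $L^1$-limit of $W_n$, closing the loop.

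\textbf{Main obstacle.} The delicate point is the passage $-\log Y_n(x)\to xW_\infty$, i.e.\ replacing $-\log\phi$ by the identity inside the sum uniformly over the (growing number of) vertices. This requires the almost-sure statement $\sup_{|u|=n}Z_u\to0$ and a careful $\epsilon$-$\delta$ handling of the asymptotic $1-\phi(x)\sim x$; it is exactly here that the hypothesis on $\phi$ near $0$ is used, and where one must avoid assuming any integrability of $W_1\log W_1$ (which is why this argument is preferable to invoking the Biggins--Lyons criterion).
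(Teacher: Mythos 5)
Your proof is correct and follows essentially the same route as the paper's: introduce the multiplicative martingale $M_n(x)=\prod_{|u|=n}\phi(xZ_u)$, use the fixed-point equation to show it is a bounded martingale, exploit $\max_{|u|=n}Z_u\to0$ a.s.\ together with $1-\phi(x)\sim x$ to identify $M_\infty(x)=e^{-xW_\infty}$, take expectations to obtain the Laplace transform identity, and finally deduce $\E[W_\infty]=1$ and apply Scheff\'e's lemma to conclude uniform integrability. Your version is in fact slightly more explicit on the $\epsilon$--$\delta$ control of $\log\phi(t)\approx-t$ near $0$; otherwise the two arguments coincide, including leaving the step $\max_{|u|=n}Z_u\to0$ a.s.\ as an assertion deduced from the a.s.\ convergence of the non-negative martingale $(W_n)$.
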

\begin{proof}
We first note that $(W_n)_{n\ge0}$ is a non-negative martingale and thus converges a.s. to a limit $W_\infty$. It is easy to show that this implies that $\max_{|u|=n} Z_u \to 0$ a.s., as $n\to\infty$.

Now introduce for every $x\ge0$ the process $M_n(x) = \prod_{|u|=n} \phi(xZ_u)$, $n\ge0$.
 It is well-known and easy to show that for every $x\ge0$, $(M_n(x))_{n\ge0}$  is a martingale, called a \emph{multiplicative martingale} associated to the multiplicative cascade $(Z_u)_u$. It takes values in $[0,1]$ and therefore converges a.s. and in $L^1$ to a limit $M_\infty(x)$. Furthermore, since $1-\phi(x) \sim x$ as $x\to0$ by assumption and $\max_{|u|=n} Z_u \to 0$ a.s., as $n\to\infty$, 
 \[
  \log M_\infty(x) = \lim_{n\to\infty} \sum_{|u|=n} \log\phi(xZ_u) = \lim_{n\to\infty} \sum_{|u|=n} (-xZ_u) = -xW_\infty.
 \]
 This shows that for every $x\ge0$,
 \[
  \phi(x) = M_0(x) = \E[M_\infty(x)] = \E[e^{-xW_\infty}],
 \]
 Hence, $\phi$ is the Laplace transform of $W_\infty$. Moreover, since $1-\phi(x)\sim x$ as $x\to0$, the r.v. $W_\infty$ has unit expectation. Scheff\'e's lemma then gives that $W_n$ converges in $L^1$ to $W_\infty$, hence the martingale $(W_n)_{n\ge0}$ is uniformly integrable.
\end{proof}

\begin{proof}[Proof of Theorem~\ref{th:volume}]
By Lemma~\ref{lem:psi_equation} and Proposition~\ref{prop:mult_cascade_limit}, it suffices to show that $1-\psi_{\alpha,2}(x)\sim x/(\alpha-3/2)$ as $x\to0$ (if $\alpha>3/2$), and $1-\psi_{\alpha,2\alpha-1}(x) \sim (\Gamma(3/2-\alpha)/\Gamma(\alpha+1/2))x$ as $x\to0$ (if $\alpha < 3/2$). But this is an easy consequence of \eqref{eq:Knu}, noting that the second-order term as $z\to0$ is the $n=1$ term of the first sum if $\alpha > 3/2$ ($\nu > 1$), whereas it is the $n=0$ term of the second sum if $\alpha < 3/2$ ($\nu < 1$). 
\end{proof}

\subsection{\texorpdfstring{$L^p$}{Lp}-convergence of the additive martingales}
\label{sec:Lp}

The additive martingales introduced in Section~\ref{sec:biggins_transform} are important observables of the multiplicative cascade $Z_\alpha$ and it is vital to know that they do not display pathological behavior. This is ensured by the following proposition:

\begin{proposition}
 \label{prop:Lp_convergence}
 Let $p>1$ and $\theta\in(\alpha,\alpha+1)$ be such that $\log \phi_\alpha(p\theta) < p \log \phi_\alpha(\theta)$. Then the martingale $(W_n(\alpha,\theta))_{n\ge0}$ converges in $L^p$.
\end{proposition}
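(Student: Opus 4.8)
The plan is to establish $L^p$-boundedness of the martingale $(W_n(\alpha,\theta))_{n\ge0}$, which by the martingale convergence theorem in $L^p$ (for $p>1$) immediately yields convergence in $L^p$. The standard route for additive martingales of multiplicative cascades is the classical argument of Biggins~\cite{Big77}, and I would follow it. Write $W_n = W_n(\alpha,\theta)$, set $\lambda := \phi_\alpha(\theta)$, and recall the recursive (branching) structure: conditionally on the first generation, $W_{n+1} = \lambda^{-1}\sum_{i\ge1} (Z_\alpha(i))^\theta\, W_n^{(i)}$, where the $W_n^{(i)}$ are i.i.d.\ copies of $W_n$ independent of $(Z_\alpha(i))_{i\ge1}$. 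The aim is to show $\sup_n \E[W_n^p] < \infty$.

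First I would restrict to $p\in(1,2]$, which is the case covered by the hypothesis $\log\phi_\alpha(p\theta)<p\log\phi_\alpha(\theta)$ being most natural (for the statement as given, one checks $p\theta$ must lie in the domain $(\alpha,\alpha+1)$ of $\phi_\alpha$, so in particular $p<(\alpha+1)/\theta \le 2$ when $\theta>\alpha$; I would note this at the outset). For $p\in(1,2]$ one has the elementary inequality, for nonnegative $a_i$,
\[
 \Big(\sum_i a_i\Big)^p \le \sum_i a_i^p + \Big(\sum_i a_i\Big)^p - \sum_i a_i^p,
\]
and more usefully the bound $\E\big[|\sum_i (Y_i - \E Y_i)|^p\big] \le 2^p \sum_i \E[|Y_i|^p]$ for independent centered-ish summands (von Bahr--Esseen / the $c_p$-inequality). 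The cleanest implementation: condition on $\mathcal F_1 = \sigma(Z_\alpha(i):i\ge1)$, write $W_{n+1} = \lambda^{-1}\sum_i \xi_i W_n^{(i)}$ with $\xi_i := (Z_\alpha(i))^\theta$, and use
\[
 \E\big[W_{n+1}^p \mid \mathcal F_1\big] \le \lambda^{-p}\Big( \big(\textstyle\sum_i \xi_i\big)^p (\E W_n)^p + C_p \sum_i \xi_i^p\, \E\big[|W_n - \E W_n|^p\big]\Big)
\]
for a universal constant $C_p$ (splitting each $W_n^{(i)}$ into its mean plus a centered part and applying the $c_p$-inequality to the centered part conditionally). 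Taking expectations and using $\E W_n = 1$ (since $\E[\sum_i \xi_i] = \phi_\alpha(\theta) = \lambda$ gives $\E W_{n+1} = \E W_n = 1$), together with $\E[(\sum_i\xi_i)^p] = \phi_\alpha(p\theta) < \infty$ — this finiteness is exactly where $p\theta\in(\alpha,\alpha+1)$ enters, via the explicit formula in Theorem~\ref{th:main} — and $\E[\sum_i \xi_i^p] = \phi_\alpha(p\theta)$ as well, one obtains a recursion of the form
\[
 \E[W_{n+1}^p] \le A + \rho\, \E[W_n^p],
\]
where $A := \lambda^{-p}(1+C_p)\phi_\alpha(p\theta)$ and $\rho := \lambda^{-p} C_p' \phi_\alpha(p\theta)$ for a suitable constant; the point is that the hypothesis $\phi_\alpha(p\theta) < \lambda^p = \phi_\alpha(\theta)^p$ forces the "contraction factor" $\rho<1$ once one is slightly careful with constants, so iterating gives $\sup_n \E[W_n^p] \le A/(1-\rho) < \infty$.

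The main obstacle is getting the constants in the $c_p$-inequality step sharp enough that the condition $\log\phi_\alpha(p\theta)<p\log\phi_\alpha(\theta)$ — rather than some strictly stronger quantitative condition — suffices to make $\rho<1$. The clean fix, which I would adopt, is to not bound $\E[|W_n - \E W_n|^p]$ crudely by $2^p\E[W_n^p]$ but instead to run the argument for the \emph{centered} quantity $\bar W_n := W_n - 1$ directly: one shows $\bar W_{n+1} = \lambda^{-1}\sum_i \xi_i \bar W_n^{(i)} + (\lambda^{-1}\sum_i\xi_i - 1)$, and applies the von Bahr--Esseen inequality (valid for $p\in(1,2]$ with constant $2$) to each of the two independent centered pieces, yielding $\E[|\bar W_{n+1}|^p] \le 2\lambda^{-p}\phi_\alpha(p\theta)\,\E[|\bar W_n|^p] + 2\lambda^{-p}\E\big[|\sum_i\xi_i - \lambda|^p\big]$, and the bracket constant $2\lambda^{-p}\phi_\alpha(p\theta) = 2(\phi_\alpha(p\theta)/\phi_\alpha(\theta)^p)$ — still off by the factor $2$. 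To remove it one uses the standard trick of iterating over $m$ generations at once: the $m$-step analogue has contraction factor $2\,(\phi_\alpha(p\theta)/\phi_\alpha(\theta)^p)^m$, which is $<1$ for $m$ large since the hypothesis says the ratio is $<1$; boundedness over the finitely many residue classes mod $m$ then follows, and hence $\sup_n\E[|\bar W_n|^p]<\infty$, i.e.\ $\sup_n \E[W_n^p]<\infty$. This being a non-negative $L^p$-bounded martingale with $p>1$, Doob's inequality and dominated convergence give convergence in $L^p$, completing the proof.
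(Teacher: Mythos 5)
Your route—establishing $\sup_n \E[W_n^p]<\infty$ via the branching recursion, von Bahr--Esseen, and iteration over $m$ generations—is precisely the argument underlying the result of Biggins that the paper invokes as a black box (\cite[Theorem~1]{Biggins1992}), so there is nothing wrong with that plan in principle. The paper simply cites this theorem and then does the real work elsewhere.

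However, there is a genuine gap. Your recursion needs $\E[|\bar W_1|^p]<\infty$, equivalently $\E[W_1(\alpha,\theta)^p]<\infty$, as an input (it appears as the inhomogeneous term $\E\big[|\sum_i\xi_i-\lambda|^p\big]$, and again when you need $\E[|\bar W_m|^p]<\infty$ for the $m$-step iteration). You assert this follows from $\phi_\alpha(p\theta)<\infty$, writing $\E\big[(\sum_i\xi_i)^p\big]=\phi_\alpha(p\theta)$. That identity is false: by definition $\phi_\alpha(p\theta)=\E\big[\sum_i Z_\alpha(i)^{p\theta}\big]=\E\big[\sum_i\xi_i^p\big]$, and for $p>1$ one only has the superadditivity $(\sum_i\xi_i)^p\ge\sum_i\xi_i^p$, so $\phi_\alpha(p\theta)$ bounds $\E[W_1^p]$ from \emph{below}, not above. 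The finiteness of $\E[W_1^p]$ is a nontrivial property of the offspring law $\nu_\alpha$ (the biased sizes of jumps of a stable process up to a hitting time), which does not follow from the Biggins transform alone; the $\xi_i$ are neither independent nor finite in number. Establishing it is exactly the content of the paper's Lemma~\ref{lem:pth_moment}, whose proof requires H\"older, Chernoff bounds, and heavy-tailed large-deviation estimates, and which shows $\E[W_1(\alpha,\theta)^p]<\infty$ for all $p<(\alpha+1)/\theta$—the range guaranteed by the hypothesis since $\phi_\alpha(p\theta)<\infty$ forces $p\theta<\alpha+1$. Without this moment estimate (or an equivalent derivation), your recursion is not well-founded. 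If you supply $\E[W_1^p]<\infty$ as a separate lemma, your argument goes through; as written it does not.
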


The proposition will follow from classical results once the following lemma is established:

\begin{lemma}
\label{lem:pth_moment}
 Let $\theta\in (\alpha,\alpha+1)$. Then,
 \[
  \E\left[(W_1(\alpha,\theta))^p\right] < \infty\quad\text{ for every $p<(\alpha+1)/\theta$}.
 \]
\end{lemma}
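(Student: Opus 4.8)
The plan is to reduce the claim to a moment estimate on the heavy-tailed jumps of the $\alpha$-stable L\'evy process $\zeta$, using the representation of $\nu_\alpha$ in terms of $\tau^{-1}$ and the jumps $\Dzeta_\tau$. Recall that by definition $W_1(\alpha,\theta) = \sum_{i\ge1} (Z_\alpha(i))^\theta$, and under $\nu_\alpha$ the vector $(Z_\alpha(i))_{i\ge1}$ is distributed as $\Dzeta_\tau$ biased by $\tau^{-1}$ and normalized by $\E[\tau^{-1}]$. So the statement to prove becomes
\[
  \E\mB[{ \frac1\tau \mB({\sum_{t\le\tau} (\Delta\zeta_t)^\theta}^p } < \infty \quad\text{for } p < (\alpha+1)/\theta,
\]
where the sum ranges over the jump times of $\zeta$ before $\tau$. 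Since $\tau^{-1}$ is bounded on $\{\tau\ge\ve\}$ and has moments of all orders near $0$ (its Laplace transform is $e^{-(\lambda/C)^{1/\alpha}}$, stretched-exponentially decaying, so $\E[\tau^{-q}]<\infty$ for all $q>0$), a Cauchy--Schwarz or H\"older split lets us forget the $\tau^{-1}$ factor and focus on bounding $\E\big[\big(\sum_{t\le\tau}(\Delta\zeta_t)^\theta\big)^{pq}\big]$ for a slightly larger exponent.

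First I would isolate the large jumps from the small ones. For the small jumps (say of size $\le1$), the quantity $\sum_{t\le\tau,\,\Delta\zeta_t\le1}(\Delta\zeta_t)^\theta$ has all moments finite whenever $\theta>\alpha-1$: indeed $\sum_{t\le1}(\Delta\zeta_t)^\theta \idd{\Delta\zeta_t\le1}$ is an infinitely divisible random variable whose L\'evy measure $\int x^\theta\,\idd{x\le1}\,\frac{dx}{x^{\alpha+1}}$ has finite mass and all moments (it is compactly supported), and one controls the contribution up to $\tau$ by the moments of $\tau$ together with a maximal-type argument or a direct computation on the compensated sum. Since $\theta\in(\alpha,\alpha+1)$ we certainly have $\theta>\alpha-1$, so the small-jump part contributes no constraint. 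The binding constraint comes entirely from the large jumps: the number of jumps of size $\ge 1$ before $\tau$ is finite, and each such jump $x$ has tail $\P(\Delta\zeta_t\ge x)\asymp x^{-\alpha}$, so $x^\theta$ has a tail like $x^{-\alpha/\theta}$ and thus a finite $p$-th moment exactly when $p<\alpha/\theta$; but integrating against the intensity $\frac{1}{1+x}\pi(dx)$ as in Proposition~\ref{prop:levy} gains an extra power of $x^{-1}$, improving this to $p<(\alpha+1)/\theta$, which is precisely the stated range.

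Concretely, the cleanest route is to apply the extension of Proposition~\ref{prop:levy} (the one mentioned after its statement, allowing additive functionals of several jumps simultaneously) to compute $\E\big[\tfrac1\tau\big(\sum_{t\le\tau}(\Delta\zeta_t)^\theta\big)^m\big]$ for integer $m$. Expanding the $m$-th power produces sums over ordered tuples of jump times, and Proposition~\ref{prop:levy}'s extension turns each such term into an integral of the form
\[
  \int_{(0,\infty)^k} \prod_{j} x_j^{\theta m_j}\,\frac{1}{1+x_1+\cdots+x_k}\,\pi(dx_1)\cdots\pi(dx_k)
\]
times a bounded quantity (a biased expectation of $\tau$-type functionals, which is finite by the moment bounds on $\tau$). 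Each such integral converges iff every partial exponent stays in the integrability window, and the worst case is a single jump carrying all the mass, i.e.\ $\int x^{\theta m}\frac{1}{1+x}\frac{dx}{x^{\alpha+1}}<\infty$, which holds iff $\theta m<\alpha+1$. Hence $\E\big[\tfrac1\tau\big(\sum_{t\le\tau}(\Delta\zeta_t)^\theta\big)^m\big]<\infty$ for all integers $m<(\alpha+1)/\theta$, and interpolating (H\"older between consecutive integers, or directly redoing the argument with $x^{\theta p}$ in place of $x^{\theta m}$) gives the full range $p<(\alpha+1)/\theta$. Dividing by $\E[\tau^{-1}]$ yields $\E[(W_1(\alpha,\theta))^p]<\infty$.

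The main obstacle I anticipate is bookkeeping the multi-jump terms rigorously: one must make sure that the cross terms (several moderate jumps rather than one big jump) are genuinely dominated by the single-jump term, and that the auxiliary $\tau$-dependent factors produced by the Proposition~\ref{prop:levy} extension are uniformly bounded — this requires the stretched-exponential decay of $\E[e^{-\lambda\tau}]$ and hence finiteness of all negative moments of $\tau$, which is standard for stable subordinator hitting times but should be invoked explicitly. A cleaner alternative that sidesteps the combinatorics is to use subadditivity: write $\sum_{t\le\tau}(\Delta\zeta_t)^\theta \le \big(\sum_{t\le\tau}(\Delta\zeta_t)^{\theta\wedge1}\big)^{\max(\theta,1)/\ (\theta\wedge1)}\cdots$ — but this loses constants and the sharp exponent, so I would only fall back on it if the direct computation gets unwieldy. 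Given Proposition~\ref{prop:levy} is already available, the direct route should go through cleanly.
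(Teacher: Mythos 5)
Your plan has a structural problem that makes it vacuous before any of the bookkeeping issues arise: since $\theta\in(\alpha,\alpha+1)$ and $\alpha>1$, we have $(\alpha+1)/\theta < (\alpha+1)/\alpha = 1+1/\alpha < 2$. Hence the only integer $m$ with $m<(\alpha+1)/\theta$ is $m=1$, so the scheme ``compute $\E[\tau^{-1}(\sum_{t\le\tau}(\Delta\zeta_t)^\theta)^m]$ for integers $m$ in the range and then interpolate'' never gets off the ground --- the range of admissible integers is just $\{1\}$, which only gives $\E[W_1]<\infty$, already known. The whole content of the lemma is the moment range $p\in(1,(\alpha+1)/\theta)\subset(1,2)$, i.e.~strictly non-integer $p$, and your suggestion to ``directly redo the argument with $x^{\theta p}$ in place of $x^{\theta m}$'' doesn't make sense for non-integer $p$ because $(\sum_i a_i)^p$ does not expand as a sum of products.

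Two further issues in the route you sketch. First, the extension of Proposition~\ref{prop:levy} (mirroring Proposition~\ref{prop:rw_extension}) produces a factor $\tau^{-k}$ on the left when one picks out $k$ \emph{distinct} jumps, not a single factor $\tau^{-1}$; the integral $\int\prod_j x_j^{\theta m_j}\,(1+\sum_j x_j)^{-1}\,\pi(dx_1)\cdots\pi(dx_k)$ you write down therefore does not equal the cross term $\E[\tau^{-1}\sum_{\text{distinct}}\ldots]$, and the gap between $\tau^{-1}$ and $\tau^{-k}$ cannot be bridged by a simple H\"older argument since $\tau$, being $1/\alpha$-stable, has $\E[\tau^s]=\infty$ for all $s\ge1/\alpha<1$. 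Second, your opening suggestion to strip off the $\tau^{-1}$ factor via H\"older and estimate $\E[(\sum_{t\le\tau}(\Delta\zeta_t)^\theta)^{pq'}]$ for $q'>1$ loses exactly the improvement that $\tau^{-1}$ provides: unbiased, the jump intensity is $\pi(dx)$ rather than $(1+x)^{-1}\pi(dx)$, so the integrability threshold drops from $p\theta<\alpha+1$ to $p\theta<\alpha$, strictly worse than the claim and in fact vacuous since $\theta>\alpha$. The paper's proof avoids all of this by estimating $\P(W_1(\alpha,\theta)>x)$ directly: it splits on whether $\tau>t_x$ or $\tau\le t_x$, bounds the first piece using the density of $\tau$ at infinity and the $\tau^{-1}$ weight, and bounds the second piece via a ``one big jump'' dichotomy (Nagaev-type bounds on the truncated sum, plus a Chernoff bound showing a big jump forces the process to overshoot and so cannot coexist with $\tau\le t_x$), optimizing over $t_x$ to get the polynomial tail $x^{-(1+\alpha)/\theta+\ep}$. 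That kind of direct tail estimate, or an equivalent, is needed; the moment-computation route you outline does not reach it.
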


\begin{proof}[Proof of Proposition~\ref{prop:Lp_convergence}]
 Let $p$ and $\theta$ as in the statement of the proposition. A classical result by Biggins \cite[Theorem~1]{Biggins1992} then gives the required convergence, provided $\E\left[(W_1(\alpha,\theta))^p\right] < \infty$. But by the hypothesis on $p$ and $\theta$, we necessarily have $p\theta < \alpha+1$, since $\phi_\alpha(\lambda) = +\infty$ for $\lambda \ge \alpha+1$. Lemma~\ref{lem:pth_moment} then implies the result.
\end{proof}

\begin{proof}[Proof of Lemma~\ref{lem:pth_moment}]
 Throughout the proof, we denote by $C$ and $C_\ep$ arbitrary positive, finite constants, whose values may change from line to line. They may depend on $\alpha$ and the constant $C_\ep$ may furthermore depend on $\ep>0$ introduced later. Recall the definition of the multiplicative cascade in terms of a spectrally positive $\alpha$-stable L\'evy process $\zeta$. It is well-known that $\tau$, the hitting time of $-1$ by $\zeta$, is a positive $1/\alpha$-stable random variable. We collect some well-known estimates on its density, see e.g. \cite[Chapter~2.5]{Zolotarev1986}:
 \begin{align}
  \label{eq:tau_infty}
  \P(\tau \in dt) \sim Ct^{-1/\alpha-1},\quad t\to\infty\\
  \label{eq:tau_zero}
  \P(\tau \in dt) = \exp(-(1+o(1)) t^{-1/\alpha-1}),\quad t\to 0.
 \end{align}
 
We now bound the tail of $W_1(\alpha,\theta)$.
 Let $x\mapsto t_x$, $\R_+\to\R_+$ be an arbitrary function for the moment, whose value we will choose later on. 
 Then, for all $x\ge0$,
 \begin{align}
  \nonumber
    \P(W_1(\alpha,\theta) > x) &= C\E\left[\frac 1 \tau \Ind_{\sum_{s\le \tau} (\Delta \zeta_s)^\theta > x}\right]\\
 \label{eq:dion}
 &= C\E\left[\frac 1 \tau \Ind_{\sum_{s\le \tau} (\Delta \zeta_s)^\theta > x,\,\tau > t_x}\right] + C\E\left[\frac 1 \tau \Ind_{\sum_{s\le \tau} (\Delta \zeta_s)^\theta > x,\,\tau \le t_x}\right].
 \end{align}
By \eqref{eq:tau_infty}, we bound the first summand on the right-hand side of \eqref{eq:dion} for large $x$ by
\begin{equation}
\label{eq:celine}
 \E\left[\frac 1 \tau \Ind_{\sum_{s\le \tau} (\Delta \zeta_s)^\theta > x,\,\tau > t_x}\right] \le \E\left[\frac 1 \tau \Ind_{\tau > t_x}\right] \le C(t_x)^{-(1/\alpha + 1)}.
\end{equation}
As for the second summand, we use H\"older's inequality to get for every $\ep>0$,
\begin{equation}
\label{eq:dicaprio}
 \E\left[\frac 1 \tau \Ind_{\sum_{s\le \tau} (\Delta \zeta_s)^\theta > x,\,\tau \le t_x}\right] \le \E\left[\frac 1 {\tau^{1/\ep}}\right]^{\ep} \P\left(\sum_{s\le \tau} (\Delta \zeta_s)^\theta > x,\,\tau \le t_x\right)^{1-\ep}.
\end{equation}
By \eqref{eq:tau_zero},
\begin{equation}
\label{eq:lemoine}
 \E\left[\frac 1 {\tau^{1/\ep}}\right]^{\ep} \le C_\ep.
\end{equation}
We continue with bounding the probability on the right-hand side in \eqref{eq:dicaprio}. Since $\zeta$ is $\alpha$-stable, there is a constant $\rho\in(0,1)$, such that $\P(\zeta_s \le 0) = \rho$ for all $s\ge 0$. Applying this with the Markov property at time $\tau$, we get
\begin{align}
\nonumber
&\rho \P\left(\sum_{s\le \tau} (\Delta \zeta_s)^\theta > x,\,\tau \le t_x\right)\\
\nonumber
 &= \P\left(\sum_{s\le t_x} (\Delta \zeta_s)^\theta > x,\,\tau \le t_x,\,\zeta_{t_x} \le -1\right)\\
\nonumber
 &\le \P\left(\sum_{s\le t_x} (\Delta \zeta_s)^\theta > x,\,\max_{s\le\tau} (\Delta \zeta_s)^\theta \le \delta x\right) + \P\left(\max_{s\le t_x} (\Delta \zeta_s)^\theta > \delta x,\,\zeta_{t_x} \le -1\right),
\end{align}
where $\delta$ is some small positive constant.
Classical large-deviation estimates for sums of iid heavy-tailed random variables \cite{Nagaev1979} yield that the first term on the right-hand side is for large $x$ smaller than any fixed polynomial in $x$ as long as $t_x \le x^{\alpha/\theta - \ep}$ and $\delta$ is sufficiently small. As for the second term on the right-hand side, denote by $\widetilde\zeta$ a process defined as $\zeta$, except that all the jumps greater than $(\delta x)^{1/\theta}$ are suppressed. Then, by independence of the large and small jumps,
\[
 \P\left(\max_{s\le t_x} (\Delta \zeta_s)^\theta > \delta x,\,\zeta_{t_x} \le -1\right) \le \P(-\widetilde\zeta_{t_x} \ge (\delta x)^{1/\theta}).
\]
One easily checks that as $x\to\infty$, $|\E[\widetilde\zeta_{t_x}]| \sim t_x C (\delta x)^{-(\alpha-1)/\theta} = o(\delta x)^{1/\theta})$ as long as $t_x = o(x^{\alpha/\theta})$. Hence, the event in the above probability is a large deviation event. By the finiteness of the moment generating function of $-\widetilde \zeta$ for positive values of the argument, the Chernoff bound easily implies that the probability $\P(-\widetilde\zeta_{t_x} \ge (\delta x)^{1/\theta})$ decays streched exponentially in $x$, as long as  $t_x \le x^{\alpha/\theta - \ep}$.

Summarizing the previous arguments, the probability
\[
 \P\left(\sum_{s\le \tau} (\Delta \zeta_s)^\theta > x,\,\tau \le t_x\right) 
\]
decreases superpolynomially in $x$ as long as  $t_x \le x^{\alpha/\theta - \ep}$. Together with \eqref{eq:dion}, \eqref{eq:celine}, \eqref{eq:dicaprio} and \eqref{eq:lemoine}, we have for every $\ep > 0$, for large $x$,
\[
 \P(W_1(\alpha,\theta) > x) \le C_\ep (t_x)^{-(1/\alpha + 1)},
\]
as long as $t_x \le x^{\alpha/\theta - \ep}$. Choosing $t_x$ to be equal to this bound, this gives for every $\ep>0$, for large $x$,
\(
\P(W_1(\alpha,\theta) > x) \le C_\ep x^{-(1+\alpha)/\theta +\ep}.
\)
This readily implies the statement of the lemma.
\end{proof}

\section{Convergence towards the continuous multiplicative cascade}
\label{sec:cv L infty}
In this section we prove our main result Theorem \ref{th:main}. We do it step by step in order to emphasize the different requirements for the different types of convergence.
\subsection{Finite dimensional convergence}

\begin{proposition}[Finite dimensional convergence] \label{prop:fidi} With the notation of Theorem \ref{th:main} we have the following convergence in distribution in the sense of finite-dimensional marginals 
$$ ( p^{-1}\chi\0p(u) : u \in \U) \quad \xrightarrow[p\to\infty]{(d)} \quad  ( Z_\alpha(u) : u \in \U).$$
\end{proposition}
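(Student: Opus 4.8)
The plan is to upgrade the first-generation convergence of Proposition~\ref{prop:1ere gen} to convergence of all finite-dimensional marginals by exploiting the Markovian branching structure of the perimeter cascade $\chi\0p$, which is the discrete counterpart of the branching property defining the continuous cascade $Z_\alpha$. The key structural input, recalled in the introduction and coming from \cite{BBG11}, is that conditionally on the gasket of a loop-decorated quadrangulation distributed according to $\prob\0p_{(n;g,h)}$ — equivalently, conditionally on the first generation $(\chi\0p(i))_{i\ge1}$ — the loop-decorated quadrangulations filling the holes are \emph{independent}, and the one filling a hole of half-perimeter $\chi\0p(i)$ is distributed according to $\prob\0{\chi\0p(i)}_{(n;g,h)}$. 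Consequently, the subtree of $\chi\0p$ rooted at $i$ is, conditionally on $\chi\0p(i)=q$, an independent copy of $\chi\0q$; and this recursion propagates: conditionally on $(\chi\0p(u))_{|u|\le n}$, the families $(\chi\0p(uj))_{j\ge1}$ for $|u|=n$ are independent, with $(\chi\0p(uj)/\chi\0p(u))_{j\ge1}$ distributed (given $\chi\0p(u)=q$) as the first generation of $\chi\0q$. This is precisely the discrete analogue of the identity $Z_\alpha(uj)=Z_\alpha(u)\cdot\xi_j\0u$ with the $(\xi\0u)$ i.i.d.\ of law $\nu_\alpha$.

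The proof then proceeds by induction on the number of generations $n$. The base case $n=1$ is Proposition~\ref{prop:1ere gen} (extended trivially from $\ell^\infty(\N^*)$-test functions to finitely many coordinates). For the inductive step, fix a finite set of vertices; without loss of generality it is contained in the first $n+1$ generations, and by a further reduction it suffices to treat, for each finite collection $u^{(1)},\dots,u^{(m)}$ of vertices at generation $n$ and indices $j_1,\dots,j_m$, the joint law of $(p^{-1}\chi\0p(u^{(\ell)}))_\ell$ together with $(p^{-1}\chi\0p(u^{(\ell)}j_\ell))_\ell$. By the branching property above and the induction hypothesis, $(p^{-1}\chi\0p(u))_{|u|\le n}$ converges in distribution to $(Z_\alpha(u))_{|u|\le n}$; I would then use a Skorokhod-type coupling to make this convergence almost sure, and on the event $\{\chi\0p(u^{(\ell)})=q_\ell^{(p)}\}$ with $q_\ell^{(p)}/p\to Z_\alpha(u^{(\ell)})>0$, apply Proposition~\ref{prop:1ere gen} to the independent sub-maps: each rescaled first generation $(q_\ell^{(p)})^{-1}(\chi\0p(u^{(\ell)}j))_{j}$ converges in distribution to an independent copy of $(Z_\alpha(j))_j$, hence $p^{-1}\chi\0p(u^{(\ell)}j_\ell)=\tfrac{q_\ell^{(p)}}{p}\cdot(q_\ell^{(p)})^{-1}\chi\0p(u^{(\ell)}j_\ell)$ converges to $Z_\alpha(u^{(\ell)})\cdot\xi_{j_\ell}^{(u^{(\ell)})}=Z_\alpha(u^{(\ell)}j_\ell)$, jointly and with the correct independence structure. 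Passing to expectations of bounded continuous test functions (dominated convergence) and using a monotone-class/Stone--Weierstrass argument to go from product test functions to general ones yields the claimed finite-dimensional convergence.

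The main technical point to handle carefully is the behaviour when some $Z_\alpha(u^{(\ell)})$ is $0$ — i.e.\ the vertex $u^{(\ell)}$ corresponds to ``no loop'' in the limit — and, relatedly, the case $\chi\0p(u^{(\ell)})=0$ or $\chi\0p(u^{(\ell)})$ bounded: here Proposition~\ref{prop:1ere gen} does not directly give convergence of the normalized offspring, but it does not need to, since then $p^{-1}\chi\0p(u^{(\ell)})\to0$ forces $p^{-1}\chi\0p(u^{(\ell)}j)\to0$ as well (the half-perimeters are monotone along branches, $\chi\0p(uj)\le\chi\0p(u)$), which matches $Z_\alpha(u^{(\ell)}j)=0$; one checks that the limit law of the continuous cascade assigns no mass to the boundary between these regimes, so the split into $\{Z_\alpha(u^{(\ell)})>0\}$ and $\{Z_\alpha(u^{(\ell)})=0\}$ is consistent under weak limits. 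Beyond this, the only subtlety is bookkeeping in the coupling — ensuring that the conditional independence of sub-maps given the truncated tree is genuinely preserved after the Skorokhod representation — which is routine. I do not expect any genuinely hard obstacle here; the real work of the theorem (the explicit Biggins transform via Theorem~\ref{th:rw}, and the strengthening to $\ell^\infty(\U)$) lies outside this proposition.
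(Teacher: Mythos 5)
Your plan is essentially the paper's proof: iterate Proposition~\ref{prop:1ere gen} along the gasket decomposition's Markov property, using that $\chi\0p(u)\to\infty$ in probability for any fixed $u$, and patch up the joint law by the conditional independence of the sub-maps. The paper presents this a little more informally (no explicit Skorokhod coupling, just iterating the first-generation convergence along a fixed finite subtree of~$\U$), but the substance is the same.

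One remark about the "technical point" you flag at the end: the whole discussion of what happens when $Z_\alpha(u^{(\ell)})=0$ is moot, because for any \emph{fixed} $u\in\U$ one has $Z_\alpha(u)>0$ almost surely (a product of finitely many a.s.\ positive factors $\xi_{j}^{(v)}$, each of which is a jump size of the stable process $\zeta$, which has infinitely many jumps before $\tau>0$). The paper uses exactly this to justify $\chi\0p(k_0)\to\infty$ in probability. Moreover, the argument you offer for that case is incorrect: the half-perimeters are \emph{not} monotone along branches of the cascade. A hole of half-perimeter $q$ is filled by a loop-decorated quadrangulation with boundary $2q$, and a bipartite map with small boundary can perfectly well contain a face of much larger degree, so $\chi\0p(uj)>\chi\0p(u)$ is possible. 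Fortunately none of this is needed; dropping that paragraph and replacing it by the observation that $Z_\alpha(u)>0$ a.s.\ gives a clean and correct proof.
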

\begin{proof}
This is a more or less straightforward corollary of the convergence of the first generation (Proposition \ref{prop:1ere gen}) together with the Markov property in the gasket decomposition. Recall the notation of the introduction and in particular $\chi\0p(\varnothing) = p$, recall also that $(\xi_i\0u\! : i \ge 1)_{u \in \U}$ are independent random variables distributed according to $\nu_\alpha$ and indexed by $ \U$.  Fix $k_0 \geq 1$. It follows from Proposition \ref{prop:1ere gen} that we have 
$$ \m({\frac{\chi\0p(i)}{\chi\0p(\varnothing)}}_{1 \leq i \leq k_0} \xrightarrow[p\to\infty]{(d)} \m({\xi_i^{(u)}}_{1 \leq i \leq k_0}.$$ Now, it follows from the gasket decomposition that conditionally on the perimeters $( \chi\0p(i) : 1 \leq i \leq k_0)$ of the first generation of the loops, the loop-decorated quadrangulations filling in the first $k_0$ holes (ranked in decreasing order of their perimeters) in the gasket are independent and distributed according to $ \prob_{(n;g,h)}^{\chi\0p(i)}$. Since we have $\chi\0p(k_0) \to \infty $ in probability (indeed $  \frac{1}{p}\chi\0p(k_0) \to Z_\alpha(k_0)$ in distribution and $Z_\alpha(k_0) >0$ almost surely) we can then apply Proposition \ref{prop:1ere gen} once more to these second generation quadrangulations to deduce that
$$ \m({\frac{\chi\0p( ij)}{\chi\0p(i)}}_{1 \leq j \leq k_0} \xrightarrow[p\to\infty]{(d)} \m({\xi^{(i)}_{j}}_{ 1 \leq j \leq k_0},$$
and these convergences in law hold jointly for all $i \in \varnothing \cup \{1,2, ... , k_0\}$. Iterating the above argument we get that for any finite subtree $ \mathfrak{t} \subset \U$ containing the root and for any vertex $u,ui \in  \mathfrak{t}$ we have the joint convergences $ \chi\0p(ui)/\chi\0p(u) \to \xi^{(u)}_i$ in distribution. This implies the finite dimensional convergence. 
\end{proof}

\subsection{$\ell^\infty$ convergence generation by generation}
\newcommand*{\sumu}[1]{\sum_{u\in\U:\abs{u}#1}\!\!}
In this subsection we strengthen Proposition \ref{prop:fidi} into a convergence in $\ell^{\infty}$ for any finite number of generations. Indeed, the convergence of Proposition \ref{prop:fidi} does not prevent $\chi\0p$ from having $\chi\0p(u) \asymp p$ at some vertex $u \to \infty$ (i.e.\ $u$ leaves any fixed finite subset of $\U$) as $p \to \infty$.
The following statement shows that this is impossible if the height of $u$ stays bounded.
For any $k \geq 1$ let $\U_k$ be subtree of the first $k$ generations in $ \U$.

\begin{proposition}\label{prop:convergence Uk}
For any $k \geq 1$ we have the following convergence in distribution in $\ell^\theta(\U_k)$ for all $\theta > \alpha$
$$ (p^{-1}\chi\0p(u) : u \in \U_k) \quad \xrightarrow[p\to\infty]{(d)} \quad ( Z_\alpha(u) : u \in \U_k).$$
\end{proposition}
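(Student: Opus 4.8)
The plan is to upgrade the finite-dimensional convergence of Proposition~\ref{prop:fidi} to convergence in $\ell^\theta(\U_k)$ for fixed $k$ and $\theta>\alpha$. Since $\ell^\theta(\U_k)$ convergence of nonnegative sequences follows from finite-dimensional convergence plus convergence of the $\ell^\theta$-norms, the crux is to show that
$$\E\Big[\sumu{=k} \big(p^{-1}\chi\0p(u)\big)^\theta\Big] \xrightarrow[p\to\infty]{} \E\Big[\sumu{=k}\big(Z_\alpha(u)\big)^\theta\Big] = \phi_\alpha(\theta)^k < \infty,$$
for $\theta\in(\alpha,\alpha+1)$, and then to handle general $\theta>\alpha$ by a truncation/domination argument. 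The right-hand side is finite and equals $\phi_\alpha(\theta)^k$ by the multiplicative structure of the cascade and the Biggins transform computed in Section~\ref{sec:biggins_transform}. So the real work is the discrete side.

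\textbf{Step 1: the discrete Biggins transform, first generation.} I would first establish, using the exact identity \eqref{eq:apresluk}--\eqref{eq:1st gen} together with Theorem~\ref{th:rw}, a closed form for $\E[\sum_{i\ge 1}(p^{-1}\chi\0p(i))^\theta]$, or rather for its analogue with $\Deg_f(\Bol)$ in place of $\chi\0p$. Indeed, applying \eqref{eq:apresluk} with $\varphi(\mathbf{x}) = \sum_i x_i^\theta$ (which is a positive, cyclically-invariant additive functional), one gets
$$\E\Big[\sum_i\big(\tfrac12 \Deg_f(\Bol)(i)\big)^\theta\Big] = \frac{1}{\E[1/(1+L_p)]}\,\E\Big[\frac{1}{1+L_p}\sum_{i=1}^{T_p}(X_i+1)^\theta\Big].$$
The inner sum is over \emph{all} steps but only nonzero steps $X_i\ge 1$ contribute beyond the $X_i=-1,0$ terms, which give a bounded contribution; after controlling $L_p$ by $T_p$ via Lemma~\ref{lem:large deviation} (so that $1/(1+L_p)\approx \mujs(0)/T_p$ with overwhelming probability and with uniform integrability of $p^\alpha/L_p$), one is reduced to $\E[\frac1{T_p}\sum_{i=1}^{T_p} f(X_i)]$ with $f(x)=(x+1)^\theta\idd{x\ge 1}$ (the $T_p-1$ versus $T_p$ discrepancy being negligible), and Theorem~\ref{th:rw} turns this exactly into $\E[f(X_1)\,p/(p+X_1)]$. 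Plugging in the tail $\P(X_1\ge x)\sim Cx^{-\alpha}$, a dominated-convergence argument shows $p^{-\theta}\cdot p\,\E[f(X_1)p/(p+X_1)] \to \int_0^\infty \frac{x^\theta}{1+x}\cdot\frac{C}{\Gamma(-\alpha)}x^{-\alpha-1}\,dx$ for $\theta\in(\alpha,\alpha+1)$, which after dividing by the corresponding limit of $\E[1/L_p]^{-1}p^{-\alpha}$ (computed the same way, giving $\E[1/\tau]$) yields exactly $\phi_\alpha(\theta)$. I expect this computation to mirror almost line-for-line the continuous computation in the proof of \eqref{eq:biggins_transform}, which is reassuring. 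This is Lemma~\ref{lem:cv Lp by generation} referenced in the introduction.

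\textbf{Step 2: iterate over generations.} Conditionally on the first generation perimeters, the subtrees are independent loop-decorated quadrangulations of the same type, so by the branching property
$$\E\Big[\sumu{=k}\big(p^{-1}\chi\0p(u)\big)^\theta\Big] = \E\Big[\sum_{i\ge1}\big(p^{-1}\chi\0p(i)\big)^\theta\,\Phi_{k-1}\big(\chi\0p(i)\big)\Big],\qquad \Phi_{k-1}(q):=\E\Big[\sum_{|v|=k-1}\big(q^{-1}\chi\0q(v)\big)^\theta\Big].$$
By induction $\Phi_{k-1}(q)\to\phi_\alpha(\theta)^{k-1}$ as $q\to\infty$; combined with Step~1 and the joint convergence $p^{-1}\chi\0p(i)\to Z_\alpha(i)$ with $\chi\0p(i)\to\infty$ for each $i$, plus a uniform bound $\sup_{p,q}\Phi_{k-1}(q)<\infty$ (again from Step~1 applied uniformly) to justify passing to the limit, one gets $\to \phi_\alpha(\theta)\cdot\phi_\alpha(\theta)^{k-1} = \phi_\alpha(\theta)^k$. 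Uniform integrability of the summands — needed to exchange limit and expectation when going from the $\theta\in(\alpha,\alpha+1)$ case to general $\theta>\alpha$ — follows because for $\theta<\theta'<\alpha+1$ with $\theta'\in(\alpha,\alpha+1)$ the $\theta'$-sums are bounded in $L^{\theta'/\theta}$; so it suffices to prove the norm convergence for a single $\theta$ in each relevant range and then monotonicity/interpolation in $\theta$ gives the rest, together with finite-dimensional convergence to upgrade to $\ell^\theta(\U_k)$-convergence in distribution.

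\textbf{Main obstacle.} The delicate point is the interchange of limits in Step~2 and the uniformity of all the estimates in the perimeter: I need $\Phi_{k-1}(q)$ bounded uniformly in $q\ge 1$ (not just convergent), and I need the bound in Step~1 relating $L_p$ to $T_p$ to be good enough, uniformly in $p$, that the error terms (the $X_i\in\{-1,0\}$ contributions, the $T_p-1$ vs.\ $T_p$ and $1+L_p$ vs.\ $L_p$ shifts, the $\ell^\infty$-closeness of $\chi\0p$ to $\frac12\Deg_f(\Bol)$) all vanish after dividing by $p^\theta$. The large-deviation bounds of Lemma~\ref{lem:large deviation} and the uniform integrability of $p^\alpha/L_p$ are exactly tailored for this, so the obstacle is bookkeeping rather than conceptual: carefully separating the "main term" $\sum_{X_i\ge1}$ from bounded remainders and verifying that Theorem~\ref{th:rw} can be applied to the truncated functional with the $1/(T_p-1)$ weight it requires.
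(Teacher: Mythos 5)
Your proof is correct and follows essentially the same route as the paper's. Both arguments hinge on the key moment estimate that you call Step 1 and Step 2 (this is precisely the paper's Lemma~\ref{lem:cv Lp by generation}: $\E[\sum_{|u|=k}(p^{-1}\chi\0p(u))^\theta]\to\phi_\alpha(\theta)^k$ for $\theta\in(\alpha,\alpha+1)$, proved via \eqref{eq:apresluk}, Theorem~\ref{th:rw}, the large deviation bounds of Lemma~\ref{lem:large deviation}, and induction over generations using the Markov property), combined with the finite-dimensional convergence of Proposition~\ref{prop:fidi}. The only minor divergence is in how you then upgrade to $\ell^\theta(\U_k)$ convergence in distribution: you invoke a Scheffé-type principle (fidi convergence plus convergence of the expected $\ell^\theta$-norms gives $\ell^\theta$ convergence, via Skorokhod representation and generalized dominated convergence), whereas the paper uses an explicit tail-truncation argument with $K$-Lipschitz test functions. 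The two assemblies are equivalent in effort and conclusion; yours is perhaps slightly more compact to state but requires you to carefully justify the Skorokhod/Scheffé step rather than leave it as an assertion. One small imprecision to watch: "convergence of the $\ell^\theta$-norms" should mean convergence of their expectations (which is what you prove), and the extension from $\theta\in(\alpha,\alpha+1)$ to all $\theta>\alpha$ is most cleanly done via the continuous embedding $\ell^\theta(\U_k)\hookrightarrow\ell^{\theta'}(\U_k)$ for $\theta\le\theta'$ (as the paper does), rather than the "$L^{\theta'/\theta}$ boundedness/interpolation" phrasing you use, which conflates the uniform-integrability device inside Step 2 with the extension to larger exponents.
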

As it will turn out, the last proposition is a consequence of the finite-dimensional convergence (Proposition \ref{prop:fidi}) together with a convergence in mean of the sum of powers in the cascade. More precisely we will use:
\begin{lemma}\label{lem:cv Lp by generation}
For any $\theta \in (\alpha, \alpha+1)$ and for any $k \geq 1$ we have 
$$  \exptm{ \sumu{=k} \m({ p^{-1} \chi\0p(u)}^\theta} \quad \cvg{}{p\to\infty} \quad \exptm{\sumu{=k} \mb({Z_\alpha(u)}^\theta} = \phi_\alpha(\theta)^k.$$
\end{lemma}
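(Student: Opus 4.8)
The plan is to induct on $k$, with the key input being the discrete analogue of the Biggins transform at the first generation, computed via Theorem \ref{th:rw}. For the base case $k=1$, I would start from \eqref{eq:1st gen} applied to the (continuous bounded truncation of the) test functional $\varphi(\mathbf{x}) = \sum_{i\ge 1} x_i^\theta$; since this functional is not bounded I would first prove an $L^\theta(\natural^*)$ tightness/equi-integrability statement for $p^{-1}\boldX$ so that one can pass to the limit, but the cleanest route is instead to compute the discrete quantity $\E[\sum_{|u|=1}(p^{-1}\chi\0p(u))^\theta]$ exactly (up to the $o(1)$ coming from the $\pm 2$ discrepancy between $\frac12\Deg_f(\Bol)$ and the first generation of $\chi\0p$, and from $L_p$ versus $T_p$) and show it converges to $\phi_\alpha(\theta)$. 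Using \eqref{eq:apresluk} and the fact that $\boldX = (X_i+1)_{i\le T_p}$, the sum $\sum_i (X_i+1)^\theta$ is exactly an additive functional $\sum_{i=1}^{T_p} f(X_i)$ with $f(x)=(x+1)^\theta$. Here Theorem \ref{th:rw} applies verbatim: $\E[\frac1{T_p-1}\sum_{i=1}^{T_p}f(X_i)] = \E[f(X_1)\frac{p}{p+X_1}] = \E[(X_1+1)^\theta \frac{p}{p+X_1}]$. One then has to convert the $\frac{1}{1+L_p}$-bias in \eqref{eq:apresluk} into the $\frac{1}{T_p-1}$-bias of Theorem \ref{th:rw}; Lemma \ref{lem:large deviation} controls the ratio $L_p/T_p \to \mujs(0)$ and the equi-integrability of $p^\alpha/L_p$, exactly as in the proof of Proposition \ref{prop:1ere gen}, so after renormalising by $p^{-\theta}$ and using $\E[1/L_p]^{-1}$ this yields
$$\E\Big[\sum_{|u|=1}\big(p^{-1}\chi\0p(u)\big)^\theta\Big] \longrightarrow \frac{\mujs(0)\,\E[p^\alpha T_p^{-1}(X_1+1)^\theta\, p/(p+X_1)\, p^{-\theta}]}{\mujs(0)\,\E[p^\alpha T_p^{-1}]}.$$
Since $p^{-\alpha}T_p \to \tau$ and $p^{-1}(X_1+1)^{(p)}$ are the discrete avatars of $\tau$ and a jump of $\zeta$, the right-hand side converges to $\E[\tau^{-1}\sum_{t\le\tau}(\Delta\zeta_t)^\theta]/\E[\tau^{-1}]$, which by the proof of \eqref{eq:biggins_transform} equals $\phi_\alpha(\theta)$; this identifies the base case. (The hypothesis $\theta<\alpha+1$ is exactly what makes the limiting integral $\int x^{\theta-\alpha-1}/(1+x)\,dx$ finite, hence what makes all these expectations finite for large $p$.)

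For the inductive step, I would use the branching (Markov) property of the gasket decomposition exactly as in the proof of Proposition \ref{prop:fidi}: conditionally on the first generation $(\chi\0p(i))_{i\ge 1}$, the sub-quadrangulations filling the holes are independent with laws $\prob\0{\chi\0p(i)}_{(n;g,h)}$. Writing $\chi\0p(u) = \chi\0p(i)\cdot (\text{rescaled cascade inside hole } i)$ for $|u|=k$ with first coordinate $i$, the tower property gives
$$\E\Big[\sum_{|u|=k}\big(p^{-1}\chi\0p(u)\big)^\theta\Big] = \E\Big[\sum_{i\ge 1}\big(p^{-1}\chi\0p(i)\big)^\theta\cdot g_{k-1}\big(\chi\0p(i)\big)\Big],$$
where $g_{k-1}(q) := \E[\sum_{|v|=k-1}(q^{-1}\chi\0q(v))^\theta]$ is the generation-$(k-1)$ quantity, which by the induction hypothesis converges (as $q\to\infty$) to $\phi_\alpha(\theta)^{k-1}$. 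One then wants to combine this with the base-case computation; the natural way is to prove the statement jointly as a convergence of the expectation of $\sum_{|u|=k}(p^{-1}\chi\0p(u))^\theta$ by iterating the exact identity from Theorem \ref{th:rw} at each generation, i.e.\ the generation-by-generation analogue of \eqref{eq:apresluk}. Concretely, I would set up an exact recursion at the discrete level: the generation-$k$ sum equals $\frac{1}{\E[1/L_p]}\E[\frac{1}{L_p}\sum_{i\le T_p}(X_i+1)^\theta\, h_{k-1}^{(p)}(X_i+1)]$ with $h_{k-1}^{(p)}$ the (already renormalised) generation-$(k-1)$ expectation of a sub-map of the corresponding perimeter, and apply the extended identity / the same bias-conversion to peel off one generation, reducing to the $k-1$ case.

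The main obstacle I expect is the passage to the limit through unbounded functionals: the sum of $\theta$-th powers is not a bounded continuous function on $\ell^\infty(\natural^*)$, and the offspring vector $\boldX/p$ converges only in the $\ell^\infty$ topology, so one must supply uniform-in-$p$ control of the tails, namely equi-integrability of $\sum_{i\ge 1}(p^{-1}\chi\0p(i))^\theta$ and of the nested products arising in the recursion. The cleanest way around this is precisely the one sketched above — never use a weak-convergence argument for the unbounded functional, but instead compute the relevant first moments \emph{exactly} at each generation using Theorem \ref{th:rw}, and only then take the limit of these explicit expressions, using Lemma \ref{lem:large deviation} for the $L_p$-versus-$T_p$ bookkeeping and the convergence $p^{-\alpha}T_p\to\tau$, $p^{-1}\boldX\to\Dzeta$ from \eqref{eq:convdeuxtrucs}, together with dominated convergence justified by the explicit form of $f(x)=(x+1)^\theta$ and the polynomial tail $\prob(X_1\ge x)\sim Cx^{-\alpha}$ with $\theta<\alpha+1$. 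A secondary but routine point is keeping track of the $o(1)$ error from the $\ell^\infty$-distance $\le 2$ between the first generation of $\chi\0p$ and $\frac12\Deg_f(\Bol)$, which is negligible after division by $p$ and taking $p\to\infty$, using again $\chi\0p(i)\to\infty$ for fixed $i$.
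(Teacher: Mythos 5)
Your plan is essentially correct and, for the base case $k=1$, follows the paper's proof: start from \eqref{eq:apresluk}, apply Theorem~\ref{th:rw} with $f(x)=(x+1)^\theta$ to reduce the first moment to an expectation over a single step $X_1$, and use Lemma~\ref{lem:large deviation} for the $L_p$-versus-$T_p$ bookkeeping. (The paper implements that last step quantitatively, via a H\"older interpolation with an auxiliary exponent $\gamma>1$ chosen so that $\gamma\theta<\alpha+1$; you only gesture at it.) Where you diverge is the inductive step: you propose to re-deploy the random-walk coding and Theorem~\ref{th:rw} at every generation, plugging the normalised generation-$(k-1)$ quantity $G_{k-1}(X_i+1)$ into the additive functional. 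The paper instead stays entirely at the cascade level: it writes $G_{k+1}(p)=\E[\sum_i (p^{-1}\chi^{(p)}(i))^\theta G_k(\chi^{(p)}(i))]$, uses only that $G_k$ is uniformly bounded by some $C$ and satisfies $G_k(q)\le\phi_\alpha(\theta)^k+\varepsilon$ for $q\ge p_0$, splits according to $\chi^{(p)}(i)\ge p_0$ or not, and kills the small-label term by passing to an auxiliary exponent $\theta'\in(\alpha,\theta)$ and invoking the already-proved $k=1$ case for both $\theta$ and $\theta'$. Both routes work, but the paper's is leaner (no need to re-encode by random walks after the first generation) and, importantly, it makes explicit the control of the small-$X_1$/small-label regime, which your ``peel off one generation'' sketch elides: $G_{k-1}(X_1+1)$ is close to $\phi_\alpha^{k-1}$ only when $X_1$ is large, so you would still need the uniform bound on $G_{k-1}$ and the fact $\theta>\alpha$ to see that the small-$X_1$ contribution is $o(p^{-\alpha})$. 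One further small discrepancy: the random-walk quantity on the right of \eqref{eq:apresluk} sums over all gasket faces (holes \emph{and} plain quadrangles), so it is an overestimate of $\sum_{|u|=1}(\cdot)^\theta$; the paper dodges estimating the quadrangle excess by noting the inequality goes in the useful direction and obtaining the matching lower bound from Fatou and the finite-dimensional convergence, whereas your plan would need to bound that excess directly (which does work --- it is $O(p^{\alpha-\theta})=o(1)$ --- but is an extra check you did not flag).
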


\begin{proof}
We prove the convergence by induction on $k\geq 1$. Using the notation of Section \ref{sec:randomwalkcoding} and in particular  \eqref{eq:apresluk} as well as \eqref{eq:1st gen}  we have 
$$  \mathbb{E}\left[\sum_{|u|=k} (p^{-1} \chi^{(p)}(u))^{\theta}\right] \leq  \frac{1}{ \mathbb{E}[1/L_{p}]} \mathbb{E}\left[ \frac{1}{L_{p}}\sum_{1\leq i \leq T_{p}} \left(\frac{X_{i}+1}{p}\right)^{\theta}\right] + O(p^{-1}).$$
The inequality comes from the fact that some faces (of degree four) of the gasket are not holes surrounded by a loop and the $O(p^{-1})$ from the approximation $L_p \approx L_p+1$ (Recall that $p\le L_p\le T_p$). By Fatou's lemma the right-hand side in Lemma \ref{lem:cv Lp by generation} is less than the liminf of the left-hand side. Therefore it suffices to prove that the right-hand side in the last display converges towards $\phi_{\alpha}(\theta)$. This will be shown using Theorem \ref{th:rw} and the approximation
\begin{equation}\label{eq:approx}
\mujs(0)\frac1{L_p}\sum_{i=1}^{T_p} \m({\frac{X_i+1}p}^\theta\ \approx \,\
U\0\theta_p := \frac1{T_{p}}\sum_{i=1}^{T_p} \m({\frac{X_i+1}p}^\theta.
\end{equation}
Indeed since $T_p \geq p$, by Theorem \ref{th:rw} we can write
$$ \mathbb{E}[U_{p}^{(\theta )}] = \mathbb{E}\left[ \frac{1}{T_{p}-1} \sum_{i=1}^{T_{p}} \left(	 \frac{X_{i}+1}{p} \right)^{\theta}\right]\cdot (1+O(p^{-1})) =  \exptm{ \mB({\frac{X_1+1}{p}}^\theta \frac{p}{p+X_1} }\cdot (1+O(p^{-1})).$$
Recall that $ p^\alpha\,\prob(X_1\ge xp) \to \pi([x,\infty))$ where $\pi(\dd x) = C x^{-\alpha-1} /\Gamma(-\alpha)\idd{x>0}$ is the L\'evy measure of the $\alpha$-stable L\'evy process $\zeta$. As in the proof of Proposition \ref{prop:levy}, we can use dominate convergence to get that 
$$ p^\alpha\cdot \expt[U\0\theta_p]\ \xrightarrow[p\to\infty]{} \
\int \frac{x^\theta}{1+x} \pi(\dd x)\ =\ 
\expt\mh[{\frac1\tau\sum_{t\le\tau} (\Delta\zeta_t)^\theta}.$$
We have already seen in the proof of Theorem \ref{prop:1ere gen} that 
$p^\alpha \cdot \expt[\mujs(0)/L_p] \to \expt[1/\tau]$.
Provided that the approximation \eqref{eq:approx} holds in expectation with an error of order $o(p^{-\alpha})$, we can gather the pieces and indeed deduce the desired convergence for $k=1$ by the calculation done in Section \ref{sec:calculstable}. 

To justify \eqref{eq:approx}, for our fixed $\theta\in(\alpha,\alpha+1)$, let $\gamma,\gamma'>1$ be such that $\gamma\theta<\alpha+1$ and $\frac1\gamma+\frac1{\gamma'} =1$, then
\begin{align*}
	\abs{ \expt\mh[{ \mujs(0)\cdot\frac{T_p}{L_p} U_p\0\theta }
		- \expt\mB[{U_p\0\theta}	}
&\le \exptm{ \mB|{\mujs(0)\frac{T_p}{L_p}-1} \cdot U_p\0\theta}
&&\\	&\le 
	\exptm{ \mB|{ \mujs(0)\frac{T_p}{L_p} -1 }^{\gamma'}
		  }^{ \frac{1}{\gamma'} }
\!\!\!\cdot \exptm{ (U_p\0\theta)^\gamma }^{ \frac1\gamma }
&&\text{(H\"older's inequality)}\\	&\le 
	\exptm{ \mB|{ \mujs(0)\frac{T_p}{L_p} -1 }^{\gamma'}
		  }^{ \frac1{\gamma'} }
\!\!\!\cdot	\exptm{ U_p\0{\gamma\theta} }^{ \frac1\gamma }
&&\text{(Jensen's inequality)}
\end{align*}
The large deviation bound \eqref{eq:bound 2} and \eqref{eq:bound 3} imply that $\exptm{ \abs{\mujs(0)T_p/L_p -1}^{\gamma'}}$ tends to $0$ faster than $p^{-\alpha \gamma'/4}$ as $p \to \infty$. On the other hand, the above calculation shows that $ \mathbb{E}[U_{p}^{(\gamma \theta)}]$ is of order $p^{-\alpha}$. Hence the right-hand side of the last display is of order $o(p^{-\alpha/4 - \alpha/\gamma})$. By choosing $\gamma$ close enough to 1 this can be made smaller than $p^{-\alpha}$. This justifies our approximation \eqref{eq:approx}.

Now assume that the convergence of the lemma takes place up to the $k$-th generation and write $$
G_{k}(p) = \mathbb{E}\left[ \sum_{|u|=k} \left( p^{-1} \chi^{(p)}(u)\right)^{\theta} \right],$$ to simplify notation. 
Then there exists a constant $C = C(k,\theta)$ such that $G_{k}(p) \leq C$ for all $p \geq 1$, and for any $\varepsilon>0$ there exists $p_0$ such that for all $p\ge p_0$,
$$	G_{k}(p)
\le \phi_\alpha(\theta)^k+\varepsilon.	$$
Using the Markov property of the gasket decomposition at the first generation we get with the above two inequalities, for all $p\ge 1$,
\begin{align*}
	 G_{k+1}(p)
&=	 \exptm{
			\sum_{i=1}^\infty
		 	\m({  \frac{\chi\0p(i)}p }^\theta G_{k}{(\chi^{(p)}(i))}}
\\&\le	\m({		\phi_\alpha(\theta)^k + \varepsilon		}
		\exptm{
			\sum_{i=1}^\infty
			\m({ \frac{\chi\0p(i)}p }^\theta \idd{\chi\0p(i)\ge p_0}	 	
			} 
		+ C \exptm{
			\sum_{i=1}^\infty 
			\m({ \frac{\chi\0p(i)}p }^\theta \idd{\chi\0p(i)< p_0}}
\end{align*}
By the $k=1$ case, the first term on the right-hand side is bounded using 
\[
 \exptm{
			\sum_{i=1}^\infty
			\m({ \frac{\chi\0p(i)}p }^\theta\idd{\chi\0p(i)\ge p_0}	 	 	
			}
			\le
			 \exptm{
			\sum_{i=1}^\infty
			\m({ \frac{\chi\0p(i)}p }^\theta	 	
			}
			\quad \cvg{}{p\to\infty} \quad \phi_\alpha(\theta).
\]
As for the second term, fix $\theta'\in(\alpha,\theta)$ then for $p \geq p_{0}$ we can write
\[
 \exptm{
			\sum_{i=1}^\infty 
			\m({ 
			\frac{\chi\0p(i)}p }^\theta \idd{\chi\0p(i)< p_0}
			}
			\le 
			\m({\frac {p_0}p}^{\theta-\theta'}
			\exptm{
			\sum_{i=1}^\infty 
			\m({ \frac{\chi\0p(i)}p }^{\theta'}
			}
			\quad \cvg{}{p\to\infty} \quad 0,
\]
by the $k=1$ case proven above (with $\theta$ replaced by $\theta'$). Taking the limits $p\to\infty$ and then $\varepsilon\to0$, we get the upper bound
$$	\limsup_{p\to\infty} G_{k+1}(p)
\,\le\,	\phi_\alpha(\theta)^{k+1},	$$
whereas the lower bound $\liminf_{p\to\infty} G_{k+1}(p)
\,\ge\,	\phi_\alpha(\theta)^{k+1}$ is trivial from the finite-dimensional convergence together with Fatou's lemma.
\end{proof}
 
\begin{proof}[Proof of Proposition \ref{prop:convergence Uk}]
Since the identity function $\iota: \ell^\theta(\U_k)\to\ell^{\theta'}(\U_k)$ is continuous for all $\theta\le\theta'$, it suffices to prove the convergence in distribution for all $\theta$ close enough to $\alpha$.
Fix $\theta\in(\alpha,\alpha+1)$ and $k \geq 1$. Since $\exptm{\sum_{u\in \U_k}(Z_\alpha(u))^\theta} = 1 +\phi_\alpha(\theta) + \dots + \phi_\alpha(\theta)^k < \infty$, for any $\varepsilon>0$ there is a finite subset $V\subset \U_k$ such that
$$\exptm{ \sum_{u\in\U_k\setminus V} \m({Z_\alpha(u)}^\theta } <\varepsilon^{\theta}.$$
According to the convergence in Lemma \ref{lem:cv Lp by generation}, we have
\begin{equation}\label{eq:fidi to Uk}
	\limsup_{p \to \infty} \exptm{ \sum_{u\in\U_k\setminus V} \m({p^{-1}\chi\0p(u)}^\theta } \le \varepsilon^{\theta}.
\end{equation}
Now if $f : \ell^{\theta}( \mathcal{U}_{k}) \to \mathbb{R}_{+}$ is a bounded $K$-Lipschitz function we have 
$$ \mB|{ \exptm{f(Z_\alpha)} - \exptm{f(Z_\alpha\id_V)}} 
\ \le\ K \cdot \exptm{ \mB|{Z_\alpha- Z_\alpha\id_V}_\theta} 
\ \underset{ \text{H\"older}}\le\ K \cdot  \m({\exptm{ \sum_{u\in\U_k\setminus V} (Z_\alpha(u))^\theta }}^{1/\theta}\!\!\! \le\, K \cdot \varepsilon \,,$$ 
and similarly $\expt[f( p^{-1} \chi\0p)] \approx  \expt[f(p^{-1}\chi\0p\id_V)]$ up to an error of order $ \varepsilon$ as $p\to\infty$. From the finite-dimensional convergence (Proposition \ref{prop:fidi}) we deduce that $\exptm{f(p^{-1} \chi\0p \id_V)} \to \exptm{f(Z_\alpha \id_V)}$ as $p\to\infty$. Put all together this shows $ \exptm{f( p^{-1} \chi\0p)} \to \exptm{f(Z_\alpha)}$ as $p\to\infty$ and proves the desired convergence in distribution. \end{proof}

\subsection{$\ell^\infty$ convergence}
As we already notice, Proposition \ref{prop:convergence Uk} implies the convergence of $p^{-1}\chi\0p\longrightarrow Z_\alpha$ in $ \ell^{\infty} ( \mathcal{U}_{k})$. However, it does not yet yield the full convergence in $ \ell^{\infty}( \mathcal{U})$ and the missing estimate is of the form: for any $\varepsilon>0$, there exists an integer $k$ such that
\begin{equation}\label{eq:sous-marin proba bound}
\limsup_{p\to\infty} \proba\m({ \exists\, u\in \U\setminus\U_k \,:\, \chi\0p(u)\ge \varepsilon p} \le \varepsilon.
\end{equation}
In other words, the labels beyond generation $k$ are uniformly small when $k$ is large. Notice that if we had replaced $p^{-1}\chi\0p$ by the limiting cascade $Z_\alpha$, the estimate would be immediate: by the remark  after \eqref{avantrek}, the process $(Z_{\alpha})$ almost surely belongs to $\ell^{ \theta}( \mathcal{U})$ for a certain $\theta >0$. Our way to prove \eqref{eq:sous-marin proba bound} is similar as in the continuous case and we want to find a supermartingale of the form 
$$\left(\sum_{|u|=k} f\mb({\chi\0p(u)}\right)_{k\ge 0}$$ where $f$ is an increasing function.
The underlying quadrangulation model provides us naturally one such supermartingale:
for $p\ge 1$, let $\bV(p)$ be the expected volume (i.e.\ number of vertices) of a random loop-decorated quadrangulation of distribution $\prob\0p_{n;g,h}$.
Then the gasket decomposition (Section \ref{sec:gasket}) immediately shows that we have the strict inequality for all $p$:
\begin{equation}\label{eq:volume <}
 \exptm{ \sum_{i=1}^\infty \bV\m({\chi\0p(i)} } <\bV(p).
\end{equation}
In particular, $(\sum_{|u|=k} \bV(\chi\0p(u)),\,k\ge 0)$ is indeed a supermartingale for the discrete cascade.
Timothy Budd recently proved the following asymptotics of $\bV(p)$:
\begin{citetheorem}[\cite{BudOn}]\label{th:budd}
For each set of non-generic critical parameters $(n;g,h)$, we have $\bV(p) \underset{p\to\infty}\sim \Lambda p^{\theta_\alpha}$ where $\theta_\alpha = \max(2\alpha-1,2)$ and $\Lambda>0$ is some constant depending on $(n;g,h)$.
\end{citetheorem}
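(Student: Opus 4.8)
The natural starting point is the exact identity
$$ \bV(p) \;=\; p+1+\frac{(g\partial_g+h\partial_h)\,F_p(n;g,h)}{F_p(n;g,h)}, $$
which follows from Euler's formula: any loop-decorated quadrangulation in $\Op$ has exactly $|\mathfrak q|+p+1$ vertices, and since $(g\partial_g+h\partial_h)\mathsf{w}_{(n;g,h)}(\ql)=|\mathfrak q|\,\mathsf{w}_{(n;g,h)}(\ql)$ the radial logarithmic derivative of $F_p$ equals the expectation of $|\mathfrak q|$ under $\prob\0p_{n;g,h}$. Thus everything reduces to the first-order asymptotics of $(g\partial_g+h\partial_h)F_p$. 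One could try to extract this from the explicit solution of the $O(n)$ model on quadrangulations in \cite{BBG11} by singularity analysis of $\sum_p F_p x^p$ and its parameter derivatives; the catch is that differentiating in $(g,h)$ pushes the dominant singularity off the critical line, which is exactly the mechanism turning a naive linear growth into $p^{\theta_\alpha}$, and making this rigorous (with the constant) is delicate. I would instead follow Budd's peeling approach.

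The plan is to run a \emph{loop-compatible peeling exploration} of a map with law $\prob\0p_{n;g,h}$: at each step one reveals either a new quadrangle or an entire collar carrying an outermost loop, chosen by a Markovian peeling rule. By the spatial Markov property of the Boltzmann weights, conditionally on the half-perimeter $P_k$ of the unexplored region equalling $q$, the unexplored part has law $\prob\0q_{n;g,h}$; hence $(P_k)_{k\ge0}$ is a Markov chain absorbed at $0$ with transitions given by explicit ratios of the $F_q$'s, and the non-generic criticality \eqref{eq:F_p asymptotic} says precisely that $(P_k)$ is, up to the usual Doob $h$-transform forcing absorption, a random walk in the domain of attraction of the spectrally positive $\alpha$-stable process $\zeta$. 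Letting $v(q)$ be the expected number of vertices discovered in one peeling step from perimeter $q$, and $G_p(q)$ the expected number of visits of $(P_k)$ to $q$ before absorption, one gets
$$ \bV(p)\;=\;(p+1)+\sum_{q\ge1}G_p(q)\,v(q), $$
reducing the problem to the asymptotics of the two factors.

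Both can be made explicit. The increment mean $v(q)$ is again a ratio of partition functions and grows like a fixed power of $q$; the Green function $G_p$ is that of the $h$-transformed $\alpha$-stable-domain walk killed at its hitting time of $0$, and after renormalisation it converges to the corresponding Green function of $\zeta$ killed at $\tau$ (the object already used to define $\nu_\alpha$). Substituting these and splitting $\sum_q G_p(q)v(q)$ according to whether $q$ is macroscopic (of order $p$, controlled by the stable scaling limit) or small, the dominant contribution is of order $p^{\theta_\alpha}$, where $\theta_\alpha$ is the root of $\phi_\alpha(\theta)=1$ dictated by the requirement that the ``source'' term (the gasket volume, of order $p^{\alpha}$, with $\alpha<\theta_\alpha$) be negligible; this is how the value $\theta_\alpha=\max(2\alpha-1,2)$ appears, and $\Lambda$ comes out as the product of the relevant scaling constants of $G_p$, of $v$, and of the law of $\tau$.

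The main obstacle is the construction of the peeling process: ordinary peeling of the gasket destroys the loop nesting, so one needs a bespoke exploration that swallows whole collars at once while keeping $(P_k)$ Markovian with computable transitions — this is the technical heart of \cite{BudOn}. The rest is quantitative: one needs local and Green-function estimates for the heavy-tailed $h$-transformed walk sharp enough to yield not just the exponent $\theta_\alpha$ but the constant $\Lambda$, and the small-perimeter regime — where the scaling limit gives nothing — has to be handled by a separate finite computation.
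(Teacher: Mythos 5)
First, a point of framing: the paper does not prove this statement. It is cited from Budd \cite{BudOn} (hence the \texttt{citetheorem} environment) and is used as an external input, entering the paper only through the supermartingale bound in the $\ell^\infty(\U)$-tightness argument. There is therefore no in-paper proof to compare against, and a complete proof would have to reconstruct Budd's analysis from scratch.

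As an outline of that analysis your sketch has the right broad shape: the identity $\bV(p)=p+1+(g\partial_g+h\partial_h)F_p/F_p$ is correct (Euler's formula gives exactly $|\mathfrak{q}|+p+1$ vertices), and a loop-respecting peeling that swallows entire collars, encoded by a Markovian half-perimeter chain $(P_k)$, with $\bV(p)$ expressed through the Green function of that chain, is indeed the framework used by Budd. However, the key step, namely identifying the exponent $\theta_\alpha$, is not established by your argument. You assert that $\theta_\alpha$ is ``the root of $\phi_\alpha(\theta)=1$ dictated by the requirement that the gasket volume (of order $p^\alpha$) be negligible.'' This criterion cannot select between the two roots: both $2$ and $2\alpha-1$ lie in $(\alpha,\alpha+1)$ (the domain of $\phi_\alpha$), hence both strictly exceed $\alpha$, so the gasket contribution is subdominant for either choice. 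Pinning down the correct root is precisely the delicate content of the theorem, and it has to come from sharp local-limit and Green-function asymptotics for the $h$-transformed heavy-tailed walk (equivalently, from uniqueness of the minimal nonnegative solution of the renewal-type equation $\bV(p)=v_g(p)+\E[\sum_i \bV(\chi^{(p)}(i))]$), not from the soft negligibility of the gasket term. Your sketch leaves this entirely open, and also does not address how the \emph{constant} $\Lambda$ is extracted, which is a nontrivial part of the statement.

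Separately, you should flag an internal inconsistency: the displayed statement writes $\theta_\alpha=\max(2\alpha-1,2)$, whereas everywhere else in Section~\ref{sec:properties} the symbol $\theta_\alpha$ denotes the \emph{Malthusian} parameter $\min(2,2\alpha-1)$ of \eqref{avantrek}, i.e.\ the \emph{smaller} root, which is the one for which the Malthusian martingale $W_n(\alpha,\theta_\alpha)$ is uniformly integrable. The remark after Theorem~\ref{th:volume} explicitly identifies the exponent in Theorem~\ref{th:budd} with this Malthusian $\theta_\alpha$, and the dilute-phase heuristic (a Brownian disk of perimeter $p$ has volume of order $p^2$) also points to $\min$ rather than $\max$. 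A correct proof must explain why the smaller root is selected; both your sketch and the stated $\max$ are silent on this.
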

With this estimate, one can proceed to the proof of Theorem \ref{th:main}.

\begin{proof}[Proof of the $\ell^\infty(\U)$ convergence]
Recall that we assume $n<2$, so that $\alpha\ne \frac{3}{2}$ and $\inf\phi_\alpha <1$.
Choose $\theta$ such that $\phi_\alpha(\theta)<1$.
Then by Lemma \ref{lem:cv Lp by generation}, there exist finite constants $C$, $p_0$ and $c<1$ such that
\begin{equation}\label{eq:sous-marin induction init}
\exptm{\sum_{i=1}^\infty \mB({ p^{-1} \chi\0p(i) }^\theta } \le 
\begin{cases}	C	&\text{for all }p\ge 1
			\\	c	&\text{for all }p\ge p_0
\end{cases}
\end{equation}
This inequality indicates that the $\theta$-moment decreases exponentially as long as the labels do not drop below $p_0$ too often.
To make this idea precise, for $u \in \mathcal{U}$ let $N_0(u)$ be the number of ancestors of $u$ which have a label smaller than $p_0$.
The following lemma controls the size of $\chi\0p(u)$ depending on whether $N_0(u)$ is smaller or greater than a threshold $m$.
\begin{lemma}\label{lem:sous-marin 2 bounds}
\begin{enumerate}
\item	For all $k\ge m\ge 0$ we have
	\begin{equation}\label{eq:sous-marin moment bound}
	\exptm{ \sumu{=k} \m({ \frac{\chi\0p(u)}{p} }^\theta \idd{N_0(u)\le m}} 
	\le C^m c^{k-m}.
	\end{equation}
\item	Consider the set of vertices 
	$L_{m} = \inf\{u\in\U: N_0(u)\geq m\}$
	where the infimum of a subset $U\subset\U$ is defined by 
	$\inf U = \{u\in U: u\text{ has no ancestor in }U\}$.
	Then there exists $\tilde{c}<1$ such that for all $p$ and $m$,
	\begin{equation}\label{eq:sous-marin volume bound}
		\exptm{\sum_{u\in L_{m}} \bV\mb({\chi\0p(u)} }
		\le \tilde{c}^{m-1}\bV(p)
	\end{equation}
\end{enumerate}
\end{lemma}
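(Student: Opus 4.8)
The plan is to establish the two estimates independently; both exploit the branching structure of the gasket decomposition, namely that conditionally on the first $k$ generations of $\chi\0p$, the loop-decorated quadrangulations filling the generation-$k$ loops are independent, the one inside the loop coded by $u$ (with $\abs{u}=k$) being distributed as $\prob_{(n;g,h)}^{\chi\0p(u)}$.

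For \eqref{eq:sous-marin moment bound} I would introduce, for $q\ge1$ and $k,m\ge0$, the quantity $M_k(q,m):=\exptm{\sum_{\abs{u}=k}\big(q^{-1}\chi^{(q)}(u)\big)^\theta\idd{N_0(u)\le m}}$, so that the left-hand side to be bounded is $M_k(p,m)$. Writing a generation-$(k+1)$ vertex as $iu'$, where $i$ ranges over the first-generation loops and $u'$ over the generation-$k$ vertices of the sub-map attached to $i$, and noting that $N_0(iu')$ equals, up to the contribution $\idd{q<p_0}$ of the root $\varnothing$ of the outer map, the value of $N_0$ recomputed for $u'$ inside that sub-map, the Markov property yields the recursion
\[
  M_{k+1}(q,m)=\exptm{\sum_{i\ge1}\Big(\tfrac{\chi^{(q)}(i)}{q}\Big)^{\!\theta}M_k\big(\chi^{(q)}(i),\,m-\idd{q<p_0}\big)} .
\]
Iterating the crude bound $M_1(q)\le C$ from \eqref{eq:sous-marin induction init} first gives $M_k(q)\le C^k$ for all $q,k$, hence $M_k(q,m)\le C^m c^{k-m}$ whenever $m\ge k$ (recall $c<1\le C$). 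I would then prove $M_k(q,m)\le C^m c^{k-m}$ for all $q\ge1$ and $0\le m\le k$ by induction on $k$: feeding the induction hypothesis into the recursion, the inner term is at most $C^m c^{k-m}$ if $q\ge p_0$ (no decrement) and at most $C^{m-1}c^{k-m+1}$ if $q<p_0$ ($m$ drops by one); pulling the constant out of the expectation and invoking \eqref{eq:sous-marin induction init} once more — $M_1(q)\le c$ when $q\ge p_0$, $M_1(q)\le C$ always — produces the bound $C^m c^{k+1-m}$ in both cases. The one delicate point is this bookkeeping: a descent into a loop of half-perimeter $<p_0$ consumes one unit of the $N_0$-budget, and this is exactly what allows us to trade a factor $c$ for a factor $C$ and close the induction with a fixed constant rather than a growing one.

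For \eqref{eq:sous-marin volume bound} the situation is more subtle because the ratio in \eqref{eq:volume <} is not bounded away from $1$ uniformly in $p$ (in fact it tends to $1$ as $p\to\infty$, by Theorem~\ref{th:budd} together with $\phi_\alpha(\theta_\alpha)=1$), so one cannot iterate \eqref{eq:volume <} directly to get geometric decay. The key observation is that there are only finitely many integers $q$ with $1\le q<p_0$, and \eqref{eq:volume <} is strict for each of them, so that
\[
  \tilde c:=\max_{1\le q<p_0}\frac{\exptm{\sum_{i\ge1}\bV\big(\chi^{(q)}(i)\big)}}{\bV(q)}<1 .
\]
Since $\big(\sum_{\abs{u}=k}\bV(\chi\0p(u))\big)_{k\ge0}$ is a non-negative supermartingale and the loop tree of $\chi\0p$ is a.s.\ finite, a truncation-and-monotone-convergence argument gives $\exptm{\sum_{u\in\mathcal L}\bV(\chi\0p(u))}\le\bV(p)$ for every stopping line $\mathcal L$ of the cascade; applied to $L_1$ this settles $m=1$. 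For the inductive step I would condition on the part of the map lying outside the loops of $L_m$: each $w\in L_m$ satisfies $\chi\0p(w)<p_0$, and inside each sub-map attached to a child of $w$ the relevant portion of $L_{m+1}$ is again a ``first vertex with $N_0\ge1$'' stopping line; applying the stopping-line inequality inside these sub-maps and then the definition of $\tilde c$ at the (small!) vertices $w$ yields $\exptm{\sum_{u\in L_{m+1}}\bV(\chi\0p(u))}\le\tilde c\,\exptm{\sum_{w\in L_m}\bV(\chi\0p(w))}$, and iterating gives $\exptm{\sum_{u\in L_m}\bV(\chi\0p(u))}\le\tilde c^{\,m-1}\bV(p)$.

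I expect the genuine obstacle to be conceptual and to lie in Part (2): recognising that \eqref{eq:volume <} carries no uniform spectral gap, so that one must harvest the uniform contraction factor $\tilde c$ solely at the finitely many small perimeters, while the passages through large perimeters are transported ``for free'' by the supermartingale via optional stopping at stopping lines. Part (1) is then essentially routine once the $N_0$-bookkeeping in the recursion is arranged so as to close the induction with the constant $C$ of \eqref{eq:sous-marin induction init}.
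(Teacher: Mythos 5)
Your proposal is correct and follows essentially the same route as the paper: for part (1) you set up the identical recursion $M_{k+1}(q,m)=\E\bigl[\sum_i (\chi^{(q)}(i)/q)^\theta M_k(\chi^{(q)}(i),m-\Ind_{q<p_0})\bigr]$ and close the induction using \eqref{eq:sous-marin induction init} exactly as the paper does, distinguishing $q\ge p_0$ from $q<p_0$; and for part (2) you extract the same uniform contraction $\tilde c<1$ over the finitely many perimeters below $p_0$ and combine it with the supermartingale/optional-line inequality applied inside the sub-maps, which is precisely the paper's argument via the intermediate optional lines $L_k\prec L_k^+\preceq L_{k+1}$. The only cosmetic difference is that you justify the optional-line inequality by a truncation-and-monotone-convergence argument where the paper simply cites the general theory of optional lines.
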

\noindent By Markov's inequality, \eqref{eq:sous-marin moment bound} and \eqref{eq:sous-marin volume bound} imply respectively
\begin{align*}
&&	\prob\m({\exists\,u\in\U\setminus\U_k: N_0(u)\le m
							\text{ and } \chi\0p(u)\ge\varepsilon p}
&\le \varepsilon^{-\theta}\sum_{l>k} C^m c^{l-m}
=	\varepsilon^{-\theta} \mB({\frac{C}{c}}^m \frac{c^{k+1}}{1-c}
\\	&\text{and}&
	\prob\m({\exists\,u\in\U: N_0(p)>m
				\text{ and } \chi\0p(u)\ge\varepsilon p  }
&\le \tilde{c}^m \frac{\bV(p)}{\bV(\varepsilon p)}
\end{align*}
Take the sum of the two inequalities and use the regular variation of $\bV$ to show that
$$ \limsup_{p\to\infty} \proba\m({ \exists\, u\in \U\setminus\U_k \,:\, \chi\0p(u)\ge \varepsilon p} 
\le \varepsilon^{-\theta} \mB({\frac{C}{c}}^m \frac{c^{k+1}}{1-c}
	+ \tilde{c}^m \varepsilon^{-\theta_\alpha}.$$
The right-hand side tends to zero when $k,m\to\infty$ under the constraint $\frac{k}{m}\ge \frac{\log C - \log c}{-\log c} +\varepsilon$.
This proves the bound \eqref{eq:sous-marin proba bound} and the $\ell^\infty(\U)$ convergence in Theorem \ref{th:main} modulo Lemma \ref{lem:sous-marin 2 bounds}.
\end{proof}

\begin{proof}[Proof of Lemma \ref{lem:sous-marin 2 bounds}]
We prove the bound \eqref{eq:sous-marin moment bound} by induction on $k$. We write 
$$ M_{k,m}(p)=\exptm{ \sumu{=k} \m({ \frac{\chi\0p(u)}{p} }^\theta \idd{N_0(u)\le m}}$$  to simplify notation. In the case $k=1$ (and $m \in \{0,1\}$) the only ancestor of the first generation is the root and the estimate follows from \eqref{eq:sous-marin induction init}. If $k \geq 1$ then we distinguish according to:

\begin{itemize}
\item If $p\ge p_0$, then for all $i\ge 1$ and $u\in\U$, we have $N_0(iu) = N_0\0i(u)$, where $N_0\0i$ is a copy of the function $N_0$ defined on the sub-tree rooted at the vertex $i$.
It follows that
\begin{align*}
	M_{k+1,m}(p) 
&=	\exptm{ \sum_{i=1}^\infty \m({ \frac{\chi\0p(i)}p }^\theta
		 	 \sumu{=k} \m({ \frac{\chi\0p(iu)}{\chi\0p(i)} }^\theta 
		 	 \idd{N\0i_0(u)\le m}	}
\\&=\exptm{ \sum_{i=1}^\infty \m({ \frac{\chi\0p(i)}p }^\theta
			M_{k,m}\mb({\chi\0p(i)}	}
			\tag{Markov property of the cascade}
\\&\le	C^m c^{k-m}\ 
	\exptm{\sum_{i=1}^\infty \mB({ p^{-1} \chi\0p(i) }^\theta }
 			\tag{induction hypothesis}
\\& \le	C^m c^{k+1-m}.
\end{align*}	
\item If $p<p_0$, then we have $N_0(iu) = N\0i_0(u) + 1$ and hence for $m\ge 1$,
\begin{align*}
	M_{k+1,m}(p)
&= \exptm{ \sum_{i=1}^\infty \m({ \frac{\chi\0p(i)}p }^\theta
			M_{k,m-1}\mb({\chi\0p(i)}	}
\\&\le	C^{m-1} c^{k-(m-1)}\ 
	\exptm{\sum_{i=1}^\infty \mB({ p^{-1} \chi\0p(i) }^\theta } 
\ \le	C^m c^{k+1-m}.
\end{align*}
For $m=0$, we have $M_{k+1,0}(p)=0$ since $p<p_0$. This completes the induction.
\end{itemize}

Let us move to the second point of the lemma. To show \eqref{eq:sous-marin volume bound}, first remark that \eqref{eq:volume <} implies the existence of a constant $\tilde{c}<1$ such that
\begin{equation}\label{eq:volume <=}
\exptm{ \sum_{i=1}^\infty \bV\m({\chi\0p(i)} } \le \tilde{c}\, \bV(p)
\end{equation}
for all $p\le p_0$.
To simplify notation, we will write
$\hat{V}(U) = \exptm{\sum_{u\in U} \bV(\chi\0p(u))}$ for any subset $U\subset \U$.

For $k\ge 1$, let $L_k = \inf\{ u\in\U : N_0(u)\ge k \}$ and $L_k^+ = \{ui : u\in L_k,\, i\in\natural^*\}$ ($L_k^+$ is the set of children of $L_k$).
From the definition of $N_0(.)$ it is not hard to see that $\chi\0p(u) \le p_0$ for all $u\in L_k$. The random sets $L_k$, $L_k^+$ are  so-called \emph{optional lines} for the filtration generated by the process $\chi\0p$ (see e.g.~\cite{BK04}) and we have 
$$ \{\varnothing\}=L_{0}\prec L_{0}^{+}\preceq L_{1} \prec L_{1}^{+} \preceq L_{2} \prec L_{2}^{+} \preceq L_{3} \prec \cdots$$
where we used the partial order on the subsets of $\U$ defined by $U \preceq \tilde{U}$ if each vertex $u\in\tilde{U}$ either is in $U$ or has an ancestor in $U$. On the one hand, by general theory on optional lines, if $L\preceq L'$ are two optional lines then $ \hat{V}(L)\ge \hat{V}(L') $. On the other hand, since $\chi\0p(u) \le p_0$ for all $u\in L_k$ we can use \eqref{eq:volume <=} to deduce that 
$$ 	\hat{V}(L_k^+) 
=	\exptm{ \sum_{u\in L_k} \bV\m({ \chi\0p(u) } \cdot
			\exptm{ \sum_{i=1}^\infty \frac{\bV\m({\chi\0q(i)}}{\bV(q)}
			}_{q=\chi\0p(u)} }
\le \tilde{c}\, \hat{V}(L_k).	$$
Gathering the two inequalities we indeed deduce that $L_{m+1} \leq \tilde{c}^m\hat{V}(L_{0}) = \tilde{c}^m \bV(p),$ as desired.\end{proof}

\appendix
\section{Relation with other nesting statistics}
\label{sec:KPZ}

In this section, we outline the relation of our work to the recent work by Borot, Bouttier and Duplantier \cite{BBD16} about the number of loops surrounding a typical vertex in a $O(n)$-decorated random planar map and to analogous quantities in conformal loop ensembles. 

\paragraph{Number of loops surrounding a typical vertex in the $O(n)$-decorated quadrangulation.}
We consider a random \emph{pointed} quadrangulation of (large) perimeter $p$ decorated with an $O(n)$ loop model, as defined in the introduction of the main text. Borot, Bouttier and Duplantier \cite{BBD16} have studied the large deviations of the number of loops surrounding the marked vertex, by methods from analytic combinatorics. With our notation, their result reads as follows:
\begin{citetheorem}[\cite{BBD16}, Theorem~2.2]
\label{th:bbd}
Let $N$ denote the number of loops surrounding the marked vertex. Then, for all $x>0$, as $p\to\infty$,
\begin{align*}
&\frac{1}{\log p}\log \P(N = \lfloor x \log p \rfloor) \longrightarrow -\frac 1 \pi J(\pi x),\\
 \text{where}\quad &J(x) = x \log\left(\frac 2 n \frac{x}{\sqrt{1+ x^2}}\right) + \arccot(x) - \arccos(\tfrac n 2).
\end{align*}
\end{citetheorem}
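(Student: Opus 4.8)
The plan is to recover the rate function of Theorem~\ref{th:bbd} from the scaling limit of the perimeter cascade (Theorem~\ref{th:main}), the explicit Biggins transform $\phi_\alpha$, and its Legendre--Fenchel transform \eqref{eq:log-biggins_transform}, via the spinal (size‑biased) decomposition attached to the Malthusian martingale $W_n(\alpha,\theta_\alpha)$. The geometric input is that, if $v$ is the marked vertex, then $N$ is the depth in the nesting tree of the \emph{gasket} containing $v$: a vertex of the level‑$k$ gasket is surrounded by the loops of generations $1,\dots,k$ and by no loop of generation $k+1$. Since $v$ is uniform, $\P(N=k)=\E[V_{=k}(p)/V(p)]$, where $V(p)$ is the total volume and $V_{=k}(p)=\sum_{|u|=k}V_{\mathrm{gasket}}\big(\chi\0p(u)\big)$ is the number of vertices lying in a region surrounded by exactly $k$ loops. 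By Budd's Theorem~\ref{th:budd} we have $V(p)\asymp\bV(p)\asymp\Lambda p^{\theta_\alpha}$, while the gasket inside a loop of half‑perimeter $q$ has volume of the strictly smaller polynomial order $q^{\alpha}$; so, conditionally on the perimeter cascade, the marked vertex performs a \emph{spine}: from a loop $u$ of half‑perimeter $q$ it descends into the child $ui$ with probability proportional to $(\chi\0p(ui))^{\theta_\alpha}$ and stops (meaning $N=|u|$) with probability of order $q^{\alpha-\theta_\alpha}$. Because $\phi_\alpha(\theta_\alpha)=1$, the descent law is a genuine probability measure and needs no normalisation.

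First I would use this to reduce the statement to a one‑dimensional large deviation. Set $n=\lfloor x\log p\rfloor$, let $w$ denote the spine and $\hat S_k=\log\big(\chi\0p(w_k)/p\big)$; under the size‑biased measure $\hat\P$ the process $\hat S$ is a random walk whose i.i.d.\ increments have exponential moments
\[
\hat\E\big[e^{t\hat S_1}\big]=\int\Big(\sum_{i\ge1}(x_i)^{\theta_\alpha+t}\Big)\,\dd\nu_\alpha(\mathbf{x})=\phi_\alpha(\theta_\alpha+t),\qquad \theta_\alpha+t\in(\alpha,\alpha+1),
\]
and in particular negative drift $(\log\phi_\alpha)'(\theta_\alpha)<0$. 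Concretely, at the level of the discrete model one obtains this increment law directly from the random‑walk coding of Section~\ref{sec:firstgene}: up to the by now familiar $T_p\approx L_p\approx p$ corrections, each spine step is a large jump of the $\mujs$‑walk run until its hitting time, re‑weighted by the $\theta_\alpha$‑th power of its size, and Theorem~\ref{th:rw} together with Lemma~\ref{lem:large deviation} supplies the necessary control. Plugging $V_{\mathrm{gasket}}(q)\asymp q^{\alpha}$ into the many‑to‑one formula turns $\E[V_{=n}(p)]$ into (a discrete regularisation of) $p^{\alpha}\,\hat\E\big[e^{(\alpha-\theta_\alpha)\hat S_n}\big]$; since $\hat S_n$ stays above $-\log p$ and $\alpha-\theta_\alpha<0$, an elementary optimisation (monotone in the perimeter scale) shows that the dominant contribution comes from $\chi\0p(u)=O(1)$, i.e.\ from $\hat S_n\approx-\log p$ (empirical drift $-1/x$), giving $\E[V_{=n}(p)]\approx p^{\theta_\alpha}\hat\P(\hat S_n\approx-\log p)$ and hence $\P(N=n)\approx\hat\P(\hat S_n\approx-\log p)$.

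It then remains to compute the rate. By Cramér's theorem (and a standard argument to pass from the probability of a window to that of a single point),
\[
-\lim_{p\to\infty}\tfrac1{\log p}\log\P\big(N=\lfloor x\log p\rfloor\big)=x\,\sup_{t}\Big(-\tfrac tx-\log\phi_\alpha(\theta_\alpha+t)\Big)=x\,(\log\phi_\alpha)^*(-1/x)+\theta_\alpha,
\]
the last equality being the substitution $\theta=\theta_\alpha+t$ inside the supremum. Inserting the closed form \eqref{eq:log-biggins_transform} and using the elementary identities valid in both the dilute and the dense case, $\sin(\pi(2-\alpha))=n/2$ and $\theta_\alpha-\alpha=\tfrac12-\tfrac1\pi\arccos(n/2)$, together with $\arccot(u)+\arccot(1/u)=\pi/2$ for $u>0$, the right‑hand side collapses after a short computation to $\tfrac1\pi J(\pi x)$, which is precisely the rate function of Theorem~\ref{th:bbd}. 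As a check, $J$ vanishes at $\pi x=\cot\big(\arccos(n/2)\big)$, matching $\lvert(\log\phi_\alpha)'(\theta_\alpha)\rvert^{-1}$ and hence the law‑of‑large‑numbers value $N\sim\log p\,/\,\lvert(\log\phi_\alpha)'(\theta_\alpha)\rvert$ of the spine walk; and as $x\to0$ the rate tends to $\theta_\alpha-\alpha$, consistent with the exponential left tail of the spine increment.

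The hard part is making the first reduction rigorous while $n$ and $p$ go to infinity simultaneously; for this reason we are content to present the result as a consequence of \cite{BBD16}. One must (i) replace $V(p)$ by $\bV(p)$ and $V_{=k}(p)$ by its mean, controlling the fluctuations via the supermartingale $\big(\sum_{|u|=k}\bV(\chi\0p(u))\big)_{k\ge0}$ of Section~\ref{sec:cv L infty}, Theorem~\ref{th:budd}, and concentration of the interior volumes given the perimeter cascade; (ii) handle the gasket‑volume exponent $\alpha$, which sits exactly at the endpoint of $(\alpha,\alpha+1)$ where $\phi_\alpha=+\infty$ and which produces logarithmic‑in‑$p$ corrections — invisible at the $\tfrac1{\log p}\log$ scale, but which must nonetheless be bounded; and (iii) transfer the approximation $\chi\0p(u)\approx pZ_\alpha(u)$ to the exponentially many generation‑$k$ vertices uniformly, for which the large‑deviation estimates of Lemma~\ref{lem:large deviation} and a second‑moment bound on the discrete $\theta_\alpha$‑cascade are the relevant tools. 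None of these points appears fundamental, but assembling them into a clean argument that reproduces $J$ exactly is genuinely delicate.
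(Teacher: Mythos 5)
Your derivation is correct as a heuristic and reaches the same rate function as the paper, but you are not literally retracing the paper's steps. The paper does not attempt to prove Theorem~\ref{th:bbd}; it explicitly defers the proof to \cite{BBD16} and offers only a heuristic reconstruction via the optional line $\mathcal L_\delta$ and the additive martingale $W^\delta(\alpha,\theta)$. Concretely, the paper derives $\E[\phi_\alpha(\theta)^{-N}]\approx p^{\theta-\theta_0}$, packages this as a moment generating function $\E[e^{\lambda N}]\approx p^{\kappa_\alpha(\lambda)}$ with $\kappa_\alpha(\lambda)=\phi_\alpha^{-1}(e^{-\lambda})-\theta_0$, and then takes the Legendre--Fenchel transform of $\kappa_\alpha$ directly. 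You instead work under the size-biased (spinal) measure, treat $\hat S_k=\log(\chi^{(p)}(w_k)/p)$ as a random walk with increment cumulant $\Lambda(t)=\log\phi_\alpha(\theta_\alpha+t)$, and apply Cram\'er's theorem. These are two faces of the same many-to-one duality: the substitution $\lambda=-\log\phi_\alpha(\theta)$ shows
\[
\kappa_\alpha^*(x)=\sup_\lambda\bigl(\lambda x-\kappa_\alpha(\lambda)\bigr)=x\,(\log\phi_\alpha)^*(-1/x)+\theta_0,
\]
which is exactly what your Cram\'er computation produces, and the subsequent algebra (using $\sin(\pi(2-\alpha))=n/2$, $\theta_\alpha-\alpha=\tfrac12-\tfrac1\pi\arccos(n/2)$ and the reflection identity for $\arccot$) collapses to $\tfrac1\pi J(\pi x)$ in both cases. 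What your route buys is slightly more probabilistic transparency (the empirical drift $-1/x$ of the spine, the sanity check at $x\to0$, and the law-of-large-numbers value at the zero of $J$), whereas the paper's route is shorter because it works directly with the generating function of $N$. You are also appropriately candid that points (i)--(iii) would require real work to make rigorous — that is consistent with the paper's own framing, which presents this only as a consequence of \cite{BBD16}. One cosmetic remark: you correctly use $\theta_\alpha=\min(2,2\alpha-1)$; note that the statement of Theorem~\ref{th:budd} as printed contains a $\max$ where $\min$ is meant, but the appendix of the paper and your proposal both implicitly correct this.
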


In fact, the result in \cite{BBD16} is more precise in that the authors actually give an asymptotic equivalent for $\P(N= \lfloor x \log p \rfloor)$. Also note that there is a mistake in the definition of the function $J$ in \cite{BBD16} (the first $x$ factor is missing). 

We now sketch how we can heuristically recover this result from the continuous multiplicative cascade $Z_{\alpha}$ of Theorem \ref{th:main}. 
Let $\delta>0$ be a small constant. We define $\mathcal L_\delta$ to be the set of those vertices $u$ in the Ulam tree for which $Z_\alpha(u) < \delta$ and $Z_\alpha(v) \ge \delta$ for every ancestor $v$ of $u$. ($\mathcal{L}_\delta$ is an optional line, see the proof of Lemma~\ref{lem:sous-marin 2 bounds}.) In the discrete setting, these vertices correspond to loops in a  $O(n)$-decorated quadrangulation of perimeter $p$ whose perimeter is smaller than $\delta p$, but the loops surrounding them have perimeter larger than $\delta p$. 

Similarly to the definition of the martingale $W_n(\alpha,\theta)$ in \eqref{eq:martingale} we now define
\[
 W^\delta(\alpha,\theta) = \sum_{u\in\mathcal L_\delta} Z_\alpha(u)^\theta\phi_\alpha(\theta)^{-|u|},
\]
where as usual, $|u|$ denotes the generation of a vertex $u$ in the Ulam tree.
One can then show (for example with the methods from \cite{BK04}) that $\E[W^\delta(\alpha,\theta)] = 1$ for every $\theta \in(\alpha, \operatorname{argmin}_\theta \phi_\alpha(\theta)) = (\alpha,\alpha + 1/2)$.

As a consequence, we have for such $\theta$,
\begin{equation*}
 1 = \E[W^\delta(\alpha,\theta)] = \E\left[\sum_{u\in\mathcal L}  Z_\alpha(u)^\theta\phi_\alpha(\theta)^{-|u|}\right] \approx \delta^{\theta} \E\left[\sum_{u\in\mathcal L_\delta} \phi_\alpha(\theta)^{-|u|}\right].
\end{equation*}
Now, as said before, every $u\in\mathcal L_\delta$ roughly corresponds to a loop in the $O(n)$ model of perimeter $\delta p$ and $|u|$ is then the number of loops surrounding it. Assuming we could take $ \delta = \frac{1}{p}$ this suggests that 
\begin{equation}
\label{eq:on}
 \E[\sum_v \phi_\alpha(\theta)^{-N(v)}] \approx p^{\theta},\quad \theta\in(\alpha,\alpha+1/2),
\end{equation}
where the sum is on the vertices of the loop-decorated quadrangulation and $N(v)$ is the number of loops separating the vertex $v$ from the outerface. 

We now write \eqref{eq:on} in a different form in order to link it to Theorem~\ref{th:bbd}. First recall from Section~\ref{sec:volume} (or Theorem~\ref{th:budd}) that the volume of the $O(n)$-decorated quandrangulation scales as $p^{\theta_0}$, where $\theta_0 = \min(2,2\alpha-1)$. Equation~\eqref{eq:on} is then equivalent to
\[
\E[\phi_\alpha(\theta)^{-N}] \approx p^{\theta - \theta_0},\quad \theta\in(\alpha,\alpha+1/2),
\]
where $N$, as in Theorem~\ref{th:bbd}, is now the number of loops surrounding the marked vertex in a \emph{pointed} $O(n)$-decorated quadrangulation. This allows to express the moment generating function of $N$ by
\begin{equation}
\E[e^{\lambda N}] \approx p^{\kappa_\alpha(\lambda)},
\label{eq:kappa0}
\end{equation}
where $\kappa_\alpha(\lambda) = \phi_\alpha^{-1}(e^{-\lambda}) - \theta_0$, with $\phi_\alpha^{-1}$ the inverse of the restriction of $\phi_\alpha$ to $(\alpha,\alpha+1/2)$ and $\lambda < -\log \min_\theta \phi_\alpha(\theta)$. Now, \eqref{eq:alpha_n} gives $\sin(\pi(2-\alpha)) = n/2$ and $\theta_0 - \alpha = \pi^{-1}\arcsin(n/2)$, so that we can express $\kappa_\alpha$ by
\begin{equation}
\kappa_\alpha(\lambda) = \frac{1}{\pi}\mB({\arcsin(\tfrac n 2 e^\lambda) - \arcsin(\tfrac n 2)} = \frac{1}{\pi}\mB({\arccos(\tfrac n 2) - \arccos(\tfrac n 2 e^\lambda)},\quad \lambda < \log(\tfrac 2 n),
\label{eq:kappa}
\end{equation} 
and $\kappa_\alpha(\lambda) = +\infty$ for $\lambda > \log(\tfrac 2 n)$, by convexity.

Equation~\eqref{eq:kappa0} now suggests that for every $x>0$, as $p\to\infty$,
\[
\frac 1 {\log p} \log \P(N = \lfloor x \log p\rfloor) \longrightarrow -\kappa_\alpha^*(x), 
\]
where $\kappa_\alpha^*(x) = \sup_{\lambda\in\R} \{\lambda x - \kappa_\alpha(x)\}$ is the Legendre--Fenchel transform of the function $\kappa_\alpha$. Using the explicit expression in \eqref{eq:kappa}, a simple calculation shows:
\[
\kappa_\alpha^*(x) = \frac 1 \pi J(\pi x),\quad x>0,
\]
where $J$ is the function from Theorem~\ref{th:bbd}.
This establishes (again, heuristically) that theorem.

\paragraph{Number of loops in a conformal loop ensemble surrounding a small Euclidean ball.}
We now show how one can heuristically relate \eqref{eq:on} to a similar statement for the number of loops in a conformal loop ensemble in the unit disk surrounding a small Euclidean ball, thereby recovering (again heuristically) a result by Miller, Watson and Wilson \cite{MillerWatsonWilson}. The argument is similar to the one by Borot, Bouttier and Duplantier \cite{BBD16}, but may be easier to understand since we avoid here the use of Legendre--Fenchel transforms. Recall from the introduction that it is conjectured that in a $O(n)$-decorated quadrangulation with a boundary, the volume measure together with the loops converges in some sense to the so-called Liouville quantum disk (with parameter $\gamma = \sqrt{\kappa}$) together with an independent $\mathrm{CLE}_\kappa$ in the disk, where $\kappa$ is related to our parameter $\alpha$ by $\alpha - \frac 3 2 = 4/\kappa - 1$ (see \eqref{eq:alpha_kappa}). For simplicity, we restrict ourselves to the dilute case ($\alpha > 3/2$, or $8/3 < \kappa < 4$). The result from \cite{MillerWatsonWilson} is the following: Let $\widetilde N_r$ denote the number of $\mathrm{CLE}_\kappa$ loops surrounding a fixed Euclidean ball of radius $r \ll 1$. Then \cite{MillerWatsonWilson},
\begin{equation}
\label{eq:fernanda_martins}
 \E[\psi_\kappa(\theta)^{-\widetilde N_r}] \approx r^{-\theta},
\end{equation}
where 
 \begin{equation}
  \label{eq:ssw}
  \psi_\kappa(\theta) = \frac{-\cos(4\pi/\kappa)}{\cos(\pi\sqrt{(1-4/\kappa)^2-8\theta/\kappa})} = \frac{\cos(\pi(\alpha-3/2))}{\cos(\pi \sqrt{(\alpha-3/2)^2-\theta(2\alpha-1)})}.
 \end{equation}
This function appeared already in \cite{SSW}. Note that we can express it as
 \begin{equation}
 \label{eq:apokalypsa}
  \psi_\kappa(\theta) = \phi_\alpha\left(1 + \frac 4 \kappa - \sqrt{(1-4/\kappa)^2 - 8\theta/\kappa}\right),
 \end{equation}
 where $\phi_\alpha$ is the Biggins transform of the multiplicative cascade $Z_\alpha$.

Here is an explanation for the relation \eqref{eq:apokalypsa}: 
Denote by $\mu_\kappa$ the Liouville quantum gravity measure in the disk. We can then discretize the disk into blocks of $\mu_\kappa$-mass approximately $\delta^2$, for example by a dyadic decomposition as in \cite{DS}. Such a block is then the analogue of a vertex of the $O(n)$-decorated quadrangulation of perimeter $p$, with $\delta = 1/p$ (recall that in the dilute phase, the volume scales like the perimeter squared). 

For each $c>0$, denote by $\nu_{\delta,\delta^{1/c}}$ the number of blocks of diameter approximately $\delta^{1/c}$. It is implicit in \cite{DS} that
\begin{equation}
\label{eq:sheefit}
 \nu_{\delta,\delta^{1/c}} \approx \delta^{-2 + \frac 2 {c\kappa}(c-(1-\kappa/4))^2}.
\end{equation}
For each block $b$, denote by $\widetilde N(b)$ the number of CLE loops surrounding the block. 
Equations \eqref{eq:fernanda_martins} and \eqref{eq:sheefit} suggest that
\begin{align*}
 \E[\sum_b \psi_\kappa(\theta)^{-\widetilde N(b)}] 
 &\approx \sup_{c>0} \nu_{\delta,\delta^{1/2c}} \times \E[\psi_\kappa(\theta)^{-\widetilde N_{\delta^{1/c}}}]\\
 &\approx  \sup_{c>0} \delta^{-2 + \frac 2 {c\kappa}(c-(1-\kappa/4))^2 - \frac{\theta}{c}}.
\end{align*}
A simple calculation shows that
\[
\inf_{c>0} \frac 2 {c\kappa}(c-(1-\kappa/4))^2 - \frac{\theta}{c} = 1 -\frac 4 \kappa + \sqrt{(1-4/\kappa)^2 - 8\theta/\kappa},
\]
for $\theta$ small enough. This gives
\begin{equation}
 \label{eq:off}
 \E[\sum_b \psi_\kappa(\theta)^{-\widetilde N(b)}] \approx \delta^{-1-\frac 4 \kappa + \sqrt{(1-4/\kappa)^2 - 8\theta/\kappa}}.
\end{equation}
On the other hand, by \eqref{eq:on} we expect that
\begin{equation}
 \label{eq:on_off}
 \E[\sum_b \phi_\kappa(\widetilde \theta)^{-\widetilde N(b)}] \approx \delta^{-\widetilde \theta},
\end{equation}
for $\widetilde \theta \in(\alpha,\alpha+1/2)$. Comparing \eqref{eq:off} and \eqref{eq:on_off} suggests that $\psi_\kappa(\theta) = \phi_\alpha(\widetilde \theta)$ if $\theta$ and $\widetilde\theta$ are related through 
$$\widetilde \theta = 1+\frac 4 \kappa - \sqrt{(1-4/\kappa)^2 - 8\theta/\kappa}.$$ This readily implies \eqref{eq:apokalypsa}.

\bibliographystyle{abbrv}
\bibliography{merged}
\Addresses

\end{document}